\providecommand{\U}[1]{\protect \rule{.1in}{.1in}}
\newtheorem{theorem}{Theorem}[section]
\newtheorem{corollary}[theorem]{Corollary}
\newtheorem{definition}[theorem]{Definition}
\newtheorem{example}[theorem]{Example}
\newtheorem{lemma}[theorem]{Lemma}
\newtheorem{proposition}[theorem]{Proposition}
\newtheorem{remark}[theorem]{Remark}
\newenvironment{proof}[1][Proof]{\noindent \textbf{#1.} }{\  \rule{0.5em}{0.5em}}
\begin{document}

\title{G-Gaussian Processes under Sublinear Expectations and $q$-Brownian Motion in
Quantum Mechanics}
\author{Shige Peng\\School of Mathematics, Shandong University, China}
\date{May 05, 2011}
\maketitle

\abstract{We provide a general approach to construct a stochastic
process with a given consistent family of  finite dimensional
distributions under a nonlinear expectation space. We use this
approach to construct a generalized Gaussian process under a
sublinear expectation and a $q$-Brownian motion, under a
complex-valued linear expectation, with which a new type of
Feynman-Kac formula can be derived to represent the solution of a
Schr\"odinger equation.}

\section{Introduction}

In this paper we are concerned with constructions of some typical stochastic
processes under a generalized notion of expectation. We have already
introduced, in [Peng2004-2010], $G$-Markovian processes, $G$-Brownian motions
and the related stochastic calculus under nonlinear expectation space (see
also [Hu-Peng2010] for $G$-L\'{e}vy processes). The main idea is to directly
use a nonlinear (sublinear) expectation to valuate the random variables
depending on the paths of the corresponding stochastic process. For example, a
generalized Brownian motion $(B_{t})_{t\geq0}$, called $G$-Brownian motion, is
merely a process with continuous paths under a specifically designed nonlinear
expectation so that each increments of $(B_{t})_{t\geq0}$ is stable and
independent of its historical path. It was proved that each of its increment
is $G$-normally distributed random variables. On the other hand, such type of
$G$-normal distributions can be also obtained as the limit in law of a sum of
an i.i.d. sequence of random variables.

In this paper we will construct two types of stochastic processes. The first
one is a Gaussian process under a sublinear expectation. All finite
dimensional distributions of this process are $G$-normally distributed. When
we deduce to a classical framework the corresponding expectation is linear and
thus the $G$-normal distributions become the classical normal distributions.
In this case this process is noting but a classical Gaussian. Quite different
from a classical situation, in general, a $G$-Brownian motion $(B_{t}%
)_{t\geq0}$ is not a Gaussian process although each increment of a
$G$-Brownian motion is normally distributed.

The second type of stochastic process is constructed under a complex-valued
linear expectation in the place of a real valued expectation. Under such
$\mathbb{C}$-valued expectation we can define a new type of random variable
which is $q$-normally distribution and then a $q$-Brownian motion, where $q$
stands for ``quantum''. Under this framework the solution of the well-known
Schr\"odinger equation can be expressed by a new formula of Feynman-Kac type.
We then get a clear path picture via this $q$-Brownian motion.

The paper is organized as follows. In section 2 we present some preliminary
notations and results of a general framework of nonlinear expectations and
nonlinear expectation spaces, including nonlinear distribution, independence,
$G$-normal distribution. In Section 3 we give a general construction of a
stochastic process with a given consistent family of finite dimensional
distributions. It is in fact a generalization of the well-know arroach of
Kolmogorov's consistency under a nonlinear expectation. We also discuss how to
find the upper expectation of a family of probability measures for some given
sublinear expectation and under what condition we can obtain the continuity of
stochastic process. In Section 4 the construction of a $G$-Gaussian process
are introduced. In the last section we introduce a complex valued linear
expectation under which the $q$-normal random variable and the $q$-Brownian
motion are defined. We also provide an Appendix for the convenience to read
the paper.

\section{Preliminaries: sublinear expectation}

A sublinear expectation is also called an upper expectation. It is frequently
applied to situations when the probability models themselves have uncertainty.
In this section, we present the basic notion of sublinear expectations and the
corresponding sublinear expectation spaces.

\subsection{{Nonlinear expectation}}

Let $\Omega$ be a given set and let $\mathcal{H}$ \label{huah}be a linear
space of real valued functions defined on $\Omega$ containing constants. In
this paper, we often suppose that $|X|\in \mathcal{H}$ if $X\in \mathcal{H}$.
The space $\mathcal{H}$ is also called the space of random variables.

\begin{definition}
{\label{Def-1 copy(1)} {A \textbf{Sublinear expectation }\label{sube}
\index{Sublinear expectation}%
$\mathbb{E}$ is a functional $\mathbb{{E} }:\mathcal{H}\rightarrow \mathbb{R}$
satisfying }}

\noindent{\textbf{\textup{(i)} Monotonicity: }%
\[
\mathbb{E}[X]\geq \mathbb{E}[Y] \  \  \text{if}\ X\geq Y.
\]
}

\noindent \textbf{\textup{(ii)} Constant preserving:}
\[
\mathbb{E}[c]=c\  \  \  \text{for}\ c\in \mathbb{R}.
\]

\noindent \textbf{\textup{(iii)} Sub-additivity: } For each $X,Y\in
{\mathcal{H}}$,
\[
\mathbb{E}[X+Y]\leq \mathbb{E}[X]+\mathbb{E}[Y].
\]

\noindent \textbf{\textup{(iv)} Positive homogeneity:}
\[
\mathbb{E}[\lambda X]=\lambda \mathbb{E}[X]\  \  \  \text{for}\  \lambda
\geq0\text{.}%
\]

The triple $(\Omega,\mathcal{H},\mathbb{E}\mathbb{)}$ is called a
\textbf{sublinear expectation space}. If only \textup{(i)} and \textup{(ii)}
are satisfied, {{$\mathbb{E}$ is called a }}\textbf{nonlinear expectation} and
the triple $(\Omega,\mathcal{H},\mathbb{E}\mathbb{)}$ is called a
\textbf{nonlinear expectation space}.
\end{definition}

\begin{remark}
In more general situation the above {{$\mathbb{{E}}$ may be {$\mathbb{R}^{k}%
$-valued, namely, the functions of $\mathcal{H}$ are $\mathbb{R}^{k}$-valued
and }$\mathbb{{E}}$}} maps {{$\mathcal{H}$}} to {{$\mathbb{R}^{k}$. For linear
situation we usually only assume $\mathbb{E}[c]=c$, for }}$c\in \mathbb{R}^{d}$
and $\mathbb{E}[\alpha X+\beta Y]=\alpha \mathbb{E}[X]+\beta \mathbb{E}[Y]$, for
$\alpha,\beta \in \mathbb{R}$ and $X$, $Y\in \mathcal{H}$. $\mathbb{E}$ is called
an $\mathbb{R}^{d}$-valued linear expectation, or simply $\mathbb{R}^{d}%
$-expectation. A type of $\mathbb{C}$-valued linear expectation will be
discussed in the Section 5.
\end{remark}

\begin{definition}
Let $\mathbb{E}_{1}$ and $\mathbb{E}_{2}$ be two nonlinear expectations
defined on $(\Omega,\mathcal{H})$. $\mathbb{E}_{1}$ is said to be
\textbf{dominated} by $\mathbb{E}_{2}$ if
\[
\mathbb{E}_{1}[X]-\mathbb{E}_{1}[Y]\leq \mathbb{E}_{2}[X-Y]\  \ ~\text{for}%
\ X,Y\in \mathcal{H}.
\]

\end{definition}

\begin{remark}
If the inequality in \textup{(iii)} becomes equality, then {{$\mathbb{E}$}} is
a linear expectation, i.e., {{$\mathbb{E}$}} is a linear functional satisfying
\textup{(i)} and \textup{(ii)}.
\end{remark}

\begin{remark}
\textup{(iii)+(iv)} is called \textbf{sublinearity}. This sublinearity implies

\noindent \textbf{\textup{(v)}} \textbf{Convexity}:
\[
\mathbb{E}[\alpha X+(1-\alpha)Y]\leq \alpha \mathbb{E}[X]+(1-\alpha
)\mathbb{E}[Y]\  \  \text{for}\  \alpha \in \lbrack0,1].
\]
If a nonlinear expectation $\mathbb{E}$ satisfies convexity, we call it a
\textbf{convex expectation}.

The properties \textup{(ii)}\textup{+}\textup{(iii)} implies

\noindent \textup{\textbf{(vi)}} \textbf{Constant translatability}:
\[
\mathbb{E}[X+c]=\mathbb{E}[X]+c\  \  \text{for}\ c\in \mathbb{R}.
\]
In fact, we have
\[
\mathbb{E}[X]+c=\mathbb{E}[X]-\mathbb{E}[-c]\leq \mathbb{E}[X+c]\leq
\mathbb{E}[X]+\mathbb{{E}}[c]=\mathbb{E}[X]+c.
\]

For property \textup{(iv)}, an equivalence form is
\[
{\mathbb{E}}[\lambda X]=\lambda^{+}{\mathbb{E}}[X]+\lambda^{-}{\mathbb{E}}[-X]
\ ~\text{for}\  \lambda \in \mathbb{R}.
\]

\end{remark}

In this paper, we mainly consider the following type of nonlinear expectation
spaces $(\Omega,\mathcal{H},\mathbb{E}\mathbb{)}$: if $X_{1},\cdots,X_{n}%
\in \mathcal{H}$ then $\varphi(X_{1},\cdots,X_{n})\in \mathcal{H}$ for each
$\varphi \in C_{l.Lip}(\mathbb{R}^{n})$ where $C_{l.Lip}(\mathbb{R}^{n})$
denotes the linear space of functions $\varphi$ satisfying
\begin{align*}
|\varphi(x)-\varphi(y)|  &  \leq C(1+|x|^{m}+|y|^{m})|x-y|\  \  \text{for}%
\ x,y\in \mathbb{R}^{n}\text{, \ }\\
\  &  \text{some }C>0\text{, }m\in \mathbb{N}\text{ depending on }\varphi.
\end{align*}
In this case $X=(X_{1},\cdots,X_{n})$ is called an $n$-dimensional random
vector, denoted by $X\in \mathcal{H}^{n}$.

\begin{remark}
It is clear that if $X\in \mathcal{H}$ then $|X|$, $X^{m}\in \mathcal{H}$. More
generally, $\varphi(X)\psi(Y)\in \mathcal{H}$ if $X,Y\in \mathcal{H}$ and
$\varphi,\psi \in C_{l.Lip}(\mathbb{R})$. In particular, if $X\in \mathcal{H}$
then $\mathbb{E}[|X|^{n}]<\infty$ for each $n\in \mathbb{N}$.
\end{remark}

Here we use $C_{l.Lip}(\mathbb{R}^{n})$ in our framework only for some
convenience of techniques. In fact our essential requirement is that
$\mathcal{H}$ contains all constants and, moreover, $X\in \mathcal{H}$ implies
$\left \vert X\right \vert \in \mathcal{H}$. In general, $C_{l.Lip}%
(\mathbb{R}^{n})$ can be replaced by any one of the following spaces of
functions defined on $\mathbb{R}^{n}$.

\begin{itemize}
\item { $\mathbb{L}^{\infty}(\mathbb{R}^{n})$: the space of bounded
Borel-measurable functions; }

\item { $C_{b}(\mathbb{R}^{n})$: the space of bounded and continuous
functions; \label{cbr}}

\item {$C_{b}^{k}(\mathbb{R}^{n})$: the space of bounded and $k$-time
continuously differentiable functions with bounded derivatives of all orders
less than or equal to $k$;\label{cbk}}

\item { $C_{unif}(\mathbb{R}^{n})$: the space of bounded and uniformly
continuous functions; \label{cu}}

\item { $C_{b.Lip}(\mathbb{R}^{n})$: the space of bounded and Lipschitz
continuous functions; \label{cbl}}

\item { $L^{0}(\mathbb{R}^{n})$: the space of Borel measurable functions.
\label{llll0}}
\end{itemize}

\subsection{Representation of a sublinear expectation}

A sublinear expectation can be expressed as a supremum of linear expectations.

\begin{theorem}
\label{t1} Let {$\mathbb{E}$} be a functional defined on a linear space
$\mathcal{H}$ satisfying sub-additivity and positive homogeneity. Then there
exists a family of linear functionals $\{E_{\theta}:\theta \in \Theta \}$ defined
on $\mathcal{H}$ such that
\[
{\mathbb{E}}[X]=\sup_{\theta \in \Theta}E_{\theta}[X]\  \  \text{for}%
\ X\in \mathcal{H}%
\]
and, for each $X\in \mathcal{H}$, there exists $\theta_{X}\in \Theta$ such that
{$\mathbb{E}$}$[X]=E_{\theta_{X}}[X]$.

Furthermore, if $\mathbb{E}$ is a sublinear expectation, then the
corresponding $E_{\theta}$ is a linear expectation.
\end{theorem}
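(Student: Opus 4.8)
The plan is to view $\mathbb{E}$ as a gauge functional and apply the Hahn--Banach extension theorem. Sub-additivity together with positive homogeneity says exactly that $\mathbb{E}$ is a sublinear (Minkowski) functional on $\mathcal{H}$: $\mathbb{E}[\lambda X]=\lambda\mathbb{E}[X]$ for $\lambda\ge 0$ and $\mathbb{E}[X+Y]\le\mathbb{E}[X]+\mathbb{E}[Y]$. Two elementary consequences I would record first: taking $\lambda=0$ in positive homogeneity gives $\mathbb{E}[0]=0$, and then sub-additivity yields $0=\mathbb{E}[X+(-X)]\le\mathbb{E}[X]+\mathbb{E}[-X]$, i.e. $-\mathbb{E}[-X]\le\mathbb{E}[X]$ for every $X\in\mathcal{H}$; this will be used repeatedly.

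For a fixed $X_0\in\mathcal{H}$, I would define a linear functional on the line $\mathbb{R}X_0$ by $\ell_0(\lambda X_0):=\lambda\mathbb{E}[X_0]$ and check that $\ell_0\le\mathbb{E}$ there: for $\lambda\ge 0$ this is the equality $\ell_0(\lambda X_0)=\lambda\mathbb{E}[X_0]=\mathbb{E}[\lambda X_0]$, and for $\lambda=-\mu<0$ it reduces to $-\mathbb{E}[X_0]\le\mathbb{E}[-X_0]$, just shown. The Hahn--Banach theorem (via Zorn's lemma) then extends $\ell_0$ to a linear functional $E_{X_0}$ on all of $\mathcal{H}$ with $E_{X_0}\le\mathbb{E}$ pointwise and $E_{X_0}[X_0]=\ell_0(X_0)=\mathbb{E}[X_0]$. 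Letting $\Theta$ index the collection of all linear functionals on $\mathcal{H}$ dominated by $\mathbb{E}$ (which contains every $E_{X_0}$), domination gives $\sup_{\theta\in\Theta}E_\theta[X]\le\mathbb{E}[X]$ for all $X$, while for each fixed $X$ the functional $E_X\in\Theta$ attains equality at $X$; hence $\mathbb{E}[X]=\sup_{\theta\in\Theta}E_\theta[X]=E_{\theta_X}[X]$, where $\theta_X$ indexes $E_X$. This proves the first assertion.

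For the final claim, assume in addition that $\mathbb{E}$ is monotone and constant preserving, and fix $\theta\in\Theta$. Then $E_\theta[c]\le\mathbb{E}[c]=c$ and $-E_\theta[c]=E_\theta[-c]\le\mathbb{E}[-c]=-c$ force $E_\theta[c]=c$; and if $X\ge Y$ then $Y-X\le 0$, so monotonicity of $\mathbb{E}$ gives $\mathbb{E}[Y-X]\le\mathbb{E}[0]=0$, whence $E_\theta[Y]-E_\theta[X]=E_\theta[Y-X]\le\mathbb{E}[Y-X]\le 0$. Thus each $E_\theta$ is monotone and constant preserving, i.e. a linear expectation. The only ingredient beyond routine checking is the Hahn--Banach extension (the appeal to Zorn's lemma); the point I expect to require the most care is the verification that $\mathbb{E}$ dominates $\ell_0$ on the negative half-line, which is precisely where sub-additivity, rather than mere positive homogeneity, is used.
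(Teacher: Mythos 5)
Your proof is correct and is exactly the standard Hahn--Banach argument this theorem rests on (the paper states it without proof, citing Peng's earlier work, where the same extension-from-a-one-dimensional-subspace construction is used): domination of the linear functional $\ell_0$ on the ray and on the negative half-line via sub-additivity, extension by Hahn--Banach, and the deduction of constant preservation and monotonicity for each dominated $E_\theta$ are all as expected. No gaps.
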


\begin{remark}
\label{r1} It is important to observe that the above linear expectation
$E_{\theta}$ is only assumed to be finitely additive. But we can apply the
well-known Daniell-Stone Theorem to prove that there is a unique $\sigma
$-additive probability measure $P_{\theta}$ on $(\Omega,\sigma(\mathcal{H}))$
such that%
\[
E_{\theta}[X]=\int_{\Omega}X(\omega)dP_{\theta},\  \  \ X\in \mathcal{H}\text{.}%
\]
The corresponding model uncertainty of probabilities is the subset
$\{P_{\theta}:\theta \in \Theta \}$, and the corresponding uncertainty of
distributions for an $n$-dimensional random vector $X$ in $\mathcal{H}$ is
$\{F_{X}(\theta,A):=P_{\theta}(X\in A):A\in \mathcal{B}({\mathbb{R}}^{n})\}$.
\end{remark}

\subsection{Distributions, independence and product spaces}

We now give the notion of distributions of random variables under nonlinear expectations.

Let $X=(X_{1},\cdots,X_{n})$ be a given $n$-dimensional random vector on a
nonlinear expectation space $(\Omega,\mathcal{H},\mathbb{E})$. We define a
functional on $C_{l.Lip}(\mathbb{R}^{n})$ by
\[
\mathbb{F}^{X}[\varphi]:=\mathbb{E}[\varphi(X)]:\varphi \in C_{l.Lip}%
(\mathbb{R}^{n})\rightarrow \mathbb{R}.
\]
The triple $(\mathbb{R}^{n},C_{l.Lip}(\mathbb{R}^{n}),\mathbb{F}_{X})$ forms a
nonlinear expectation space. $\mathbb{F}_{X}$ is called the
\textbf{distribution} of $X$ under $\mathbb{E}$. In the $\sigma$-additive
situation (see Remark \ref{r1}), we have the following form:%
\[
\mathbb{F}_{X}[\varphi]=\sup_{\theta \in \Theta}\int_{\mathbb{R}^{n}}%
\varphi(x)F_{X}(\theta,dx).
\]

\begin{definition}
\label{d1} Let $X_{1}$ and $X_{2}$ be two $n$--dimensional random vectors
defined on {nonlinear expectation spaces }$(\Omega_{1},\mathcal{H}%
_{1},\mathbb{{E}}_{1})${ and }$(\Omega_{2},\mathcal{H}_{2},\mathbb{E}_{2})$,
respectively. They are called \textbf{identically distributed}, denoted by
$X_{1}\overset{d}{=}X_{2}$ or $\mathbb{F}^{X_{1}}=\mathbb{F}^{X_{2}}$, if
\[
\mathbb{E}_{1}[\varphi(X_{1})]=\mathbb{E}_{2}[\varphi(X_{2})]\  \  \  \text{for}%
\  \varphi \in C_{l.Lip}(\mathbb{R}^{n}).
\]
It is clear that $X_{1}\overset{d}{=}X_{2}$ if and only if their distributions
coincide. We say that the distribution of $X_{1}$ is stronger than that of
$X_{2}$ if $\mathbb{E}_{1}[\varphi(X_{1})]\geq \mathbb{E}_{2}[\varphi(X_{2})]$,
for each $\varphi \in C_{l.Lip}(\mathbb{R}^{n})$.
\end{definition}

\begin{remark}
In the case of sublinear expectations, $X_{1}\overset{d}{=}X_{2}$ implies that
the uncertainty subsets of distributions of $X_{1}$ and $X_{2}$ are the same,
e.g., in the framework of Remark \ref{r1},
\[
\{F_{X_{1}}(\theta_{1},\cdot):\theta_{1}\in \Theta_{1}\}=\{F_{X_{2}}(\theta
_{2},\cdot):\theta_{2}\in \Theta_{2}\}.
\]
Similarly if the distribution of $X_{1}$ is stronger than that of $X_{2}$,
then
\[
\{F_{X_{1}}(\theta_{1},\cdot):\theta_{1}\in \Theta_{1}\} \supset \{F_{X_{2}%
}(\theta_{2},\cdot):\theta_{2}\in \Theta_{2}\}.
\]

\end{remark}

The distribution of $X\in \mathcal{H}$ has the following four typical
parameters:
\[
\bar{\mu}:={\mathbb{E}}[X],\  \  \underline{\mu}:=-\mathbb{{E}}%
[-X],\  \  \  \  \  \  \  \  \bar{\sigma}^{2}:={\mathbb{E}}[X^{2}],\  \  \underline
{\sigma}^{2}:=-{\mathbb{E}}[-X^{2}].\  \
\]
The intervals $[\underline{\mu},\bar{\mu}]$ and $[\underline{\sigma}^{2}%
,\bar{\sigma}^{2}]$ characterize the \textbf{mean-uncertainty} and the
\textbf{variance-uncertainty} of $X$ respectively.

The following property is very useful in our sublinear expectation theory.

\begin{proposition}
{ \label{Prop-X+Y}Let }$(\Omega,\mathcal{H},\mathbb{E})$ be a sublinear
expectation space and { $X,Y{\ }$be two random variables such that
$\mathbb{E}[Y]=-\mathbb{E}[-Y]$, i.e., }$Y$ has no mean-uncertainty.{ Then we
have}%
\[
\mathbb{E}[X+\alpha Y]=\mathbb{E}[X]+\alpha \mathbb{{E}}[Y]\  \  \  \text{for}%
\  \alpha \in \mathbb{R}.
\]
{In particular, if $\mathbb{E}[Y]=\mathbb{E}[-Y]=0$, then $\mathbb{E}[X+\alpha
Y]=\mathbb{E}[X]$. }\bigskip
\end{proposition}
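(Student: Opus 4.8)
The plan is to use the sublinearity of $\mathbb{E}$ together with the assumed absence of mean-uncertainty of $Y$ to squeeze $\mathbb{E}[X+\alpha Y]$ between two equal bounds. The key point is that sub-additivity (iii) gives an upper bound $\mathbb{E}[X+\alpha Y]\leq \mathbb{E}[X]+\mathbb{E}[\alpha Y]$, while writing $X=(X+\alpha Y)+(-\alpha Y)$ and applying sub-additivity again yields $\mathbb{E}[X]\leq \mathbb{E}[X+\alpha Y]+\mathbb{E}[-\alpha Y]$, i.e. $\mathbb{E}[X+\alpha Y]\geq \mathbb{E}[X]-\mathbb{E}[-\alpha Y]$. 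So in general one always has
\[
\mathbb{E}[X]-\mathbb{E}[-\alpha Y]\leq \mathbb{E}[X+\alpha Y]\leq \mathbb{E}[X]+\mathbb{E}[\alpha Y].
\]
The whole proof then reduces to showing that the lower and upper bounds agree under the hypothesis on $Y$.

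First I would handle the case $\alpha\geq 0$. By positive homogeneity (iv), $\mathbb{E}[\alpha Y]=\alpha\mathbb{E}[Y]$ and $\mathbb{E}[-\alpha Y]=\alpha\mathbb{E}[-Y]$. The hypothesis $\mathbb{E}[Y]=-\mathbb{E}[-Y]$ means $\mathbb{E}[-Y]=-\mathbb{E}[Y]$, so the lower bound becomes $\mathbb{E}[X]-\alpha\mathbb{E}[-Y]=\mathbb{E}[X]+\alpha\mathbb{E}[Y]$, which matches the upper bound $\mathbb{E}[X]+\alpha\mathbb{E}[Y]$ exactly. Hence equality holds and $\mathbb{E}[X+\alpha Y]=\mathbb{E}[X]+\alpha\mathbb{E}[Y]$ for $\alpha\geq 0$.

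For $\alpha<0$, I would write $\alpha Y=(-\alpha)(-Y)$ with $-\alpha>0$, and observe that $-Y$ also has no mean-uncertainty: indeed $\mathbb{E}[-Y]=-\mathbb{E}[Y]=-(-\mathbb{E}[-Y])=\mathbb{E}[-(-Y)]$ reversed, i.e. $\mathbb{E}[-Y]=-\mathbb{E}[-(-Y)]$, which is precisely the no-mean-uncertainty condition for $-Y$. Applying the already-proved case to $X$ and $-Y$ with positive coefficient $-\alpha$ gives $\mathbb{E}[X+(-\alpha)(-Y)]=\mathbb{E}[X]+(-\alpha)\mathbb{E}[-Y]=\mathbb{E}[X]+(-\alpha)(-\mathbb{E}[Y])=\mathbb{E}[X]+\alpha\mathbb{E}[Y]$, as desired. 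The final sentence about $\mathbb{E}[Y]=\mathbb{E}[-Y]=0$ is then immediate: the common value is $0$, so $\mathbb{E}[X+\alpha Y]=\mathbb{E}[X]$.

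There is no real obstacle here; the only thing to be a little careful about is not to invoke positive homogeneity with a negative scalar, which is why the $\alpha<0$ case is routed through $-Y$ rather than handled directly. An alternative that avoids the case split entirely is to use the equivalent form of (iv) noted in the remarks, $\mathbb{E}[\lambda Z]=\lambda^{+}\mathbb{E}[Z]+\lambda^{-}\mathbb{E}[-Z]$, applied to $Z=Y$: when $Y$ has no mean-uncertainty this collapses to $\mathbb{E}[\lambda Y]=\lambda\mathbb{E}[Y]$ for all real $\lambda$, after which the two-sided sub-additivity bound closes immediately. Either route is short; I would likely present the sandwich argument since it makes transparent why no-mean-uncertainty of $Y$ is exactly the condition needed.
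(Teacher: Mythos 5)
Your argument is correct: the two-sided sub-additivity bound combined with positive homogeneity (applied to $-Y$ when $\alpha<0$, or equivalently via $\mathbb{E}[\lambda Y]=\lambda^{+}\mathbb{E}[Y]+\lambda^{-}\mathbb{E}[-Y]$) is exactly the standard proof of this fact, as found in [Peng2010]. The paper itself states the proposition without proof, so there is nothing to compare against beyond noting that your sandwich argument is the canonical one and is complete.
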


\begin{definition}
A sequence of $n$-dimensional random vectors $\left \{  \eta_{i}\right \}
_{i=1}^{\infty}$ defined on a nonlinear expectation space $(\Omega
,\mathcal{H},\mathbb{E})$ is said to \textbf{converge in distribution} (or
\textbf{converge in law}) under $\mathbb{E}$ if for each $\varphi \in
C_{b.Lip}(\mathbb{R}^{n})$, the sequence $\left \{  \mathbb{E}[\varphi(\eta
_{i})]\right \}  _{i=1}^{\infty}$ converges.
\end{definition}

The following result is easy to check.

\begin{proposition}
Let $\left \{  \eta_{i}\right \}  _{i=1}^{\infty}$ converges in law in the above
sense. Then the mapping $\mathbb{{F}}[\cdot]:C_{b.Lip}(\mathbb{R}%
^{n})\rightarrow \mathbb{R}$ defined by%
\[
\mathbb{{F}}[\varphi]:=\lim_{i\rightarrow \infty}\mathbb{E}[\varphi(\eta
_{i})]\  \  \text{for}\  \varphi \in C_{b.Lip}(\mathbb{R}^{n})
\]
is a nonlinear expectation defined on $(\mathbb{R}^{n},C_{b.Lip}%
(\mathbb{R}^{n}))$. If $\mathbb{E}$ is sublinear (resp. linear), then
$\mathbb{F}$ is also sublinear (resp. linear).
\end{proposition}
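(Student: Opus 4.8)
The plan is to verify each defining property of a nonlinear expectation directly from the definition of $\mathbb{F}$ as a pointwise limit, exploiting that every algebraic operation we use (sums, nonnegative scalar multiples, constants) keeps a function inside $C_{b.Lip}(\mathbb{R}^n)$, so that the relevant quantities $\mathbb{E}[\varphi(\eta_i)]$ are defined and converge.

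First I would record that $\mathbb{F}$ is well defined and $\mathbb{R}$-valued: for $\varphi\in C_{b.Lip}(\mathbb{R}^n)$ one has $-\|\varphi\|_{\infty}\le\varphi(x)\le\|\varphi\|_{\infty}$ for all $x$, so by monotonicity and constant preservation of $\mathbb{E}$, $|\mathbb{E}[\varphi(\eta_i)]|\le\|\varphi\|_{\infty}$ for every $i$; the hypothesis of convergence in law then guarantees that the limit $\mathbb{F}[\varphi]$ exists and lies in $[-\|\varphi\|_{\infty},\|\varphi\|_{\infty}]$. Monotonicity of $\mathbb{F}$ follows because $\varphi\ge\psi$ pointwise implies $\varphi(\eta_i)\ge\psi(\eta_i)$ in $\mathcal{H}$, hence $\mathbb{E}[\varphi(\eta_i)]\ge\mathbb{E}[\psi(\eta_i)]$, and the inequality is preserved in the limit. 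Constant preservation is immediate: the constant function $c$ lies in $C_{b.Lip}(\mathbb{R}^n)$ and $\mathbb{E}[c]=c$ for all $i$, so $\mathbb{F}[c]=c$. This already shows $\mathbb{F}$ is a nonlinear expectation on $(\mathbb{R}^n,C_{b.Lip}(\mathbb{R}^n))$.

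Next, assume $\mathbb{E}$ is sublinear. For $\varphi,\psi\in C_{b.Lip}(\mathbb{R}^n)$ the functions $\varphi+\psi$ and $\lambda\varphi$ (for $\lambda\ge 0$) again belong to $C_{b.Lip}(\mathbb{R}^n)$, so $\mathbb{F}$ is defined on them. Sub-additivity of $\mathbb{E}$ gives $\mathbb{E}[(\varphi+\psi)(\eta_i)]\le\mathbb{E}[\varphi(\eta_i)]+\mathbb{E}[\psi(\eta_i)]$, and positive homogeneity gives $\mathbb{E}[(\lambda\varphi)(\eta_i)]=\lambda\mathbb{E}[\varphi(\eta_i)]$; letting $i\to\infty$ yields $\mathbb{F}[\varphi+\psi]\le\mathbb{F}[\varphi]+\mathbb{F}[\psi]$ and $\mathbb{F}[\lambda\varphi]=\lambda\mathbb{F}[\varphi]$, so $\mathbb{F}$ is sublinear. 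If instead $\mathbb{E}$ is linear, the same passage to the limit with equalities in place of the inequality gives $\mathbb{F}[\alpha\varphi+\beta\psi]=\alpha\mathbb{F}[\varphi]+\beta\mathbb{F}[\psi]$ for all $\alpha,\beta\in\mathbb{R}$, so $\mathbb{F}$ is linear.

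There is essentially no serious obstacle: the only points that deserve a word of care are (a) that the limit defining $\mathbb{F}$ is finite, which is handled by the uniform bound $\|\varphi\|_{\infty}$, and (b) that $C_{b.Lip}(\mathbb{R}^n)$ is stable under the operations used, which is elementary. Everything else amounts to transferring an inequality or equality valid for each $\eta_i$ through the limit, which is legitimate precisely because the convergence-in-law hypothesis is stated for the entire test class $C_{b.Lip}(\mathbb{R}^n)$ on which $\mathbb{F}$ is to be defined.
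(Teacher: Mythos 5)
Your verification is correct and is precisely the routine check the paper leaves to the reader (the paper states only that "the following result is easy to check" and gives no proof): each defining property of a (sub)linear expectation is an inequality or identity valid for every $\eta_i$ and stable under passage to the limit, and the convergence-in-law hypothesis guarantees the limits exist on all of $C_{b.Lip}(\mathbb{R}^n)$. Nothing is missing.
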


The following notion of independence plays a key role in the sublinear
expectation theory.

\begin{definition}
\label{d2} In a nonlinear expectation space $(\Omega,\mathcal{H},\mathbb{E})$,
a random vector $Y\in \mathcal{H}^{n}$ is said to be \textbf{independent} of
another random vector $X\in \mathcal{H}^{m}$ under $\mathbb{E}[\cdot]$ if for
each test function $\varphi \in C_{l.Lip}(\mathbb{R}^{m+n})$ we have
\[
\mathbb{E}[\varphi(X,Y)]=\mathbb{E}[\mathbb{E}[\varphi(x,Y)]_{x=X}].
\]

\end{definition}

\begin{remark}
In a sublinear expectation space $(\Omega,\mathcal{H},\mathbb{E})$, $Y$ is
independent of $X$ means that the uncertainty of distributions $\{F_{Y}%
(\theta,\cdot):\theta \in \Theta \}$ of $Y$ does not change after the realization
of $X=x$. In other words, the \textquotedblleft conditional sublinear
expectation\textquotedblright \ of $Y$ with respect to $X$ is $\mathbb{E}%
[\varphi(x,Y)]_{x=X}$. In the case of linear expectation, this notion of
independence is just the classical one.
\end{remark}

\begin{remark}
It is important to note that under a sublinear expectation the condition
\textquotedblleft$Y$ is independent from $X$\textquotedblright \ does not imply
automatically that \textquotedblleft$X$ is independent from $Y$%
\textquotedblright.
\end{remark}

The independence property of two random vectors $X,Y$ involves only the
``joint distribution'' of $(X,Y)$. The following result tells us how to
construct random vectors with given ``marginal distributions'' and with a
specific direction of independence.

\begin{definition}
\label{dd1} Let $(\Omega_{i},\mathcal{H}_{i},\mathbb{E}_{i})_{i\in I}$ be
nonlinear expectation spaces indexed by $I$ We denote%
\[%
%TCIMACRO{\dprod \limits_{i\in I}}%
%BeginExpansion
{\displaystyle \prod \limits_{i\in I}}
%EndExpansion
\Omega_{i}=\{(\omega_{i}:i\in I):\omega_{i}\in \Omega_{i},\ i\in I\}
\]
\begin{align*}
\bigotimes_{i\in I}\mathcal{H}_{i}  &  :=\{Y(\omega_{i_{1}},\cdots
,\omega_{i_{n}})=\varphi(X_{i_{1}}(\omega_{i_{1}}),\cdots,X_{i_{n}}%
(\omega_{i_{n}})):(\omega_{1},\omega_{2})\in \Omega_{1}\times \Omega_{2},\  \\
\  \  &  \ X_{i_{k}}\in \mathcal{H}_{i_{k}}^{d_{k}},\ k=1,\cdots,n,\  \varphi \in
C_{l.Lip}(\mathbb{R}^{d_{1}+\cdots+d_{n}})\},\  \
\end{align*}

\end{definition}

We denote by $\Omega:=%
%TCIMACRO{\dprod \limits_{i\in I}}%
%BeginExpansion
{\displaystyle \prod \limits_{i\in I}}
%EndExpansion
\Omega_{i}$ and $\mathcal{H}:=\bigotimes_{i\in I}\mathcal{H}_{i}$. It is clear
that $\mathcal{H}$ forms a vector lattice on $\Omega$. Let $\mathbb{E}$ be a
nonlinear expectation defined on $\mathcal{H}$. If for each $i\in I$ and
$X_{i}\in \mathcal{H}_{i}^{d_{i}}$ we always have $\mathbb{E}[\varphi
(X_{i})]=\mathbb{E}_{i}[\varphi(X_{i})]$, then we say that the margin of
$\mathbb{E}$ coincides with $\mathbb{E}_{i}$.

\begin{remark}
In the last section, the above notion of independence is extended to an
$\mathbb{C}$-valued linear expectation space $(\Omega,\mathcal{H},\mathbb{E})$.
\end{remark}

\subsection{Completion of a sublinear expectation space}

\label{c1s5} Let $(\Omega,\mathcal{H},\mathbb{E})$ be a sublinear expectation
space. We have the following useful inequalities. { }

We first give the following well-known inequalities.

\begin{lemma}
{ {F{or $r>0$ and $1<p,q<\infty$ with $\frac{1}{p}+\frac{1}{q}=1$, we have
\begin{align}
|a+b|^{r}  &  \leq \max \{1,2^{r-1}\}(|a|^{r}+|b|^{r})\  \  \text{for}%
\ a,b\in \mathbb{R},\label{ee04.3}\\
|ab|  &  \leq \frac{|a|^{p}}{p}+\frac{|b|^{q}}{q}. \label{ee04.4}%
\end{align}
} } }
\end{lemma}

\begin{proposition}
\label{ppp1} { { For each $X,Y\in$}}$\mathcal{H}${{, we have{
\begin{align}
\mathbb{E}[|X+Y|^{r}]  &  \leq2^{r-1}(\mathbb{E}[|X|^{r}]+\mathbb{{E}[}%
|Y|^{r}]),\label{ee04.5}\\
\mathbb{E}[|XY|]  &  \leq(\mathbb{E}[|X|^{p}])^{1/p}\cdot(\mathbb{E}%
[|Y|^{q}])^{1/q},\label{ee04.6}\\
(\mathbb{E}[|X+Y|^{p}])^{1/p}  &  \leq(\mathbb{E}[|X|^{p}])^{1/p}%
+(\mathbb{E}[|Y|^{p}])^{1/p}, \label{ee04.7}%
\end{align}
where {{$r$}}}}}${{{{{\geq1}}}}}${{{{{ and $1<p,q<\infty$ with $\frac{1}%
{p}+\frac{1}{q}=1.$}} }}}

In particular, for $1\leq p<p^{\prime}$, we have $(\mathbb{E}[|X|^{p}%
])^{1/p}\leq(\mathbb{E}[|X|^{p^{\prime}}])^{1/p^{\prime}}.$
\end{proposition}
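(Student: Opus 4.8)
The plan is to establish the three inequalities \eqref{ee04.5}, \eqref{ee04.6}, \eqref{ee04.7} in order, each by applying the pointwise inequalities of the preceding Lemma together with the monotonicity, sub-additivity and positive homogeneity of $\mathbb{E}$, and then deduce the final comparison of $L^p$-norms from \eqref{ee04.6}. Throughout I will use implicitly that $\mathcal{H}$ is closed under the maps $x\mapsto|x|^r$ (so all the quantities written down are finite, by the Remark following the definition of $C_{l.Lip}$), and the elementary fact, already noted in Remark after Definition \ref{Def-1 copy(1)}, that $(ii)+(iii)$ give $\mathbb{E}[X+c]=\mathbb{E}[X]+c$.

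For \eqref{ee04.5}: apply \eqref{ee04.3} with $a=X(\omega)$, $b=Y(\omega)$, $r\geq1$ so that $\max\{1,2^{r-1}\}=2^{r-1}$, obtaining the pointwise bound $|X+Y|^r\leq 2^{r-1}(|X|^r+|Y|^r)$; then monotonicity, sub-additivity and positive homogeneity of $\mathbb{E}$ yield $\mathbb{E}[|X+Y|^r]\leq \mathbb{E}[2^{r-1}(|X|^r+|Y|^r)]\leq 2^{r-1}(\mathbb{E}[|X|^r]+\mathbb{E}[|Y|^r])$. For \eqref{ee04.6} (Hölder): set $\alpha=(\mathbb{E}[|X|^p])^{1/p}$ and $\beta=(\mathbb{E}[|Y|^q])^{1/q}$. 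If $\alpha=0$ or $\beta=0$ one handles the degenerate case separately (if, say, $\alpha=0$ then for any $\varepsilon>0$, $|XY|\le \varepsilon^{-1}|X|^p/p+\varepsilon|Y|^q/q\cdot(\text{const})$ forces $\mathbb{E}[|XY|]=0$; more cleanly, apply the nondegenerate argument to $|X|+\varepsilon$ and let $\varepsilon\downarrow0$). Otherwise apply \eqref{ee04.4} pointwise to $a=X(\omega)/\alpha$, $b=Y(\omega)/\beta$: $\frac{|XY|}{\alpha\beta}\leq \frac{|X|^p}{p\,\alpha^p}+\frac{|Y|^q}{q\,\beta^q}$, and then take $\mathbb{E}$ of both sides, using sub-additivity and positive homogeneity, to get $\frac{1}{\alpha\beta}\mathbb{E}[|XY|]\leq \frac{1}{p\,\alpha^p}\mathbb{E}[|X|^p]+\frac{1}{q\,\beta^q}\mathbb{E}[|Y|^q]=\frac1p+\frac1q=1$, i.e. $\mathbb{E}[|XY|]\leq\alpha\beta$.

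For \eqref{ee04.7} (Minkowski): write $|X+Y|^p\leq|X+Y|^{p-1}(|X|+|Y|)=|X+Y|^{p-1}|X|+|X+Y|^{p-1}|Y|$ pointwise; apply $\mathbb{E}$, use sub-additivity, then \eqref{ee04.6} to each term with exponents $q'=p/(p-1)$ and $p$, noting $\mathbb{E}[(|X+Y|^{p-1})^{q'}]=\mathbb{E}[|X+Y|^p]$; this gives $\mathbb{E}[|X+Y|^p]\leq (\mathbb{E}[|X+Y|^p])^{1-1/p}\big((\mathbb{E}[|X|^p])^{1/p}+(\mathbb{E}[|Y|^p])^{1/p}\big)$. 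If $\mathbb{E}[|X+Y|^p]=0$ the claim is trivial; otherwise divide by $(\mathbb{E}[|X+Y|^p])^{1-1/p}$. Finally, for the last assertion with $1\le p<p'$: if $\mathbb{E}[|X|^{p'}]<\infty$, apply \eqref{ee04.6} to $|X|^p$ and the constant $1$ with exponent $r:=p'/p>1$ and its conjugate $r'$, giving $\mathbb{E}[|X|^p]=\mathbb{E}[|X|^p\cdot 1]\leq(\mathbb{E}[|X|^{p'}])^{p/p'}\cdot(\mathbb{E}[1])^{1/r'}=(\mathbb{E}[|X|^{p'}])^{p/p'}$ since $\mathbb{E}[1]=1$; raising to the power $1/p$ finishes it.

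No step here is a genuine obstacle — the entire argument is the classical one with $\mathbb{E}$ playing the role of the integral, and the only properties used are monotonicity, positive homogeneity and sub-additivity (plus $\mathbb{E}[1]=1$). The one point requiring a little care, and the one I would flag, is the handling of the degenerate cases $\alpha\beta=0$ in Hölder and $\mathbb{E}[|X+Y|^p]=0$ in Minkowski, since we cannot divide by zero; the clean fix is either the separate sub-case argument or a limiting argument replacing $|X|$ by $|X|+\varepsilon$ and letting $\varepsilon\downarrow0$, using constant translatability and continuity of $t\mapsto t^{1/p}$.
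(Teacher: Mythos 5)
Your proof is correct and follows essentially the same route as the paper's: pointwise application of the elementary inequalities from the preceding lemma plus monotonicity, sub-additivity and positive homogeneity of $\mathbb{E}$, the same normalization $\xi=X/(\mathbb{E}[|X|^p])^{1/p}$ for H\"older with an $\varepsilon$-perturbation for the degenerate case, the same splitting $|X+Y|^p=|X+Y|\cdot|X+Y|^{p-1}$ with $(p-1)q=p$ for Minkowski, and the same derivation of the $L^p$-monotonicity from \eqref{ee04.6} applied to $|X|^p\cdot 1$. No substantive differences to report.
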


\begin{proof}
The inequality (\ref{ee04.5}) follows from (\ref{ee04.3}).

For the case $\mathbb{E}[|X|^{p}]\cdot \mathbb{E}[|Y|^{q}]>0$, we set
\[
\xi=\frac{X}{(\mathbb{E}[|X|^{p}])^{1/p}},\  \  \eta=\frac{Y}{(\mathbb{E}%
[|Y|^{q}])^{1/q}}.
\]
By (\ref{ee04.4}) we have%
\begin{align*}
\mathbb{E}[|\xi \eta|]  &  \leq \mathbb{E}[\frac{|\xi|^{p}}{p}+\frac{|\eta|^{q}%
}{q}]\leq \mathbb{E}[\frac{|\xi|^{p}}{p}]+\mathbb{{E}}[\frac{|\eta|^{q}}{q}]\\
&  =\frac{1}{p}+\frac{1}{q}=1.
\end{align*}
Thus (\ref{ee04.6}) follows.

For the case $\mathbb{E}[|X|^{p}]\cdot \mathbb{E}[|Y|^{q}]=0,$ we consider
$\mathbb{E}[|X|^{p}]+\varepsilon$ and $\mathbb{E}[|Y|^{q}]+\varepsilon$ for
$\varepsilon>0.$ Applying the above method and letting $\varepsilon
\rightarrow0,$ we get (\ref{ee04.6}).

We now prove (\ref{ee04.7}). We only consider the case ${\mathbb{E}}%
[|X+Y|^{p}]>0$.
\begin{align*}
\mathbb{E}[|X+Y|^{p}]  &  =\mathbb{E}[|X+Y|\cdot|X+Y|^{p-1}]\\
&  \leq \mathbb{E}[|X|\cdot|X+Y|^{p-1}]+\mathbb{E}[|Y|\cdot|X+Y|^{p-1}]\\
&  \leq(\mathbb{E}[|X|^{p}])^{1/p}\cdot(\mathbb{{E}[}|X+Y|^{(p-1)q}])^{1/q}\\
&  \  \  \ +(\mathbb{E}[|Y|^{p}])^{1/p}\cdot(\mathbb{{E}[}|X+Y|^{(p-1)q}%
])^{1/q}.
\end{align*}
Since $(p-1)q=p$, we have (\ref{ee04.7}).

{By}(\ref{ee04.6}), it is easy to deduce that {{{$(\mathbb{E}[|X|^{p}%
])^{1/p}\leq(\mathbb{E}[|X|^{p^{\prime}}])^{1/p^{\prime}}$ for $1\leq
p<p^{\prime}.$}}}
\end{proof}

\bigskip

{ {For each fixed }$p\geq1$, we observe that $\mathcal{H}_{0}^{p}%
=\{X\in \mathcal{H}$, $\mathbb{E}[|X|^{p}]=0\}$ is a linear subspace of
$\mathcal{H}$. Taking $\mathcal{H}_{0}^{p}$ as our null space, we introduce
the quotient space $\mathcal{H}/\mathcal{H}_{0}^{p}$. Observing that, for
every $\mathbf{\{}X\} \in \mathcal{H}/\mathcal{H}_{0}^{p}$ with a
representation $X\in \mathcal{H}$, we can define an expectation $\mathbb{E}%
\mathbf{[\{}X\}]:=\mathbb{E}[X]$ which is still a sublinear expectation. We
set $\left \Vert X\right \Vert _{p}:=(\mathbb{E}[|X|^{p}])^{\frac{1}{p}}$. By
Proposition \ref{ppp1}, i{{t is easy to check that $\left \Vert \cdot
\right \Vert _{p}$ forms a Banach norm on {{$\mathcal{H}/\mathcal{H}_{0}^{p}$}%
}. We extend $\mathcal{H}/\mathcal{H}_{0}^{p}$ to its completion
$\mathcal{\hat{H}}_{p}$ under\ this norm, then $(\mathcal{\hat{H}}%
_{p},\left \Vert \cdot \right \Vert _{p})$ is a Banach space. \ In particular,
when }}}$p=1,$ we denote it by {{{$(\mathcal{\hat{H}},\left \Vert
\cdot \right \Vert ).$}}}

{{{For each $X\in \mathcal{H}$, the mappings
\[
X^{+}(\omega):\mathcal{H\rightarrow H}\  \  \  \text{and \  \ }X^{-}%
(\omega):\mathcal{H\rightarrow H}%
\]
satisfy
\[
|X^{+}-Y^{+}|\leq|X-Y|\text{ \  \ and \ }\ |X^{-}-Y^{-}|=|(-X)^{+}%
-(-Y)^{+}|\leq|X-Y|.
\]
Thus they are both contraction mappings under $\left \Vert \cdot \right \Vert
_{p}$ and can be continuously extended to the Banach space $(\mathcal{\hat{H}%
}_{p},\left \Vert \cdot \right \Vert _{p})$. } } }

{ { { We can define the partial order \textquotedblleft$\geq$%
\textquotedblright \ in this Banach space. }}}

\begin{definition}
{ { { An element $X$ in $(\mathcal{\hat{H}},\left \Vert \cdot \right \Vert )$ is
said to be nonnegative, or $X\geq0$, $0\leq X$, if $X=X^{+}$. We also denote
by $X\geq Y$, or $Y\leq X$, if $X-Y\geq0$. } } }
\end{definition}

{ { { It is easy to check that $X\geq Y$ and $Y\geq X$ imply $X=Y$ on
$(\mathcal{\hat{H}}_{p},\left \Vert \cdot \right \Vert _{p})$. }}}

For each {{{$X,Y\in \mathcal{H}$, note that }}}%

\[
|{{{\mathbb{E}[X]-\mathbb{E}[Y]|\leq \mathbb{E}[|X-Y|]\leq||X-Y||}}}_{p}.
\]
Thus the {{{sublinear expectation $\mathbb{E}[\cdot]$ can be continuously
extended to $(\mathcal{\hat{H}}_{p},\left \Vert \cdot \right \Vert _{p})$ on
which it is still a sublinear expectation.}}}

Let $(\Omega,\mathcal{H},\mathbb{E}_{1})$ be a nonlinear expectation space.
$\mathbb{E}_{1}$ is said to be \ dominated by $\mathbb{E}$ if%

\[
\mathbb{E}_{1}[X]-\mathbb{E}_{1}[Y]\leq \mathbb{{E}}[X-Y]\  \  \  \  \text{for}%
\ X,Y\in \mathcal{H}.
\]
From this we can easily deduce that $|{{{\mathbb{E}}}}_{1}{{{[X]-\mathbb{E}}}%
}_{1}{{{[Y]|\leq \mathbb{E}[|X-Y|],}}}$ thus the nonlinear {{{expectation
$\mathbb{E}_{1}[\cdot]$ can be continuously extended to $(\mathcal{\hat{H}%
}_{p},\left \Vert \cdot \right \Vert _{p})$ on which it is still a nonlinear
expectation.}}}

\begin{remark}
It is important to note that $X_{1},\cdots,X_{n}\in \mathcal{\hat{H}}$ does not
imply $\varphi(X_{1},\cdots,X_{n})\in \mathcal{\hat{H}}$ for each $\varphi \in
C_{l.Lip}(\mathbb{R}^{n}).$ Thus, when we talk about the notions of
distributions, independence and product spaces on $(\Omega,\mathcal{\hat{H}%
},\mathbb{E}),$ the space $C_{l.Lip}(\mathbb{R}^{n})$ is replaced by
$C_{b.Lip}(\mathbb{R}^{n})$ unless otherwise stated.
\end{remark}

\subsection{$G$-normal distributions}

A well-known characterization for a zero-mean $d$-dimensional normally
distributed random variable $X$ is%

\begin{equation}
aX+b\bar{X}\overset{d}{=}\sqrt{a^{2}+b^{2}}X\  \  \text{for }a,b\geq0,
\label{ch2e1}%
\end{equation}
where $\bar{X}$ is an independent copy of $X$. The covariance matrix $\Sigma$
is defined by $\Sigma=E[XX^{T}]$. We now consider the so called $G$-normal
distribution in probability model uncertainty situation.

\begin{definition}
(\textbf{$G$-normal distribution}) A $d$-dimensional random vector
$X=(X_{1},\cdots,X_{d})$ on a sublinear expectation space $(\Omega
,\mathcal{H},\mathbb{E})$ is called (centralized) \textbf{$G$-normal
distributed} if
\begin{equation}
aX+b\bar{X}\overset{d}{=}\sqrt{a^{2}+b^{2}}X\  \  \  \text{for }a,b\geq
0,\  \label{e311}%
\end{equation}
where $\bar{X}$ is an independent copy of $X$.
\end{definition}

\begin{remark}
Noting that $\mathbb{E}[X+\bar{X}]=2\mathbb{E}[X]$ and $\mathbb{E}[X+\bar
{X}]=\mathbb{E}[\sqrt{2}X]=\sqrt{2}\mathbb{E}[X]$, we then have $\mathbb{E}%
[X]=0.$ Similarly, we can prove that $\mathbb{E}[-X]=0.$ Namely, $X$ has no mean-uncertainty.
\end{remark}

The following property is easy to prove by the definition.

\begin{proposition}
\label{GCD}Let $X$ be $G$-normal distributed. Then for each $A\in
\mathbb{R}^{m\times d}$, $AX$ is also $G$-normal distributed. In particular,
for each $\mathbf{a}\in \mathbb{R}^{d},$ $\left \langle \mathbf{a}%
,X\right \rangle $ is a $1$-dimensional $G$-normal distributed random variable.
\end{proposition}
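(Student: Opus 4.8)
The plan is to verify the defining identity (\ref{e311}) directly for $Y:=AX$, taking as the candidate independent copy of $Y$ the random vector $\bar Y:=A\bar X$, where $\bar X$ is the independent copy of $X$ furnished by the $G$-normality of $X$.

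The first step is to check that $\bar Y=A\bar X$ is indeed an independent copy of $Y=AX$, i.e.\ that $A\bar X\overset{d}{=}AX$ and that $A\bar X$ is independent of $AX$ in the sense of Definition \ref{d2}. The equality in distribution is immediate: for $\varphi\in C_{l.Lip}(\mathbb R^{m})$ the composition $x\mapsto\varphi(Ax)$ lies in $C_{l.Lip}(\mathbb R^{d})$, hence $\mathbb E[\varphi(A\bar X)]=\mathbb E[(\varphi\circ A)(\bar X)]=\mathbb E[(\varphi\circ A)(X)]=\mathbb E[\varphi(AX)]$ since $\bar X\overset{d}{=}X$. For the independence, given $\phi\in C_{l.Lip}(\mathbb R^{m+m})$, I would set $\psi(x,\bar x):=\phi(Ax,A\bar x)\in C_{l.Lip}(\mathbb R^{d+d})$ and apply Definition \ref{d2} to $\bar X$ independent of $X$:
\[
\mathbb E[\phi(AX,A\bar X)]=\mathbb E[\psi(X,\bar X)]=\mathbb E\big[\,\mathbb E[\psi(x,\bar X)]_{x=X}\,\big].
\]
Writing $g(x):=\mathbb E[\phi(Ax,A\bar X)]$ and $h(y):=\mathbb E[\phi(y,A\bar X)]$, one has $g=h\circ A$ as functions, so $g(X)=h(AX)$, which turns the right-hand side into $\mathbb E[\,\mathbb E[\phi(y,A\bar X)]_{y=AX}\,]$; this is exactly the assertion that $A\bar X$ is independent of $AX$. (That $h\in C_{l.Lip}(\mathbb R^{m})$, so that these expressions are well defined, is routine, using $\mathbb E[|A\bar X|^{m}]<\infty$.)

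With $\bar Y=A\bar X$ identified as an independent copy of $Y=AX$, the second step is a one-line computation: for $a,b\ge0$ we have $aY+b\bar Y=A(aX+b\bar X)$, so for every $\varphi\in C_{l.Lip}(\mathbb R^{m})$, using once more $\varphi\circ A\in C_{l.Lip}(\mathbb R^{d})$ and (\ref{e311}) for $X$,
\[
\mathbb E[\varphi(aY+b\bar Y)]=\mathbb E[(\varphi\circ A)(aX+b\bar X)]=\mathbb E[(\varphi\circ A)(\sqrt{a^{2}+b^{2}}\,X)]=\mathbb E[\varphi(\sqrt{a^{2}+b^{2}}\,Y)].
\]
Hence $aY+b\bar Y\overset{d}{=}\sqrt{a^{2}+b^{2}}\,Y$, which is (\ref{e311}) for $Y$; thus $AX$ is $G$-normal distributed. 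The final claim is the special case $m=1$, $A=\mathbf a^{T}\in\mathbb R^{1\times d}$, giving that $\langle\mathbf a,X\rangle=AX$ is a one-dimensional $G$-normal random variable.

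I expect the only delicate point to be the independence verification, specifically the bookkeeping around the frozen-variable substitution $x=X$ and the identity $g=h\circ A$; everything else is a direct transport of the definitions through the linear map $A$, together with the standard facts that $C_{l.Lip}$ is stable under composition with linear maps and under the partial operation $\phi\mapsto\mathbb E[\phi(\cdot,A\bar X)]$.
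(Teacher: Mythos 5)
Your argument is correct and is exactly the direct verification from the definition that the paper has in mind (the paper omits the proof, stating only that the property ``is easy to prove by the definition''): transport the identity $aX+b\bar X\overset{d}{=}\sqrt{a^{2}+b^{2}}X$ through the linear map $A$, after checking that $A\bar X$ is an independent copy of $AX$. The independence bookkeeping via $\psi(x,\bar x)=\phi(Ax,A\bar x)$ and $g=h\circ A$ is the right way to make the frozen-variable substitution rigorous, so nothing is missing.
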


We denote by $\mathbb{S}(d)$ the collection of all $d\times d$ symmetric
matrices. Let $X$ be a $d$-dimensional $G$-normal distributed random vector in
$(\Omega,\mathcal{H},\mathbb{E})$. The following function is very important to
characterize their distributions:
\begin{equation}
G(A):=\mathbb{E}[\frac{1}{2}\left \langle AX,X\right \rangle ],\  \  \ A\in
\mathbb{S}(d). \label{e313}%
\end{equation}
It is easy to check that $G$ is a sublinear function monotonic in
$A\in \mathbb{S}(d)$ in the following sense: for each $A,\bar{A}\in
\mathbb{S}(d)$%
\begin{equation}
\left \{
\begin{array}
[c]{rl}%
G(A+\bar{A}) & \leq G(A)+G(\bar{A}),\\
G(\lambda A) & =\lambda G(A),\  \  \forall \lambda \geq0,\\
G(A) & \geq G(\bar{A}),\  \  \text{if}\ A\geq \bar{A}.
\end{array}
\right.  \label{e314}%
\end{equation}
Clearly, $G$ is also a continuous function. By Theorem \ref{t1}, there exists
a bounded and closed subset $\Gamma \subset \mathbb{R}^{d\times d}$ such that
\begin{equation}
G(A)=\sup_{Q\in \Gamma}\frac{1}{2}\mathrm{tr}[AQQ^{T}]\  \  \  \text{for}%
\ A\in \mathbb{S}(d). \label{ch2e2}%
\end{equation}

The following result can be found in [Peng2010].

\begin{proposition}
\label{Prop-Gnorm copy(1)} Let $G:\mathbb{S}(d)\rightarrow \mathbb{R}$ be a
given sublinear and continuous function, monotonic in $A\in \mathbb{S}(d)$ in
the sense of \textup{(\ref{e314})}. Then there exists a $G$-normal distributed
$d$-dimensional random vector $X$ on some sublinear expectation space
$(\Omega,\mathcal{H},\mathbb{E})$. satisfying \textup{(\ref{e311})}. Moreover,
if both $X$ and $Y$ are $G$-normal \ distributed with the same function $G$,
namely%
\[
\mathbb{E}[\left \langle AX,X\right \rangle ]=\mathbb{E}[\left \langle
AY,Y\right \rangle ]=2G(A),\  \  \forall A\in \mathbb{S}(d),
\]
then $X\overset{d}{=}Y$.
\end{proposition}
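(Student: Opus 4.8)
The plan is to prove existence and uniqueness separately. For \emph{uniqueness}, the key is that the value $\mathbb{E}[\varphi(X)]$ for $\varphi\in C_{b.Lip}(\mathbb{R}^d)$ is determined by $G$ via a PDE. First I would define, for $\varphi\in C_{b.Lip}(\mathbb{R}^d)$, the function
\[
u(t,x):=\mathbb{E}[\varphi(x+\sqrt{t}\,X)],\qquad (t,x)\in[0,\infty)\times\mathbb{R}^d.
\]
Using the defining relation \textup{(\ref{e311})} together with the independence of the copy $\bar X$ and Proposition \ref{Prop-X+Y} (since $X$ has no mean-uncertainty), one shows the semigroup-type identity
\[
u(t+s,x)=\mathbb{E}\big[u(t,\,x+\sqrt{s}\,X)\big],\qquad t,s\ge0,
\]
because $x+\sqrt{t+s}\,X\overset d= x+\sqrt t\,X+\sqrt s\,\bar X$ with $\bar X$ an independent copy. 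A Taylor expansion of $u$ in this identity, controlling the remainder with the moment estimates from Remark and Proposition \ref{ppp1} and the Lipschitz bounds on the derivatives, yields that $u$ solves the parabolic $G$-heat equation
\[
\partial_t u - G\big(D^2_x u\big)=0,\qquad u(0,x)=\varphi(x),
\]
where $G$ is exactly the function in \textup{(\ref{e313})}. Since this fully nonlinear PDE has a unique bounded viscosity solution for each bounded Lipschitz initial datum, $u$ is determined by $G$ alone; evaluating at $(t,x)=(1,0)$ gives $\mathbb{E}[\varphi(X)]=u(1,0)$, so $X\overset d= Y$ whenever $X$ and $Y$ share the same $G$. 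This is the step I expect to be the main obstacle: carefully justifying the Taylor expansion and remainder control so that $u$ is a genuine (viscosity) solution, and invoking the comparison principle for the $G$-equation at the right level of generality.

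For \emph{existence}, I would start from the representation \textup{(\ref{ch2e2})}, $G(A)=\sup_{Q\in\Gamma}\frac12\mathrm{tr}[AQQ^T]$ with $\Gamma\subset\mathbb{R}^{d\times d}$ bounded and closed, which is available by Theorem \ref{t1} and the sublinearity/monotonicity \textup{(\ref{e314})} of the given $G$. Take $\Omega=\mathbb{R}^d$, let $\mathcal{H}=C_{l.Lip}(\mathbb{R}^d)$ with $X(\omega)=\omega$ the coordinate map, and \emph{define} $\mathbb{E}[\varphi(X)]:=u^\varphi(1,0)$ where $u^\varphi$ is the unique viscosity solution of the $G$-heat equation above with initial datum $\varphi$ (first for $\varphi\in C_{b.Lip}$, then extended to $C_{l.Lip}$ via the polynomial growth estimates). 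One then checks directly from the comparison principle and the structure of the PDE that this functional satisfies monotonicity, constant preservation, sub-additivity and positive homogeneity, i.e.\ it is a sublinear expectation; monotonicity and constant preservation are immediate, while sub-additivity and positive homogeneity follow from the sublinearity of $G$ in its matrix argument and the corresponding sub-additivity/homogeneity of solutions of the $G$-equation in the initial datum. Alternatively, and perhaps more transparently, one can realize $\mathbb{E}$ as the upper expectation $\sup_{\theta}E_\theta$ over a family of (classical) Gaussian laws with covariances generated by $\Gamma$, but the PDE route has the advantage of matching the uniqueness argument.

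It remains to verify that the $X$ so constructed is actually $G$-normal, i.e.\ satisfies \textup{(\ref{e311})}. For this I would build the independent copy $\bar X$ on the product space $(\Omega\times\Omega,\mathcal{H}\otimes\mathcal{H},\mathbb{E}\otimes\mathbb{E})$ as in Definition \ref{dd1}, and compute $\mathbb{E}[\varphi(aX+b\bar X)]$ by iterated expectation (independence, Definition \ref{d2}): the inner expectation in the variable $\bar X$ produces $u^\varphi(b^2,\,aX)$-type expression evaluated through the semigroup property, and the outer one then gives $u^\varphi(a^2+b^2,0)=\mathbb{E}[\varphi(\sqrt{a^2+b^2}\,X)]$, using precisely the scaling $u^\varphi(t,\lambda x)=u^{\varphi(\lambda\,\cdot)}(\lambda^2 t,x)$ that reflects $\lambda X\overset d=\sqrt{\lambda^2}X$ at the level of the heat kernel. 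Finally, the identity $\mathbb{E}[\langle AX,X\rangle]=2G(A)$ is obtained by taking $\varphi(x)=\langle Ax,x\rangle$ (approximated in $C_{b.Lip}$ if needed) and reading off $\partial_t u^\varphi(0,0)=G(A)$ directly from the PDE, so the constructed $X$ and any other $G$-normal $Y$ have matching $G$ and hence, by the uniqueness half, $X\overset d= Y$. The one genuinely delicate point throughout is the well-posedness (existence, uniqueness, comparison, regularity, and the scaling invariance) of the fully nonlinear $G$-heat equation; everything else is bookkeeping with the moment inequalities of Proposition \ref{ppp1}.
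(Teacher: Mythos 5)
Your proposal is correct and takes essentially the same route as the paper: the paper defers the proof to [Peng2010] but records precisely this machinery in Appendix A, namely the characterization of the distribution through the unique viscosity solution $u(t,x)=\mathbb{E}[\varphi(x+\sqrt{t}X)]$ of the $G$-heat equation $\partial_t u-G(D^2u)=0$ (which gives uniqueness), with existence obtained by defining $\mathbb{E}[\varphi(X)]:=u^{\varphi}(1,0)$ and tensorizing to produce the independent copy. No genuine gap beyond the PDE well-posedness issues you already flag.
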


We present a central limit theorem in the framework of sublinear expectation
(see [Peng2007], [Peng2009] or [Peng2010, ThmII.3.3).

\begin{theorem}
\label{tCLT}Let $\{X_{i}\}_{i=1}^{\infty}$ be a sequence of $\mathbb{R}^{d}$
valued random vectors in a sublinear expectation space $(\Omega,\mathcal{H}%
,\mathbb{E})$. We assume that $\{X_{i}\}_{i=1}^{\infty}$ is an i.i.d.
sequence, i.e., for each $i=1,2,\cdots$, $X_{i+1}\overset{d}{=}X_{1}$ and it
is independent of $(X_{1},\cdots,X_{i})$. We assume furthermore that
$\mathbb{E}[X_{1}]=\mathbb{E}[-X_{1}]=0$. Then the sequence $\{ \bar{S}%
_{n}\}_{n=1}^{\infty}$ defined by $\bar{S}_{n}=(X_{1}+\cdots+X_{n})/\sqrt{n}$
converges in law to $X$:%
\[
\lim_{n\rightarrow \infty}\mathbb{E}[\varphi(\bar{S}_{n})]=\mathbb{E}%
[\varphi(X)],\  \  \varphi \in C_{b}(\mathbb{R}^{d}),
\]
where $X$ is a d-dimensional $G$-normally distributed random variable with
\[
G(A)=\frac{1}{2}\mathbb{E}[\left \langle AX_{1},X_{1}\right \rangle ].
\]

\end{theorem}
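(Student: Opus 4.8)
The plan is to run the classical Lindeberg interpolation argument against the partial differential equation that describes the $G$-normal law. By Proposition \ref{Prop-Gnorm copy(1)} there is a $G$-normal vector $X$ attached to $G(A)=\tfrac{1}{2}\mathbb{E}[\langle AX_1,X_1\rangle]$, and it is uniquely determined by $G$, so $\mathbb{E}[\varphi(X)]$ is well defined. I would first record the PDE characterization: for $\varphi\in C_{b.Lip}(\mathbb{R}^d)$ put $V(t,x):=\mathbb{E}[\varphi(x+\sqrt{t}\,X)]$ on $[0,1]\times\mathbb{R}^d$; using the scaling identity $aX+b\bar X\overset{d}{=}\sqrt{a^2+b^2}\,X$ and the independence of $\bar X$ one checks (this is the standard verification) that $V$ is the bounded viscosity solution of the $G$-heat equation $\partial_tV=G(D^2V)$ with $V(0,\cdot)=\varphi$, so in particular $V(1,0)=\mathbb{E}[\varphi(X)]$. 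It therefore suffices to prove $\mathbb{E}[\varphi(\bar S_n)]\to V(1,0)$ for $\varphi\in C_{b.Lip}$, and then to remove the Lipschitz assumption at the end.

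For the interpolation, fix $n$, set $\delta=1/n$ and $\bar S_i:=\sqrt{\delta}(X_1+\cdots+X_i)$, so that $\bar S_0=0$, $\bar S_n$ is as in the statement, and $\bar S_{i+1}=\bar S_i+\sqrt{\delta}\,X_{i+1}$. Telescoping along $i$ and using $V(0,\cdot)=\varphi$,
\[
V(1,0)-\mathbb{E}[\varphi(\bar S_n)]=\sum_{i=0}^{n-1}\Big(\mathbb{E}[V(1-i\delta,\bar S_i)]-\mathbb{E}[V(1-(i+1)\delta,\bar S_{i+1})]\Big).
\]
For fixed $x$ I would Taylor-expand $V(1-(i+1)\delta,\,x+\sqrt{\delta}\,X_{i+1})$ about $(1-i\delta,x)$, obtaining $V-\delta\,\partial_tV+\sqrt{\delta}\langle DV,X_{i+1}\rangle+\tfrac{\delta}{2}\langle D^2V\,X_{i+1},X_{i+1}\rangle+R$, with $V,\partial_tV,DV,D^2V$ evaluated at $(1-i\delta,x)$ and treated as constants under $\mathbb{E}$. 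The key cancellations are: the scalar $\langle DV,X_{i+1}\rangle$ has no mean-uncertainty, so by Proposition \ref{Prop-X+Y} its $O(\sqrt{\delta})$ term drops out of the expectation; $\mathbb{E}[\langle D^2V\,X_{i+1},X_{i+1}\rangle]=2G(D^2V)$ since $X_{i+1}\overset{d}{=}X_1$ and by the very definition of $G$; and $-\delta\,\partial_tV+\delta\,G(D^2V)=0$ because $V$ solves the $G$-heat equation. Hence, writing $f_i(x):=\mathbb{E}[V(1-(i+1)\delta,x+\sqrt{\delta}X_{i+1})]$, one gets $f_i(x)=V(1-i\delta,x)+O(\sup_x\mathbb{E}[|R|])$. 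Since $X_{i+1}$ is independent of $(X_1,\dots,X_i)$, Definition \ref{d2} gives $\mathbb{E}[V(1-(i+1)\delta,\bar S_{i+1})]=\mathbb{E}[f_i(\bar S_i)]$, so each telescoping term is $O(\sup_x\mathbb{E}[|R|])$.

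It then remains to estimate the remainder. $R$ contains $\delta^2\partial_{tt}V$, the term $\delta^{3/2}\langle\partial_tDV,X_{i+1}\rangle$, and the difference between $D^2V$ at $x+\sqrt{\delta}X_{i+1}$ and at $x$; using interior regularity of $V$ (bounds on $\partial_{tt}V$ and $\partial_tDV$, and $\alpha$-H\"older continuity of $D^2V$ for some $\alpha\in(0,1)$) together with $\mathbb{E}[|X_1|^{2+\alpha}]<\infty$ (automatic here, since $X_1\in\mathcal{H}$) and the inequalities of Proposition \ref{ppp1}, I expect $\sup_x\mathbb{E}[|R|]\le C\delta^{1+\alpha/2}$ uniformly in $i$. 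Summing the $n=1/\delta$ terms yields $|V(1,0)-\mathbb{E}[\varphi(\bar S_n)]|\le C\delta^{\alpha/2}\to0$. To pass from $\varphi\in C_{b.Lip}$ to $\varphi\in C_b(\mathbb{R}^d)$, I would approximate $\varphi$ uniformly on large balls by bounded Lipschitz functions and bound the tails using the uniform second moment estimate $\sup_n\mathbb{E}[|\bar S_n|^2]\le\mathbb{E}[|X_1|^2]<\infty$, which holds because the cross terms in $\mathbb{E}[|X_1+\cdots+X_n|^2]$ vanish by independence and the zero-mean hypothesis.

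The step I expect to be the main obstacle is the regularity of $V$, which is what makes the remainder estimate go through. If $G$ is non-degenerate, i.e.\ $G(A)-G(B)\ge\underline{\sigma}^2\,\mathrm{tr}[(A-B)^+]$ for some $\underline{\sigma}^2>0$, then the $G$-heat equation is uniformly parabolic and the needed interior $C^{1+\alpha/2,\,2+\alpha}$ (indeed $C^\infty$) estimates are classical. In the degenerate case one must perturb: replace $G$ by $G_\varepsilon(A):=G(A)+\tfrac{\varepsilon}{2}\mathrm{tr}[A]$, which amounts to adding an independent small non-degenerate Gaussian, run the whole argument for $G_\varepsilon$, and then let $\varepsilon\downarrow0$, using stability of viscosity solutions in the nonlinearity $G$ (equivalently, continuity of the $G$-normal law in $G$). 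Carrying out these a priori estimates and the $\varepsilon$-limit carefully, rather than the elementary algebra of the interpolation, is where the real effort goes.
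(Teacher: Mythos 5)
The paper itself gives no proof of this theorem --- it is quoted from [Peng2007] and [Peng2010, Thm.\ II.3.3] --- and your Lindeberg-type interpolation against the viscosity solution of the $G$-heat equation, with the cancellation of the first-order term via the no-mean-uncertainty property, the identification of the second-order term with $G(D^2V)$, the regularization $G_\varepsilon(A)=G(A)+\tfrac{\varepsilon}{2}\mathrm{tr}[A]$ in the degenerate case, and the final passage from $C_{b.Lip}$ to $C_b$ via the uniform bound $\mathbb{E}[|\bar S_n|^2]=\mathbb{E}[|X_1|^2]$, is exactly the proof given in those references. Your sketch is correct in outline; the only point deserving a little more care than you give it is that the interior $C^{1+\alpha/2,2+\alpha}$ estimates degenerate as $t\downarrow 0$ where $V$ is only Lipschitz, so the last few interpolation steps (or a small time shift $V(1+h,0)$ followed by $h\downarrow 0$) must be handled separately.
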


\section{Construction of stochastic processes with a given family of finite
dimensional distributions}

\begin{definition}
Let $(\Omega,\mathcal{H},\mathbb{{E})}$ be a nonlinear expectation space.
$(X_{t})_{t\geq0}$ is called a $d$-dimensional \textbf{stochastic process} if
for each $t\geq0$, $X_{t}$ is a $d$-dimensional random vector in $\mathcal{H}$.
\end{definition}

A typical example of such type of stochastic processes defined on a space of
nonlinear expectation is the so-called $G$-Browian motion.

\begin{definition}
A $d$-dimensional process $(B_{t})_{t\geq0}$ on a sublinear expectation space
$(\Omega,\mathcal{H},\mathbb{E})$ is called a $G$\textbf{--Brownian}
\textbf{motion} if the following properties are satisfied: \newline%
\textup{(i)} $B_{0}(\omega)=0$;\newline \textup{(ii)} For each $t,s\geq0$, the
increment $B_{t+s}-B_{t}\overset{d}{=}B_{s}$ and is independent of $(B_{t_{1}%
},B_{t_{2}},\cdots,B_{t_{n}})$, for each $n\in \mathbb{N}$ and $t_{1}%
,\cdots,t_{n}\in \lbrack0,t]$.\newline(iii) $\lim_{t\rightarrow0}%
\mathbb{E}[|B_{t}|^{3}]/t=0.$
\end{definition}

The following theorem gives a characterization of $G$-Brownian motion.

\begin{theorem}
Let $(B_{t})_{t\geq0}$ be a $d$-dimensional $G$-Brownian motion defined on a
sublinear expectation space $(\Omega,\mathcal{H},\mathbb{E})${, such that
}$\mathbb{E}${$[B_{t}]=\mathbb{E}[-B_{t}]=0$, }$t\geq0$. Then, for each $t>0$,
$B_{t}${ is normally distributed, namely,}%
\[
aB_{t}+b(B_{2t}-B_{t})\overset{d}{=}\sqrt{a^{2}+b^{2}}B_{t},\ a,b\  \geq0.
\]
{\newline}
\end{theorem}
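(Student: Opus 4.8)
The plan is to reduce the claimed stability identity for $B_t$ to the central limit theorem (Theorem \ref{tCLT}) by slicing a fixed interval into many small increments. Fix $t>0$ and $a,b\geq0$. First I would set up, for each $n\in\mathbb{N}$, the i.i.d. sequence of increments obtained from the $G$-Brownian motion on a fine mesh: put $\xi_i^{(n)} = B_{it/n}-B_{(i-1)t/n}$ for $i=1,\dots,n$. By property (ii) in the definition of $G$-Brownian motion, $\xi_{i+1}^{(n)}\overset{d}{=}\xi_i^{(n)}$ and $\xi_{i+1}^{(n)}$ is independent of $(\xi_1^{(n)},\dots,\xi_i^{(n)})$, and by hypothesis $\mathbb{E}[\xi_1^{(n)}]=\mathbb{E}[-\xi_1^{(n)}]=0$. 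Also $B_t = \sum_{i=1}^n \xi_i^{(n)}$.

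The next step is to rescale so that Theorem \ref{tCLT} applies. Write $\bar S_n = \sqrt{n/t}\,\sum_{i=1}^n \xi_i^{(n)} = \sqrt{n/t}\,B_t$; up to the deterministic factor $\sqrt{t}$ this is exactly the normalized sum $(X_1+\cdots+X_n)/\sqrt n$ with $X_i = \sqrt{1/t}\,\xi_i^{(n)}$. However, one must be careful: in Theorem \ref{tCLT} the summands form a single i.i.d. sequence not depending on $n$, whereas here the mesh changes with $n$. The clean way around this is not to let $n\to\infty$ at all for a fixed process, but rather to invoke Proposition \ref{Prop-Gnorm copy(1)}: it suffices to show that $B_t$ is $G$-normal distributed for the appropriate $G$, and then the stability identity \eqref{e311} is automatic. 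So I would instead argue directly that $B_t$ satisfies \eqref{e311}, i.e. $aB_t + b(B_{2t}-B_t)\overset{d}{=}\sqrt{a^2+b^2}\,B_t$, using self-similarity-type reasoning internal to the process.

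Here is the cleaner route I would actually carry out. For $m,k\in\mathbb{N}$, consider the increments $\eta_j = B_{jt/k}-B_{(j-1)t/k}$, $j=1,\dots,k$: these are i.i.d. (in the sublinear sense) with zero mean, and $B_t=\sum_{j=1}^k\eta_j$. Applying the central limit theorem, Theorem \ref{tCLT}, to the i.i.d. sequence generated by repeatedly sampling an independent copy of a single increment $\eta$ with the law of $B_{t/k}$: as $k$ is held, I form a genuinely infinite i.i.d. sequence $\{Y_i\}$ with $Y_1\overset{d}{=}B_{t/k}$ (extending the space by product construction, Definition \ref{dd1}), so $(Y_1+\cdots+Y_n)/\sqrt n$ converges in law to a $G$-normal $Z$ with $G(A)=\tfrac1{2}\mathbb{E}[\langle AY_1,Y_1\rangle]$. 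On the other hand, by the independence and stationarity of $G$-Brownian increments, $(Y_1+\cdots+Y_n)$ has the same law as $B_{nt/k}$ scaled appropriately; the point is to identify $B_t\overset{d}{=}\sqrt{k}\cdot(Y_1+\cdots+Y_k)/\sqrt{k}$ and push $k\to\infty$ using property (iii), $\lim_{t\to0}\mathbb{E}[|B_t|^3]/t=0$, to control the convergence. Property (iii) is precisely the Lindeberg-type condition needed so that the limit is nondegenerate $G$-normal rather than something wilder.

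The main obstacle, and where I would spend the most care, is the bookkeeping that matches the finite sum $B_t=\sum_{j=1}^k\eta_j$ to the CLT limit while keeping track of scaling: one needs to check that $\mathbb{E}[|\eta_1|^3]$ is of order $(t/k)\cdot o(1)$ so that the third-moment condition of Theorem \ref{tCLT} (or the hypothesis (iii)) is verified for the rescaled summands $\sqrt{k/t}\,\eta_j$ uniformly, and that the limiting $G$ does not depend on $k$. Once $B_t$ is shown to be $G$-normal distributed, the desired identity $aB_t+b(B_{2t}-B_t)\overset{d}{=}\sqrt{a^2+b^2}\,B_t$ follows immediately from \eqref{e311} together with the fact that $B_{2t}-B_t\overset{d}{=}B_t$ and is independent of $B_t$ (property (ii)), so that $B_{2t}-B_t$ is an independent copy of $B_t$ in the sense required by the definition of $G$-normal distribution. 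I would close by remarking that uniqueness in Proposition \ref{Prop-Gnorm copy(1)} then pins down the law of $B_t$ in terms of $G$.
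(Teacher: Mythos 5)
Your reduction of the displayed identity to the statement ``$B_{t}$ is $G$-normal distributed'' is sound: once that is known, property (ii) makes $B_{2t}-B_{t}$ an independent copy of $B_{t}$, the joint distribution of the pair is determined by the marginals and the direction of independence, and the identity is literally \textup{(\ref{e311})}. The gap is in how you propose to establish $G$-normality of $B_{t}$. Theorem \ref{tCLT} concerns a single fixed i.i.d.\ sequence $\{Y_{i}\}$ and asserts convergence of $(Y_{1}+\cdots+Y_{n})/\sqrt{n}$ as $n\rightarrow\infty$; it gives no information whatsoever about the law of a finite partial sum. Your identification $B_{t}\overset{d}{=}\sqrt{k}\cdot(Y_{1}+\cdots+Y_{k})/\sqrt{k}$ with $Y_{1}\overset{d}{=}B_{t/k}$ only exhibits $B_{t}/\sqrt{k}$ as the $k$-th term of a sequence whose limit (in the number of summands, with $k$ held fixed) is $G$-normal; since the underlying sequence changes with $k$, the step ``push $k\rightarrow\infty$'' is exactly the triangular-array convergence that you correctly identified as the obstruction in your second paragraph and then reintroduced. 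To close the argument along these lines you would need a quantitative, Berry--Esseen-type sublinear CLT of the form $|\mathbb{E}[\varphi(\sum_{j\leq k}\eta_{j})]-\mathbb{E}[\varphi(Z)]|\leq C\sum_{j\leq k}\mathbb{E}[|\eta_{j}|^{3}]\rightarrow0$, which Theorem \ref{tCLT} does not provide (note that it is stated with no third-moment hypothesis at all, so condition (iii) cannot enter the proof through it). You would also have to verify separately that $s\mapsto\mathbb{E}[\left\langle AB_{s},B_{s}\right\rangle ]$ is additive in $s$ --- via Proposition \ref{Prop-X+Y} and the zero-mean hypothesis --- so that the candidate $G$ does not depend on $k$.

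The machinery the paper actually supplies for this theorem is the PDE route of Appendix A, which is the proof in [Peng2008] and [Peng2010]. Set $u(s,x):=\mathbb{E}[\varphi(x+B_{s})]$; stationarity and independence of increments give the dynamic programming relation $u(s+h,x)=\mathbb{E}[u(s,x+B_{h})]$, and condition (iii) is used to Taylor-expand $u(s,x+B_{h})$ in $B_{h}$ and discard the third-order remainder, showing that $u$ is the viscosity solution of $\partial_{s}u=G(D^{2}u)$, $u|_{s=0}=\varphi$, with $G(A)=\lim_{h\downarrow0}\frac{1}{2h}\mathbb{E}[\left\langle AB_{h},B_{h}\right\rangle ]$. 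Comparison with $\tilde{u}(s,x)=\mathbb{E}[\varphi(x+\sqrt{s}X)]$ for a $G$-normal $X$ (Proposition \ref{gG-P1 copy(1)}) and uniqueness of viscosity solutions yield $B_{t}\overset{d}{=}\sqrt{t}X$, hence $B_{t}$ is $G$-normal, after which your final paragraph completes the proof. Your instinct that (iii) is a Lindeberg-type condition is correct, but it does its work inside this PDE argument (or inside a triangular-array CLT that you would have to prove from scratch), not inside Theorem \ref{tCLT} as stated.
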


Using a generalization of Kolmogorov approach, a new type of Markovian
processes was introduced in [Peng2004] and then $G$-Brownian motion in [Peng2007].

In this section we will use this approach to give a general construction of
stochastic processes so that it can be also applied to construct a new type of
Gaussian processes under a sublinear expectation.

We first notice that, just as in the classical situation, one can define the
family of finite dimensional distributions for a given stochastic process
$X=(X_{t})_{t\geq0}$. We denote%

\[
\mathcal{T}:=\{ \underline{t}=(t_{1},\ldots,t_{m}):\forall m\in \mathbb{N}%
,t_{i}\in[0,\infty),\, t_{i}\not =t_{j}\, \, 0\leq i,j\leq m, i\not =j \}.
\]

\begin{definition}
\label{Def3.4}If for, each $\underline{t}=(t_{1},\ldots,t_{m})\in \mathcal{T}$,
$\mathbb{F}_{\underline{t}}$ is a nonlinear expectation defined on
$(\mathbb{R}^{m\times d},C_{l.Lip}(\mathbb{R}^{m\times d}))$ then we call
$(\mathbb{F}_{\underline{t}})_{\underline{t}\in \mathcal{T}}$ a family of
finite dimensional distributions on $\mathbb{R}^{d}$.
\end{definition}

\begin{definition}
Let $(X_{t})_{t\geq0}$ be an $\mathbb{R}^{d}$-valued stochastic process
defined in a nonlinear expectation space $(\Omega,\mathcal{H},\mathbb{E})$.
For each $\underline{t}=(t_{1},\cdots,t_{n})\in \mathcal{T}$ the random
variable $X_{\underline{t}}=(X_{t_{1}},\cdots,X_{t_{n}})$ induces a
distribution distribution $\mathbb{F}_{\underline{t}}^{X}[\varphi
]=\mathbb{E}[\varphi(X_{\underline{t}})]$, $\varphi \in C_{l.Lip}%
(\mathbb{R}^{n\times d})$. We call $(\mathbb{F}_{\underline{t}}^{X}%
)_{\underline{t}\in \mathcal{T}}$ the family of finite dimensional
distributions corresponding to $(X_{t})_{t\geq0}$.
\end{definition}

\begin{definition}
Let $(X_{t})_{t\geq0}$ and $(\bar{X}_{t})_{t\geq0}$ be $d$-dimensional
\textbf{stochastic processes }defined respectively in nonlinear expectation
spaces $(\Omega,\mathcal{H},\mathbb{E})$ and $(\bar{\Omega},\mathcal{\bar{H}%
},\mathbb{{\bar{E}})}$. $X$ and $\bar{X}$ are said to be indentically
distributed if their families of finite dimensional distributions coincide
from each other:%
\[
\mathbb{F}_{\underline{t}}^{X}=\mathbb{F}_{\underline{t}}^{\bar{X}%
},\  \  \forall \underline{t}\in \mathcal{T}.
\]

\end{definition}

For a given stochastic process, the family of its finite dimensional
distributions satisfies the following properties of consistency:

\begin{definition}
\label{Def3.7}A family of finite dimentional distributions $\{ \mathbb{F}%
_{\underline{t}}[\varphi]\}_{\underline{t}\in \mathcal{T}}$ on $\mathbb{R}^{d}$
is said to be consistent if, for each $\underline{t}=(t_{1},\cdots,t_{n}%
)\in \mathcal{T}$, we have\newline(i)
\[
\mathbb{F}_{\underline{t}}[\varphi]=\mathbb{F}_{\underline{t}}[\varphi
],\  \  \varphi \in C_{l.Lip}(\mathbb{R}^{n\times d}).
\]
Here, on the right hand side, $\varphi$ is considered as a funtion defined on
$\mathbb{R}^{n\times d}\times \mathbb{R}^{d}$ which does not depend on the last
coordinate. \newline(ii) For each permutation $\sigma$ of $(1,2,\cdots,n)$
\[
\mathbb{F}_{t_{\sigma(1)},\cdots,t_{\sigma(n)}}[\varphi]=\mathbb{F}%
_{t_{1},\cdots,t_{n}}[\varphi_{\sigma}]
\]
where%
\[
\varphi_{\sigma}(x_{1},\cdots,x_{n})=\varphi(x_{\sigma(1)},\cdots
,x_{\sigma(n)}),\  \ x_{i}\in \mathbb{R}^{d},\  \ i=1,\cdots,n.
\]

\end{definition}

It is clear that the finite dimensional distributions of the process $X$ is
consistent. Inversely, we have:

\begin{theorem}
\label{Thm8.8}Let $(\mathbb{F}_{\underline{t}})_{\underline{t}\in \mathcal{T}}$
be a family of consistent nonlinear distributions. Then there exists a
$d$-dimensional stochactic process \ $(X_{t})_{t\geq0}$ defined on a nonlinear
expectation space $(\Omega,\mathcal{H},\mathbb{E})$ such that the family of
finite dimensional distributions of $X$ coincides with $(\mathbb{F}%
_{\underline{t}})_{\underline{t}\in \mathcal{T}}$. $\mathbb{E}$ can be a
sublinear (resp. linear) expectation if the distributions in $(\mathbb{F}%
_{\underline{t}})_{\underline{t}\in \mathcal{T}}$ are all sublinear (resp. linear).
\end{theorem}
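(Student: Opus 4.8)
The plan is to mimic Kolmogorov's canonical construction, taking the path space as the sample space and using the consistent family $(\mathbb{F}_{\underline{t}})$ to define a nonlinear expectation on cylinder functions. Concretely, set $\Omega = (\mathbb{R}^d)^{[0,\infty)}$, the space of all $\mathbb{R}^d$-valued functions on $[0,\infty)$, and let $X_t(\omega) = \omega(t)$ be the coordinate process. Take $\mathcal{H}$ to be the space of cylinder functions: those of the form $\xi(\omega) = \varphi(X_{t_1}(\omega),\ldots,X_{t_n}(\omega))$ with $n\in\mathbb{N}$, $\underline{t}=(t_1,\ldots,t_n)\in\mathcal{T}$, and $\varphi\in C_{l.Lip}(\mathbb{R}^{n\times d})$. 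One checks $\mathcal{H}$ is a linear space containing constants and closed under composition with $C_{l.Lip}$ functions of finitely many of its members (since combining two cylinder functions just enlarges the index set, possibly after reindexing and padding with dummy coordinates). Then define
\[
\mathbb{E}[\xi] := \mathbb{F}_{\underline{t}}[\varphi] \quad \text{when } \xi = \varphi(X_{t_1},\ldots,X_{t_n}).
\]

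The main obstacle is \emph{well-definedness}: a single cylinder function $\xi$ admits many representations — one can add redundant time points $t_{n+1}$ with a test function not depending on the new coordinate, or permute the time points (with the test function permuted accordingly). I would show that the value $\mathbb{F}_{\underline{t}}[\varphi]$ is unchanged under exactly these two operations, and this is precisely what consistency conditions (i) and (ii) of Definition \ref{Def3.7} guarantee. To handle a general pair of representations $\varphi(X_{t_1},\ldots,X_{t_n}) = \psi(X_{s_1},\ldots,X_{s_m})$, I would pass to the common refinement: let $\{u_1,\ldots,u_k\}$ be the union of the two (distinct) time sets, written in some fixed order; using (i) repeatedly, lift both $\mathbb{F}_{\underline{t}}[\varphi]$ and $\mathbb{F}_{\underline{s}}[\psi]$ to $\mathbb{F}_{\underline{u}}[\cdot]$ applied to the corresponding functions on $\mathbb{R}^{k\times d}$, using (ii) to reconcile the orderings; the two lifted functions on $\mathbb{R}^{k\times d}$ then agree pointwise (since they agree as functions of $\omega$ and the coordinates $X_{u_1},\ldots,X_{u_k}$ can be chosen freely), hence the two values coincide. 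This establishes that $\mathbb{E}$ is a well-defined functional on $\mathcal{H}$.

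Next I would verify that $\mathbb{E}$ inherits the structural properties. Monotonicity and constant-preserving: if $\xi\geq\eta$ pointwise on $\Omega$, then after writing both over a common index set $\underline{u}$ as $\varphi(X_{\underline{u}})$ and $\psi(X_{\underline{u}})$, we have $\varphi\geq\psi$ everywhere on $\mathbb{R}^{k\times d}$ (again because the coordinate values are unconstrained), so monotonicity of the nonlinear expectation $\mathbb{F}_{\underline{u}}$ gives $\mathbb{E}[\xi]\geq\mathbb{E}[\eta]$; constants are the cylinder functions with constant $\varphi$, and $\mathbb{F}_{\underline{u}}$ preserves constants by Definition \ref{Def3.4} (each $\mathbb{F}_{\underline{t}}$ is a nonlinear expectation). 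For the sublinear (resp. linear) case: given $\xi,\eta\in\mathcal{H}$, write both over a common index set $\underline{u}$, so $\xi+\eta = (\varphi+\psi)(X_{\underline{u}})$ and $\lambda\xi = (\lambda\varphi)(X_{\underline{u}})$; then sub-additivity and positive homogeneity (resp. linearity) of $\mathbb{F}_{\underline{u}}$ transfer directly to $\mathbb{E}$. Thus $(\Omega,\mathcal{H},\mathbb{E})$ is a nonlinear expectation space, sublinear or linear according to the hypothesis.

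Finally, I would identify the finite dimensional distributions of the coordinate process $X=(X_t)_{t\geq0}$ with the prescribed family. For $\underline{t}=(t_1,\ldots,t_n)\in\mathcal{T}$ and $\varphi\in C_{l.Lip}(\mathbb{R}^{n\times d})$, by construction
\[
\mathbb{F}^X_{\underline{t}}[\varphi] = \mathbb{E}[\varphi(X_{t_1},\ldots,X_{t_n})] = \mathbb{F}_{\underline{t}}[\varphi],
\]
which is exactly the assertion $\mathbb{F}^X_{\underline{t}} = \mathbb{F}_{\underline{t}}$ for all $\underline{t}\in\mathcal{T}$. This completes the construction. I expect the only genuinely delicate point to be the bookkeeping in the well-definedness argument — reindexing, padding with dummy variables, and applying (i) and (ii) in the right order — while everything else is a routine transfer of properties through the common-refinement device; no measure-theoretic or topological input is needed since $\mathcal{H}$ consists only of cylinder functions and no continuity of paths is claimed here.
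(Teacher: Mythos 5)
Your proposal is correct and follows essentially the same route as the paper: the canonical path space $\Omega=(\mathbb{R}^d)^{[0,\infty)}$, the space $L_{ip}(\Omega)$ of Lipschitzian cylinder functions, and the definition $\mathbb{E}[\varphi(X_{t_1},\dots,X_{t_n})]:=\mathbb{F}_{\underline{t}}[\varphi]$. The paper simply asserts that this is ``consistently defined,'' whereas you spell out the well-definedness via the common-refinement argument and the transfer of monotonicity, sub-additivity and positive homogeneity --- a useful elaboration, but not a different proof.
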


\begin{proof}
Let$\  \Omega=(\mathbb{R}^{d})^{[0,\infty)}$ denote the space of all
$\mathbb{R}^{d}$-valued functions $(\omega_{t})_{t\in \mathbb{R}^{+}}$. We
denote by $\overline{X}_{t}(\omega)=\omega_{t}$, $t\in \lbrack0,\infty)$,
$\omega \in \Omega$, the corresponding canonical process. The space of
Lipschitzian cylinder functions on $\Omega$ is denoted by%
\[
\mathcal{H}=L_{ip}(\Omega):=\{ \varphi(X_{t_{1}},\cdots,X_{t_{n}}%
),\underline{t}=(t_{1},\cdots,t_{n})\in \mathcal{T},\forall \varphi \in
C_{l.Lip}(\mathbb{R}^{d\times n})\}.
\]

It is clear that $L_{ip}(\Omega)$ is a vector lattice. For each $\xi \in
L_{ip}(\Omega)$ of the form $\xi(\omega)=\varphi(X_{t_{1}}(\omega
),\cdots,X_{t_{n}}(\omega))$, we set
\[
\mathbb{E}[\varphi(\xi)]=\mathbb{F}_{t_{1},\cdots,t_{n}}[\varphi].
\]
It is clear that the mapping $\mathbb{E}:L_{ip}(\Omega)\mapsto \mathbb{R}$
forms a consistently defined nonlinear expectation on $(\Omega,L_{ip}%
(\Omega))$ and the family of the finite dimensional distributions of $X$ is
$(\mathbb{F}_{\underline{t}})_{\underline{t}\in \mathcal{T}}$.

We see that with this construction $\mathbb{E}$ is sublinear (resp. linear)
expectation if the distributions in $(\mathbb{F}_{\underline{t}}%
)_{\underline{t}\in \mathcal{T}}$ are all sublinear (resp. linear).
\end{proof}

\begin{remark}
In the proof of Theorem \ref{Thm8.8}, we can also use $\Omega=C(0,\infty
;\mathbb{R}^{d})$ in the place of $\bar{\Omega}=(\mathbb{R}^{d})^{[0,\infty)}%
$. But we will need the later one in the proof of Lemma \ref{Lemma3.12}. In
many situations, similar to the classical situation, we need to introduce the
natural capacity $\hat{c}$ associated to $\mathbb{E}$ and use the related
\textquotedblleft$\hat{c}$-`quasi sure' analysis\textquotedblright \ to study
the continuity of the process $X$. We refer to [Denis-Hu-Peng2010] or
[Hu-Peng2010] for the proof of the continuity of a $G$-Brownian motion.
\end{remark}

\begin{remark}
Definitions \ref{Def3.4}-\ref{Def3.7} as well as Theorem \ref{Thm8.8} can be
extended to construct a $\mathbb{R}^{d}$-valued nonlinear expectation. We will
see in Section 6 a typical $\mathbb{C}$-valued expectation.
\end{remark}

We will prove that when $(\mathbb{F}_{\underline{t}})_{\underline{t}%
\in \mathcal{T}}$ is sublinear, then the corresponding sublinear expectation
$\mathbb{E}$ constructed above is an upper expectation of a family of
probability measures on $(\bar{\Omega},\bar{\mathcal{F}})$. We need the
following lemmas:

\begin{lemma}
\label{le3}Let $\xi \in \mathcal{H}^{m}$ be a given random vector in a linear
expectation space $(\Omega,\mathcal{H},E)$ such that $\varphi(\xi
)\in \mathcal{H}$ for each $\varphi \in C_{l.Lip}(\mathbb{R}^{m})$. Then there
exists a unique probability measure $Q$ on $(\mathbb{R}^{m},\mathcal{B}%
(\mathbb{R}^{m}))$ such that $E_{Q}[\varphi]=E[\varphi(\xi)]$, $\varphi \in
C_{l.Lip}(\mathbb{R}^{m})$.
\end{lemma}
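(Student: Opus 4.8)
The plan is to apply the classical Daniell–Stone representation theorem to the positive linear functional $\varphi \mapsto E[\varphi(\xi)]$ on a suitable function lattice on $\mathbb{R}^m$. First I would observe that the map $I : C_{l.Lip}(\mathbb{R}^m) \to \mathbb{R}$ defined by $I[\varphi] := E[\varphi(\xi)]$ is well-defined by hypothesis (since $\varphi(\xi) \in \mathcal{H}$), and it is linear because $E$ is linear and $\varphi \mapsto \varphi(\xi)$ is linear, positive because $\varphi \geq 0$ implies $\varphi(\xi) \geq 0$ and hence $E[\varphi(\xi)] \geq 0$ by monotonicity, and normalized because $I[1] = E[1] = 1$ by constant preservation. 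The lattice property is inherited too: $C_{l.Lip}(\mathbb{R}^m)$ is closed under $\vee$ and $\wedge$ and contains constants, so it is a Stone vector lattice.

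The key step is verifying the continuity (Daniell) condition: if $\varphi_n \in C_{l.Lip}(\mathbb{R}^m)$ with $\varphi_n \downarrow 0$ pointwise, then $I[\varphi_n] \downarrow 0$. The standard way to get this is a tightness-type estimate. I would fix $\varphi_1$, which is bounded by some $C(1+|x|^m)$ growth (actually by a constant times a polynomial), and for $R > 0$ choose a cutoff $\chi_R \in C_{l.Lip}(\mathbb{R}^m)$ with $\mathbf{1}_{|x| \geq R} \leq \chi_R \leq \mathbf{1}_{|x| \geq R-1}$. On the ball $\{|x| \leq R\}$, Dini's theorem gives uniform convergence $\varphi_n \to 0$, while outside we control $\varphi_n \leq \varphi_1 \leq \varphi_1 \chi_R + (\text{uniform part})$; the term $E[\varphi_1(\xi)\chi_R(\xi)]$ must be shown small for large $R$. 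This is where I need that $E[|\xi|^k] = E[ (|{\rm id}|^k)(\xi) ] < \infty$ for all $k$ — which holds because $|x|^k \in C_{l.Lip}$ after smoothing, so $|\xi|^k \in \mathcal{H}$ and has finite $E$ — giving $E[\varphi_1(\xi) \chi_R(\xi)] \leq E[\varphi_1(\xi) \mathbf{1}_{|\xi|\geq R-1}] \to 0$ by a Chebyshev/dominated-convergence argument at the level of the not-yet-constructed measure; more carefully, one bounds $\varphi_1(\xi)\chi_R(\xi) \leq C(1+|\xi|^{m+1})/(R-1)$ pointwise for a suitable bound, so $I[\varphi_1 \chi_R] \to 0$ as $R \to \infty$. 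Combining, $\limsup_n I[\varphi_n] \leq \varepsilon$ for every $\varepsilon$, hence $I[\varphi_n] \to 0$.

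Once the Daniell condition holds, the Daniell–Stone theorem produces a unique measure $Q$ on the $\sigma$-algebra generated by $C_{l.Lip}(\mathbb{R}^m)$, which is exactly $\mathcal{B}(\mathbb{R}^m)$, with $\int \varphi \, dQ = I[\varphi] = E[\varphi(\xi)]$ for all $\varphi \in C_{l.Lip}(\mathbb{R}^m)$; taking $\varphi$ a smooth approximation of the constant $1$ shows $Q(\mathbb{R}^m) = 1$, so $Q$ is a probability measure. Uniqueness is immediate: two Borel probability measures on $\mathbb{R}^m$ agreeing on all functions in $C_{l.Lip}(\mathbb{R}^m)$ (which contains, after scaling, an approximating sequence for every indicator of a closed ball, hence separates measures) must coincide — alternatively one invokes that $C_{l.Lip}$ is measure-determining on $\mathbb{R}^m$, a standard fact.

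I expect the main obstacle to be the Daniell continuity condition, specifically making rigorous the "mass escaping to infinity is negligible" estimate using only finite-additivity of $E$ on $\mathcal{H}$ — one must phrase everything purely in terms of inequalities between elements of $C_{l.Lip}(\mathbb{R}^m)$ evaluated at $\xi$ and then apply monotonicity of $E$, rather than prematurely invoking a measure. The truncation functions $\chi_R$ and the polynomial growth control of the monotone sequence $(\varphi_n)$ (dominated by $\varphi_1$) are the technical heart; everything else (linearity, positivity, lattice structure, uniqueness) is routine bookkeeping.
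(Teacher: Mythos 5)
Your proposal is correct and follows essentially the same route as the paper: verify the Daniell continuity condition for $\varphi\mapsto E[\varphi(\xi)]$ by combining Dini's theorem on a large ball with a tail estimate of the form $\varphi_1(x)\mathbf{1}_{\{|x|>N\}}\leq \varphi_1(x)|x|/N$ (your cutoff $\chi_R$ plays the same role), using that $\varphi_1(x)|x|\in C_{l.Lip}(\mathbb{R}^m)$ so the corresponding moment is finite, and then invoke the Daniell--Stone theorem for existence and uniqueness of $Q$.
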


\begin{proof}
Let $\{ \varphi_{n}\}_{n=1}^{\infty}$ be a sequence in $C_{l.Lip}%
(\mathbb{R}^{m})$ satisfying $\varphi_{n}\downarrow0$. For each $N>0$, it is
clear that
\[
\varphi_{n}(x)\leq k_{n}^{N}+\varphi_{1}(x)I_{[|x|>N]}\leq k_{n}^{N}%
+\frac{\varphi_{1}(x)|x|}{N}\text{\ for each }x\in \mathbb{R}^{d\times m},
\]
where $k_{n}^{N}=\max_{|x|\leq N}\varphi_{n}(x)$. Noting that $\varphi
_{1}(x)|x|\in C_{l.Lip}(\mathbb{R}^{m})$, we have
\[
\mathbb{E}[\varphi_{n}(\xi)]\leq k_{n}^{N}+\frac{1}{N}\mathbb{E}[\varphi
_{1}(\xi)|\xi|].
\]
It follows from $\varphi_{n}\downarrow0$ that $k_{n}^{N}\downarrow0$. Thus we
have $\lim_{n\rightarrow \infty}\mathbb{E}[\varphi_{n}(\xi)]\leq \frac{1}%
{N}\mathbb{E}[\varphi_{1}(\xi)|\xi|]$. Since $N$ can be arbitrarily large, we
get $\mathbb{E}[\varphi_{n}(\xi)]\downarrow0$. Consequently, $E[\varphi
_{n}(\xi)]\downarrow0$.

It follows from Daniell-Stone's theorem that there exists a unique probability
measure $Q$ on $(\mathbb{R}^{m},\mathcal{B}(\mathbb{R}^{m}))$ such that
$E_{Q}[\varphi]=E[\varphi(X)]$, for each $\varphi \in C_{l.Lip}(\mathbb{R}%
^{m})$.
\end{proof}

\bigskip

\begin{lemma}
\label{Lemma3.12} \label{le4 copy(1)} Let $\mathbb{E}$ be a sublinear
expectation on $(\Omega,L_{ip}(\Omega))$ and $E$ be a (finitely additive)
linear expectation on $(\Omega,L_{ip}(\Omega))$ which is dominated by
$\mathbb{E}.$ Then there exists a unique probability measure $Q$ on
$(\Omega,\sigma(L_{ip}(\Omega))$ such that $E[X]=E_{Q}[\xi]$ for each $\xi \in
L_{ip}(\Omega)$.
\end{lemma}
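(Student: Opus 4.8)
The plan is to reduce Lemma \ref{Lemma3.12} to the finite-dimensional situation already handled by Lemma \ref{le3}, and then to patch the resulting family of finite-dimensional laws together into a single $\sigma$-additive measure on $(\Omega,\sigma(L_{ip}(\Omega)))$ using Kolmogorov's extension theorem. The key observation is that, precisely because $\Omega = (\mathbb{R}^d)^{[0,\infty)}$ is a product space and $L_{ip}(\Omega)$ consists of cylinder functions, a cylinder $\sigma$-field is exactly where Kolmogorov's theorem applies.

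First I would fix, for each $\underline{t}=(t_1,\dots,t_n)\in\mathcal T$, the coordinate vector $\overline X_{\underline t}=(\overline X_{t_1},\dots,\overline X_{t_n})$, which takes values in $\mathbb R^{d\times n}$, and note that for every $\varphi\in C_{l.Lip}(\mathbb R^{d\times n})$ the random variable $\varphi(\overline X_{\underline t})$ lies in $L_{ip}(\Omega)$. Restricting the given finitely additive linear expectation $E$ to such random variables defines a linear functional $\varphi\mapsto E[\varphi(\overline X_{\underline t})]$ on $C_{l.Lip}(\mathbb R^{d\times n})$. Since $E$ is dominated by the sublinear expectation $\mathbb E$, it is in particular monotone and constant-preserving (one gets $E[1]=1$ and $E[X]\le\mathbb E[X]$, $-E[X]=E[-X]\le\mathbb E[-X]$), so the hypotheses of Lemma \ref{le3} are met with the linear expectation space taken to be $(\Omega,L_{ip}(\Omega),E)$ and $\xi=\overline X_{\underline t}$. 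Lemma \ref{le3} then yields a unique Borel probability measure $Q_{\underline t}$ on $\mathbb R^{d\times n}$ with $E_{Q_{\underline t}}[\varphi]=E[\varphi(\overline X_{\underline t})]$ for all $\varphi\in C_{l.Lip}(\mathbb R^{d\times n})$.

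Next I would check the Kolmogorov consistency conditions for the family $\{Q_{\underline t}\}_{\underline t\in\mathcal T}$: invariance under permutation of the indices and compatibility under adding/deleting a coordinate. Both follow directly from the corresponding identities for $E[\varphi(\overline X_{\underline t})]$, which are trivial because $\overline X_{t_i}$ is literally the $i$-th coordinate projection, so reindexing or forgetting a time point is just composing $\varphi$ with the appropriate linear map on Euclidean space; since measures determined on $C_{l.Lip}$ are determined (this class is measure-determining on $\mathbb R^k$), the measures inherit these relations. Kolmogorov's extension theorem then produces a unique probability measure $Q$ on the cylinder $\sigma$-field $\sigma(L_{ip}(\Omega))=\bigvee_{t}\sigma(\overline X_t)$ whose finite-dimensional marginals are the $Q_{\underline t}$. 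For $\xi=\varphi(\overline X_{\underline t})\in L_{ip}(\Omega)$ we then have $E_Q[\xi]=E_{Q_{\underline t}}[\varphi]=E[\xi]$, which is the asserted identity; uniqueness of $Q$ is immediate since $L_{ip}(\Omega)$ generates the $\sigma$-field and determines the finite-dimensional marginals, which in turn determine $Q$.

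The step I expect to be the main obstacle — really the only nontrivial point — is verifying that the set function obtained from the $Q_{\underline t}$ on cylinder sets is countably additive, i.e., that Kolmogorov's theorem genuinely applies here. In the purely set-theoretic (non-topological) formulation of Kolmogorov's theorem this needs no regularity assumption, but one still must ensure each $Q_{\underline t}$ is a bona fide $\sigma$-additive Borel measure; that is exactly what Lemma \ref{le3} (via Daniell--Stone) guarantees, using the domination by $\mathbb E$ to push the $\varphi_n\downarrow 0 \Rightarrow E[\varphi_n(\xi)]\downarrow 0$ argument through. Once that is in hand, the consistency check and the final identification of $E_Q$ with $E$ on $L_{ip}(\Omega)$ are routine. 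I would close by remarking that this is where the choice $\Omega=(\mathbb R^d)^{[0,\infty)}$ (rather than $C(0,\infty;\mathbb R^d)$) is used, as promised in the remark after Theorem \ref{Thm8.8}.
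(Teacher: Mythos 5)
Your proposal is correct and follows essentially the same route as the paper's own proof: apply Lemma \ref{le3} to each coordinate vector $(\overline X_{t_1},\dots,\overline X_{t_n})$ to obtain the finite-dimensional laws $Q_{\underline t}$, check Kolmogorov consistency, extend to a measure $Q$ on $\sigma(L_{ip}(\Omega))$ by Kolmogorov's theorem, and deduce uniqueness from the fact that the finite-dimensional marginals determine $Q$ on the generated $\sigma$-field. Your additional remarks on where $\sigma$-additivity enters (via Daniell--Stone inside Lemma \ref{le3}) and on the role of the product space $\Omega=(\mathbb{R}^d)^{[0,\infty)}$ are consistent with the paper's intent.
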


\begin{proof}
For each fixed $\underline{t}=(t_{1},\ldots,t_{m})\in \mathcal{T}$, we denote
$X=(X_{t_{1}},\cdots,X_{t_{m}})$. by Lemma \ref{le3} there exists a unique
probability measure $Q_{\underline{t}}$ on $(\mathbb{R}^{d\times
m},\mathcal{B}(\mathbb{R}^{d\times m}))$ such that $E_{Q_{\underline{t}}%
}[\varphi]=E[\varphi(X_{t_{1}},\cdots,X_{t_{m}})]$ for each $\varphi \in
C_{l.Lip}(\mathbb{R}^{d\times m})$. Thus, corresponding to $(X_{t_{1}}%
,\cdots,X_{t_{m}})$, $\underline{t}=(t_{1},\ldots,t_{m})\in \mathcal{T}$, we
get a family of finite dimensional distributions $\{Q_{\underline{t}%
}:\underline{t}\in \mathcal{T}\}$. It is easy to check that $\{Q_{\underline
{t}}:\underline{t}\in \mathcal{T}\}$ is consistent. Then by Kolmogorov's
consistent theorem, there exists a probability measure $Q$ on $(\Omega
,\sigma(L_{ip}(\Omega))$ such that $\{Q_{\underline{t}}:\underline{t}%
\in \mathcal{T}\}$ is the finite dimensional distributions of $Q$. Now assume
that there exists another probability measure $\bar{Q}$ satisfying the
condition, by Daniell-Stone's theorem, $Q$ and $\bar{Q}$ have the same
finite-dimensional distributions. Then by monotone class theorem, $Q=\bar{Q}$.
The proof is complete.
\end{proof}

\begin{lemma}
\label{le5} There exists a family of probability measures $\mathcal{P}_{e}$ on
$(\Omega,\sigma(\Omega))$ such that
\[
\mathbb{\bar{E}}[X]=\max_{Q\in \mathcal{P}_{e}}E_{Q}[X],\quad \text{for}\ X\in
L_{ip}(\Omega).
\]

\end{lemma}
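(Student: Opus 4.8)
The goal is to realize the sublinear expectation $\mathbb{\bar{E}}$ constructed on $(\Omega, L_{ip}(\Omega))$ (with $\Omega = (\mathbb{R}^d)^{[0,\infty)}$) as an upper expectation over a family of $\sigma$-additive probability measures. The natural strategy is: first apply Theorem~\ref{t1} to write $\mathbb{\bar{E}}$ as a supremum of (finitely additive) linear expectations that are each dominated by $\mathbb{\bar{E}}$; then apply Lemma~\ref{Lemma3.12} to upgrade each such finitely additive linear expectation to a genuine $\sigma$-additive probability measure on $(\Omega, \sigma(L_{ip}(\Omega)))$; finally check that the supremum over the resulting family of measures reproduces $\mathbb{\bar{E}}$ and is in fact attained, i.e. is a maximum.

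\textbf{Steps in order.} First I would invoke Theorem~\ref{t1}: since $\mathbb{\bar{E}}$ is a sublinear expectation on the linear space $L_{ip}(\Omega)$, there is a family $\{E_\theta : \theta \in \Theta\}$ of linear expectations on $L_{ip}(\Omega)$ with $\mathbb{\bar{E}}[X] = \sup_{\theta} E_\theta[X]$ for all $X \in L_{ip}(\Omega)$, and moreover for each fixed $X$ there is a $\theta_X$ attaining the supremum. The key observation is that each $E_\theta$ is \emph{dominated} by $\mathbb{\bar{E}}$: indeed $E_\theta[X] - E_\theta[Y] = E_\theta[X-Y] \le \mathbb{\bar{E}}[X-Y]$ by linearity of $E_\theta$ and the supremum representation. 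Second, for each $\theta$ I apply Lemma~\ref{Lemma3.12} to this dominated linear expectation $E_\theta$, obtaining a unique $\sigma$-additive probability measure $Q_\theta$ on $(\Omega, \sigma(L_{ip}(\Omega)))$ with $E_\theta[\xi] = E_{Q_\theta}[\xi]$ for all $\xi \in L_{ip}(\Omega)$. Set $\mathcal{P}_e := \{Q_\theta : \theta \in \Theta\}$. Third, I combine the pieces: for $X \in L_{ip}(\Omega)$,
\[
\mathbb{\bar{E}}[X] = \sup_{\theta \in \Theta} E_\theta[X] = \sup_{\theta \in \Theta} E_{Q_\theta}[X] = \sup_{Q \in \mathcal{P}_e} E_Q[X],
\]
and the supremum is attained at $\theta_X$, so it is a maximum, giving the stated identity $\mathbb{\bar{E}}[X] = \max_{Q \in \mathcal{P}_e} E_Q[X]$.

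\textbf{Main obstacle.} The delicate point is not any single step but making sure the hypotheses of Lemma~\ref{Lemma3.12} are genuinely met: one must verify that the $E_\theta$ produced by Theorem~\ref{t1} are finitely additive linear \emph{expectations} (constant-preserving and monotone, which follows from the last sentence of Theorem~\ref{t1} since $\mathbb{\bar{E}}$ is sublinear) and that they are dominated by $\mathbb{\bar{E}}$ on all of $L_{ip}(\Omega)$, not merely on cylinder functions over a fixed finite time set. Since $L_{ip}(\Omega)$ is exactly the space of Lipschitz cylinder functions, domination is automatic from the supremum representation, so this is a formality; the only thing to keep straight is that Lemma~\ref{Lemma3.12} was stated precisely for $\mathbb{E}$ sublinear on $(\Omega, L_{ip}(\Omega))$ and $E$ dominated by it, which is exactly our situation. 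A secondary point worth a remark is measurability: the family $\mathcal{P}_e$ lives on $\sigma(L_{ip}(\Omega))$, the cylinder $\sigma$-algebra, which is the appropriate choice here and matches $\bar{\mathcal{F}}$ in the preceding discussion. I expect the whole argument to be short, essentially a bookkeeping assembly of Theorem~\ref{t1} and Lemma~\ref{Lemma3.12}.
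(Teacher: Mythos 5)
Your proof is correct and follows exactly the route the paper takes: its own proof is the one-line remark that the result follows from the representation theorem of sublinear expectations (Theorem \ref{t1}) together with Lemma \ref{Lemma3.12}, which is precisely the assembly you carry out, including the domination check and the attainment of the supremum. No discrepancies to report.
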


\begin{proof}
By the representation theorem of sublinear expectation and Lemma
\ref{Lemma3.12}, it is easy to get the result.
\end{proof}

\  \  \

For this $\mathcal{P}_{e}$, we define the associated capacity:
\[
\tilde{c}(A):=\sup_{Q\in \mathcal{P}_{\! \!e}}Q(A),\quad A\in \mathcal{B}%
(\bar{\Omega}),
\]
and the upper expectation for each $\mathcal{B}(\bar{\Omega})$-measurable real
function $X$ which makes the following definition meaningful:
\[
\mathbb{\tilde{E}}[X]:=\sup_{Q\in \mathcal{P}_{\! \!e}}E_{Q}[X].
\]

\section{Gaussian processes in a sublinear expectation space}

In this section we generalize the notion of Gaussian processes to the
situation in sublinear expectation space.

\begin{definition}
An $\mathbb{R}^{d}$-valued stochastic process $X=(X_{t})_{t\geq0}$ defined in
a sublinear expectation space $(\Omega,\mathcal{H},\mathbb{E})$ is called a
Gaussian process if for each $\underline{t}=(t_{1},\cdots,t_{n})\in
\mathcal{T}$, $X_{\underline{t}}=(X_{t_{1}},\cdots,X_{t_{n}})$ is an
$\mathbb{R}^{n\times d}$-valued normally distributed random variable.
\end{definition}

In this section we are only concerned with the processes satisfying
$\mathbb{E}[X_{t}]=\mathbb{E}[-X_{t}]=0$.

\begin{definition}
Let $(X_{t})_{t\geq0}$ be a given $d$-dimensional stochastic process. We
denote, for each $\underline{t}=(t_{1},\cdots,t_{n})\in \mathcal{T}$,%
\[
G_{\underline{t}}^{X}(A):=\frac{1}{2}\mathbb{E[}\left \langle AX_{\underline
{t}},X_{\underline{t}}\right \rangle ]:A\in \mathbb{S}_{n\times d}.
\]
We called $(G_{\underline{t}}^{X})_{\underline{t}\in \mathcal{T}}$ the family
of 2nd moments of the process $X$. It is clear that for each $\underline
{t}=(t_{1},\cdots,t_{n})\in \mathcal{T}$, $G_{\underline{t}}^{X}:\mathbb{S}%
_{n\times d}\mapsto \mathbb{R}$ is a sublinear and monotone function.
\end{definition}

\begin{definition}
A family $G_{t_{1},\cdots,t_{n}}:\mathbb{S}_{n\times d}\mapsto \mathbb{R}$,
$\underline{t}=(t_{1},\cdots,t_{n})\in \mathcal{T}$ of sublinear and monotone
functions is called consistent if it satisfies, for each $\underline{t}%
=(t_{1},\cdots,t_{n})\in \mathcal{T}$ and $t_{n+1}\geq0$, \newline(i)
$G_{t_{1},\cdots,t_{n+1}}(\overline{A})=G_{\underline{t}}(A)$, for each
$A\in \mathbb{S}_{n\times d}$ where
\[
\overline{A}=\left[
\begin{array}
[c]{cc}%
A & 0\\
0 & 0
\end{array}
\right]  \in \mathbb{S}_{(n+1)\times d}%
\]
(ii) $G_{t_{\sigma(1)},\cdots,t_{\sigma(n)}}(A)=G_{\underline{t}}(\sigma(A))$,
where, for each $A\in \mathbb{S}_{n\times d}$ with the form $A=[A_{ij}%
]_{i,j=1}^{n}$, $A_{ij}\in \mathbb{R}^{d\times d}$, $\sigma(A)$ is defined by
$\sigma(A)=[A_{\sigma(i)\sigma(j)}]_{i,j=1}^{n}$.
\end{definition}

It is clear that the family $(G_{\underline{t}}^{X})_{\underline{t}%
\in \mathcal{T}}$ of 2nd moments of the process $X$ is consistent. Inversly, we have:

\begin{proposition}
Let a family of sublinear monotone functions $\{G_{\underline{t}%
}\}_{\underline{t}\in \mathcal{T}}$ be consistent. Then for each $\underline
{t}=(t_{1},\cdots,t_{n})\in \mathcal{T}$, there is a unique $n\times
d$-dimensional normal distribution $\mathbb{F}_{\underline{t}}$ defined on
$(\mathbb{R}^{n\times d},C_{l.Lip}(\mathbb{R}^{n\times d}))$. Moreover the
family of finite distributions $\{ \mathbb{F}_{\underline{t}}\}_{\underline
{t}\in \mathcal{T}}$ is consistant. Consequently there exists a $d$-dimensional
Gaussian process $(X_{t})_{t\geq0}$ in a sublinear expectation space
$(\Omega,\mathcal{H},\mathbb{E})$ such that
\[
(G_{\underline{t}}^{X})_{\underline{t}\in \mathcal{T}}=(G_{\underline{t}%
})_{\underline{t}\in \mathcal{T}},\ (\mathbb{F}_{\underline{t}}^{X}%
)_{\underline{t}\in \mathcal{T}}=(\mathbb{F}_{\underline{t}})_{\underline{t}%
\in \mathcal{T}}\
\]
for each $\underline{t}=(t_{1},\cdots,t_{n})\in \mathcal{T}$, the random vector
$X_{\underline{t}}=(X_{t_{1}},\cdots,X_{t_{n}})$ is $G$-normal distributed
with $G=G_{\underline{t}}$.
\end{proposition}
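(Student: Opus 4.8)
The plan is to build the normal distributions $\mathbb{F}_{\underline{t}}$ from the functions $G_{\underline{t}}$ using Proposition \ref{Prop-Gnorm copy(1)}, then verify consistency of the family $\{\mathbb{F}_{\underline{t}}\}$, and finally invoke Theorem \ref{Thm8.8} to produce the process. First I would fix $\underline{t}=(t_1,\dots,t_n)\in\mathcal{T}$. The hypothesis says $G_{\underline{t}}:\mathbb{S}_{n\times d}\to\mathbb{R}$ is sublinear and monotone; since it is also (being sublinear, i.e. subadditive and positively homogeneous, hence convex and Lipschitz in the appropriate sense) continuous, Proposition \ref{Prop-Gnorm copy(1)} applies with ``dimension'' $n\times d$ in place of $d$. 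This yields an $(n\times d)$-dimensional $G_{\underline{t}}$-normal random vector $Y_{\underline{t}}$ on some sublinear expectation space, and by the uniqueness clause of that proposition its distribution $\mathbb{F}_{\underline{t}}[\varphi]:=\mathbb{E}[\varphi(Y_{\underline{t}})]$, $\varphi\in C_{l.Lip}(\mathbb{R}^{n\times d})$, is uniquely determined by $G_{\underline{t}}$. This defines the candidate family.

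Next I would check that $\{\mathbb{F}_{\underline{t}}\}_{\underline{t}\in\mathcal{T}}$ is consistent in the sense of Definition \ref{Def3.7}. For condition (ii), let $\sigma$ be a permutation of $(1,\dots,n)$. I need $\mathbb{F}_{t_{\sigma(1)},\dots,t_{\sigma(n)}}[\varphi]=\mathbb{F}_{t_1,\dots,t_n}[\varphi_\sigma]$. By uniqueness it suffices to show the right-hand side, as a functional of $\varphi$, is the distribution of a $G_{t_{\sigma(1)},\dots,t_{\sigma(n)}}$-normal vector; but if $Y=(Y_{t_1},\dots,Y_{t_n})$ realizes $\mathbb{F}_{t_1,\dots,t_n}$ then the block-permuted vector $\sigma(Y):=(Y_{t_{\sigma(1)}},\dots,Y_{t_{\sigma(n)}})$ satisfies $a\,\sigma(Y)+b\,\overline{\sigma(Y)}\overset{d}{=}\sqrt{a^2+b^2}\,\sigma(Y)$ (permuting blocks commutes with the scaling relation and with taking independent copies), so $\sigma(Y)$ is $G'$-normal with $G'(A)=\tfrac12\mathbb{E}[\langle A\sigma(Y),\sigma(Y)\rangle]=\tfrac12\mathbb{E}[\langle \sigma^{-1}(A)Y,Y\rangle]=G_{\underline{t}}(\sigma^{-1}(A))$, which by consistency hypothesis (ii) equals $G_{t_{\sigma(1)},\dots,t_{\sigma(n)}}(A)$; uniqueness then forces $\mathbb{F}_{t_{\sigma(1)},\dots,t_{\sigma(n)}}=\mathbb{F}_{t_1,\dots,t_n}\circ(\varphi\mapsto\varphi_\sigma)$. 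For condition (i), given a realization $(Y_{t_1},\dots,Y_{t_{n+1}})$ of $\mathbb{F}_{t_1,\dots,t_{n+1}}$, the sub-vector $(Y_{t_1},\dots,Y_{t_n})$ is again normal with associated function $A\mapsto G_{t_1,\dots,t_{n+1}}(\overline{A})=G_{t_1,\dots,t_n}(A)$ by hypothesis (i), so by uniqueness its distribution is $\mathbb{F}_{t_1,\dots,t_n}$, which is precisely the marginalization identity. (Here I use Proposition \ref{GCD}: a linear image, in particular a coordinate projection, of a $G$-normal vector is $G$-normal.)

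Having established that $\{\mathbb{F}_{\underline{t}}\}$ is a consistent family of sublinear finite-dimensional distributions, I would apply Theorem \ref{Thm8.8} to obtain a $d$-dimensional process $(X_t)_{t\geq0}$ on a sublinear expectation space $(\Omega,\mathcal{H},\mathbb{E})$ whose finite-dimensional distributions are exactly $\mathbb{F}_{\underline{t}}$. Then by construction, for every $\underline{t}$, $X_{\underline{t}}=(X_{t_1},\dots,X_{t_n})$ has distribution $\mathbb{F}_{\underline{t}}$, hence is $(n\times d)$-dimensional normally distributed, so $(X_t)$ is a Gaussian process; and $G^X_{\underline{t}}(A)=\tfrac12\mathbb{E}[\langle AX_{\underline{t}},X_{\underline{t}}\rangle]=G_{\underline{t}}(A)$, giving $(G^X_{\underline{t}})=(G_{\underline{t}})$ and $(\mathbb{F}^X_{\underline{t}})=(\mathbb{F}_{\underline{t}})$. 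The main obstacle is the consistency verification in the previous paragraph — specifically making the permutation bookkeeping for block matrices $\sigma(A)=[A_{\sigma(i)\sigma(j)}]$ mesh cleanly with the identity $\langle A\sigma(Y),\sigma(Y)\rangle=\langle\sigma^{-1}(A)Y,Y\rangle$, and making sure each reduced/permuted vector is genuinely shown to satisfy the defining relation (\ref{e311}) so that the uniqueness part of Proposition \ref{Prop-Gnorm copy(1)} can be applied; everything else is a direct appeal to the already-established Theorem \ref{Thm8.8} and Proposition \ref{Prop-Gnorm copy(1)}.
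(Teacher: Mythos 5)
The paper states this proposition without any proof (an example follows it directly), so there is nothing to compare against step by step; your argument supplies the intended one and it is correct. You construct each $\mathbb{F}_{\underline{t}}$ from $G_{\underline{t}}$ via the existence--and--uniqueness result for $G$-normal distributions (Proposition \ref{Prop-Gnorm copy(1)}, correctly noting that a finite sublinear function on $\mathbb{S}_{n\times d}$ is automatically continuous), verify consistency of the resulting family by combining Proposition \ref{GCD} (linear images, hence coordinate projections and block permutations, of normal vectors are again normal) with the uniqueness clause, and then feed the consistent sublinear family into Theorem \ref{Thm8.8}; the identification of $G_{\underline{t}}^{X}$ with $G_{\underline{t}}$ then follows since $X_{\underline{t}}\overset{d}{=}Y_{\underline{t}}$. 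Two remarks on the bookkeeping you flag. First, your identity $\langle A\sigma(Y),\sigma(Y)\rangle=\langle\sigma^{-1}(A)Y,Y\rangle$ is the correct one under the paper's conventions $\sigma(A)=[A_{\sigma(i)\sigma(j)}]$ and $\varphi_{\sigma}(x_{1},\cdots,x_{n})=\varphi(x_{\sigma(1)},\cdots,x_{\sigma(n)})$; consequently the relation actually needed from the hypothesis is $G_{t_{\sigma(1)},\cdots,t_{\sigma(n)}}(A)=G_{\underline{t}}(\sigma^{-1}(A))$, whereas condition (ii) of the paper's definition literally reads $G_{\underline{t}}(\sigma(A))$. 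Running your same computation on the second-moment family $G_{\underline{t}}^{X}$ of an arbitrary process shows that it satisfies the $\sigma^{-1}$ version, not the $\sigma$ version, so the paper's condition (ii) carries a $\sigma$-versus-$\sigma^{-1}$ typo and your silent correction is the right reading of the hypothesis, not a gap in your proof. Second, condition (i) of Definition \ref{Def3.7} is garbled in the paper (as printed it is a tautology); your marginalization step, using the block matrix $\overline{A}$ and hypothesis (i) on the $G$'s, supplies exactly the intended content. Everything else is as it should be.
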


\begin{example}
Let $(B_{t})_{t\geq0}$ be a $d$-dimensional $G$-Brownian motion in a sublinear
expectation space $(\Omega,\mathcal{H},\mathbb{E})$ with
\[
G(A)=\frac{1}{2}\mathbb{E}[\left \langle AB_{1},B_{1}\right \rangle ]
\]
For each $\underline{t}=(t_{1},\cdots,t_{n})\in \mathcal{T}$, we set
$B_{\underline{t}}=(B_{t_{1}},\cdots,B_{t_{n}})$ and%
\[
G_{\underline{t}}^{B}(A):=\frac{1}{2}\mathbb{E[}\left \langle AB_{\underline
{t}},B_{\underline{t}}\right \rangle ]:A\in \mathbb{S}_{n\times d}%
\mapsto \mathbb{R}.
\]
$\{G_{\underline{t}}^{B}\}_{\underline{t}\in \mathcal{T}}$ is the family of 2nd
moments of $(B)_{t\geq0}$ and thus satisfying the above consistency. We then
can construct a $G$-Gaussian process $(X_{t})_{t\geq0}$ such that,
$G_{\underline{t}}^{B}=G_{\underline{t}}^{X}$ for each $\underline{t}%
\in \mathcal{T}$. But, in general, their family of finite dimensional
distributions $(\mathbb{F}_{\underline{t}}^{B})_{\underline{t}\in \mathcal{T}}$
and $(\mathbb{F}_{\underline{t}}^{X})_{\underline{t}\in \mathcal{T}}$ are not
the same.

Since we still have
\[
\mathbb{E}[|X_{t}-X_{s}|^{4}]=\mathbb{E}[|B_{t}-B_{s}|^{4}]\leq d|t-s|^{2},
\]
We then can apply the same arguments as in the case of $G$-Brownian motion to
prove that there exists a weakly compact family $\mathcal{P}$ of probability
measures on $(\bar{\Omega},\mathcal{B}(\bar{\Omega}))$, where $\bar{\Omega
}=C([0,\infty);\mathbb{R}^{d})$ equipped with the usual local uniform
convergence topology, such that, the canonical process $\bar{B}_{t}%
(\bar{\omega})=\bar{\omega}_{t}$, $t\geq0$ is a Gaussin process such that
\[
G_{\underline{t}}^{\bar{B}}=G_{\underline{t}}^{B},\  \  \underline{t}%
\in \mathcal{T}.
\]
Readers who are interested in the details can see our appendix.
\end{example}

\begin{definition}
Let $(X_{t})_{t\geq0}$ and $(Y_{t})_{t\geq0}$ be two stochastic processes in a
nonlinear expectation space $(\Omega,\mathcal{H},\mathbb{E})$. They are called
identically distributed if $\mathbb{F}_{\underline{t}}^{X}=\mathbb{F}%
_{\underline{t}}^{Y}$, for each $\underline{t}\in \mathcal{T}$. $(Y_{t}%
)_{t\geq0}$ is said to be distributionally independent of another process
$(Z_{t})_{t\geq0}$ if, for each $\underline{t}=(t_{1},\cdots,t_{n}%
)\in \mathcal{T}$ , $(Y_{t_{1}},\cdots,Y_{t_{n}})$ is independent of
$(Z_{t_{1}},\cdots,Z_{t_{n}})$.
\end{definition}

\begin{definition}
A sequence of $d$-dimensional stochastic processes $\{(X_{t}^{i})_{t\geq
0}\}_{i=1}^{\infty}$ in a nonlinear expectation space $(\Omega,\mathcal{H}%
,\mathbb{E})$ is said to be convergence in fintinte dimensional distributions
if for each $\underline{t}=(t_{1},\cdots,t_{n})\in \mathcal{T}$ and for each
$\varphi \in C_{b.Lip}(\mathbb{R}^{n\times d})$, the limit $\lim_{i\rightarrow
\infty}\{ \mathbb{E}[\varphi(X_{t_{1}}^{i},\cdots,X_{t_{n}}^{i})]$ exists.
\end{definition}

\begin{theorem}
Let $\{(X_{t}^{i})_{t\geq0}\}_{i=1}^{\infty}$ be a sequence of $d$-dimensional
stochastic processes in a sublinear expectation space $(\Omega,\mathcal{H}%
,\mathbb{E})$ such that, for each $i=1,2,\cdots,$\newline(i) $(X_{t}%
^{i})_{t\geq0}$ and $(X_{t}^{1})_{t\geq0}$ are identically
distributed;\newline(ii) $(X_{t}^{i+1})_{t\geq0}$ is distributonally
independent of $(X_{t}^{1},\cdots,X_{t}^{i+1})_{t\geq0}$;\newline(iii)
$\mathbb{E}[X_{t}^{i}]=\mathbb{E}[-X_{t}^{i}]\equiv0$, $t\geq0$. \newline Then
the sum%
\[
Z_{t}^{(N)}:=\frac{1}{\sqrt{N}}\sum_{i=1}^{Nn}X_{t}^{i},\  \ t\geq0,
\]
converges in fintinte dimensional distributions to a Gaussian process
$(Z_{t})_{t\geq0}$ under a sublinear expectation space $(\bar{\Omega
},\mathcal{\bar{H}},\mathbb{\bar{E}})$. The family of the 2nd moments of
$(X_{t}^{1})_{t\geq0}$ and that of $(Z_{t})_{t\geq0}$ are the same, namely
$\mathbb{E}[Z_{t}]=\mathbb{E}[-Z_{t}]\equiv0$ and $G_{\underline{t}}^{X^{1}%
}=G_{\underline{t}}^{Z}$, for each $\underline{t}\in \mathcal{T}$.
\end{theorem}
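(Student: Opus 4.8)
The plan is to reduce the statement to the finite-dimensional central limit theorem in a sublinear expectation space, namely Theorem \ref{tCLT}, applied coordinate-by-coordinate over the time instants in a fixed $\underline{t}$. First I would fix an arbitrary $\underline{t}=(t_{1},\cdots,t_{n})\in \mathcal{T}$ and a test function $\varphi \in C_{b.Lip}(\mathbb{R}^{n\times d})$, and consider the $\mathbb{R}^{n\times d}$-valued random vectors $\xi_{i}:=X^{i}_{\underline{t}}=(X^{i}_{t_{1}},\cdots,X^{i}_{t_{n}})$ for $i=1,2,\cdots$. Hypothesis (i) gives $\mathbb{F}^{X^{i}}_{\underline{t}}=\mathbb{F}^{X^{1}}_{\underline{t}}$, so the $\xi_{i}$ are identically distributed; hypothesis (ii) gives that $\xi_{i+1}$ is independent of $(\xi_{1},\cdots,\xi_{i+1})$, hence of $(\xi_{1},\cdots,\xi_{i})$; hypothesis (iii) gives $\mathbb{E}[\xi_{i}]=\mathbb{E}[-\xi_{i}]=0$ (componentwise, using $\mathbb{E}[X^{i}_{t}]=\mathbb{E}[-X^{i}_{t}]=0$ and, for the mixed linear combinations appearing in testing against linear functionals, Proposition \ref{Prop-X+Y}). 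Thus $\{\xi_{i}\}_{i=1}^{\infty}$ is an i.i.d. sequence in the sense of Theorem \ref{tCLT} with zero mean-uncertainty.

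Applying Theorem \ref{tCLT} to this sequence, the normalized sums $\bar{S}_{N}=(\xi_{1}+\cdots+\xi_{N})/\sqrt{N}$ converge in law to an $n\times d$-dimensional $G$-normal random vector $\zeta_{\underline{t}}$ with $G_{\underline{t}}(A)=\tfrac12\mathbb{E}[\langle A\xi_{1},\xi_{1}\rangle]=G^{X^{1}}_{\underline{t}}(A)$ for $A\in\mathbb{S}_{n\times d}$. Since $\bar{S}_{N}=Z^{(N)}_{\underline{t}}$ (reading $Z^{(N)}_{t}=\frac{1}{\sqrt{N}}\sum_{i=1}^{N}X^{i}_{t}$; note the "$Nn$" in the statement should be $N$, the $n$ being a typo), this shows that for every $\underline{t}$ the finite-dimensional distributions $\mathbb{F}^{Z^{(N)}}_{\underline{t}}$ converge, as $N\to\infty$, to a well-defined $G$-normal distribution $\mathbb{F}_{\underline{t}}$ on $(\mathbb{R}^{n\times d},C_{b.Lip}(\mathbb{R}^{n\times d}))$ with associated function $G_{\underline{t}}=G^{X^{1}}_{\underline{t}}$. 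Next I would check that the family $(\mathbb{F}_{\underline{t}})_{\underline{t}\in\mathcal{T}}$ is consistent in the sense of Definition \ref{Def3.7}: both consistency conditions for the limiting family follow by passing to the limit in the corresponding (trivially satisfied) consistency relations for the prelimit family $(\mathbb{F}^{Z^{(N)}}_{\underline{t}})_{\underline{t}}$, which holds because $(Z^{(N)}_{t})_{t\ge0}$ is a genuine stochastic process. Equivalently, one invokes the consistency of $(G^{X^{1}}_{\underline{t}})_{\underline{t}}$ together with Proposition stating that a consistent family of sublinear monotone functions yields a consistent family of $G$-normal distributions.

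Having a consistent family $(\mathbb{F}_{\underline{t}})_{\underline{t}\in\mathcal{T}}$ of sublinear distributions, I would apply Theorem \ref{Thm8.8} to obtain a $d$-dimensional stochastic process $(Z_{t})_{t\ge0}$ on a sublinear expectation space $(\bar{\Omega},\bar{\mathcal{H}},\bar{\mathbb{E}})$ whose finite-dimensional distributions are exactly $(\mathbb{F}_{\underline{t}})_{\underline{t}\in\mathcal{T}}$. By construction $Z_{\underline{t}}$ is $G$-normal with $G=G_{\underline{t}}=G^{X^{1}}_{\underline{t}}$, so $(Z_{t})_{t\ge0}$ is a Gaussian process, $G^{Z}_{\underline{t}}=G^{X^{1}}_{\underline{t}}$ for all $\underline{t}$, and $\mathbb{E}[Z_{t}]=\mathbb{E}[-Z_{t}]=0$ (the latter from the Remark after the definition of $G$-normal distribution, or directly from $G^{X^{1}}_{\underline{t}}$ being the second-moment function of a zero-mean vector). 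Finally, $Z^{(N)}$ converges to $Z$ in finite-dimensional distributions since for each $\underline{t}$ and each $\varphi\in C_{b.Lip}$ we have $\lim_{N}\bar{\mathbb{E}}[\varphi(Z^{(N)}_{\underline{t}})]=\mathbb{F}_{\underline{t}}[\varphi]=\bar{\mathbb{E}}[\varphi(Z_{\underline{t}})]$.

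The main obstacle is the very first step: verifying that the hypotheses (i)--(iii) on the \emph{processes} $(X^{i}_{t})_{t\ge0}$ translate precisely into the i.i.d.\ hypotheses of Theorem \ref{tCLT} for the \emph{vectors} $\xi_{i}=X^{i}_{\underline{t}}$. The independence condition (ii) is stated as "$(X^{i+1}_{t})_{t\ge0}$ is distributionally independent of $(X^{1}_{t},\cdots,X^{i+1}_{t})_{t\ge0}$", which by the definition of distributional independence means $(X^{i+1}_{t_{1}},\cdots,X^{i+1}_{t_{n}})$ is independent of $(X^{1}_{t_{1}},\cdots,X^{i+1}_{t_{n}})$ for every $\underline{t}$ — one must extract from this the independence of $\xi_{i+1}$ from $(\xi_{1},\cdots,\xi_{i})$, noting carefully the asymmetry of independence under sublinear expectations (Remark after Definition \ref{d2}); the indices in (ii) should read $(X^{1},\cdots,X^{i})$, and this is the only delicate bookkeeping point. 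The rest is a routine application of the already-established central limit theorem and Kolmogorov-type construction.
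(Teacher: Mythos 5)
Your proposal is correct and follows essentially the same route as the paper's own proof: fix $\underline{t}\in\mathcal{T}$, apply Theorem \ref{tCLT} to the i.i.d.\ vectors $X_{\underline{t}}^{i}$ to obtain a consistent family of $G$-normal limit distributions $(\mathbb{F}_{\underline{t}}^{\ast})_{\underline{t}\in\mathcal{T}}$, and then invoke Theorem \ref{Thm8.8} to realize them as a Gaussian process. Your added care in translating hypotheses (i)--(iii) into the CLT hypotheses and your flagging of the typos ($Nn$ versus $N$, and the index range in (ii)) are sensible refinements of the same argument.
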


\begin{remark}
It is worth to stress here that, even in the case where the above
$\{(X_{t}^{i})_{t\geq0}\}_{i=1}^{\infty}$ is a sequence of $G$-Brownian
motions, the limit $(Z_{t})_{t\geq0}$ is a Gaussian process but it may not be
a $G$-Brownian motion.
\end{remark}

\begin{proof}
The proof is simply from the central limit theorem, i.e., Theorem \ref{tCLT}.
Indeed, for each fixed $\underline{t}=(t_{1},\cdots,t_{n})\in \mathcal{T}$,
$(X_{\underline{t}}^{i})_{i=1}^{\infty}:=(X_{t_{1}}^{i},\cdots,X_{t_{n}}%
^{i})_{i=1}^{\infty}$ is a sequence of $\mathbb{R}^{n\times d}$-valued random
vectors which is i.i.d. in the snese that $X_{\underline{t}}^{i}\overset{d}%
{=}X_{\underline{t}}^{1}$ and $X_{\underline{t}}^{i+1}$ is independent of
$X_{\underline{t}}^{1},\cdots,X_{\underline{t}}^{i}$, for $i=1,2,\cdots$. We
then can apply the central limit theorem under the sublinear expectation
$\mathbb{E}$ to prove that $Z_{\underline{t}}^{(N)}:=\frac{1}{\sqrt{N}}%
\sum_{i=1}^{N}X_{\underline{t}}^{i}$ converges in law to an $\mathbb{R}%
^{n\times d}$-valued random vector $Z_{\underline{t}}^{\ast}$, of which the
distribution denoted by $\mathbb{F}_{\underline{t}}^{\ast}$ is $G$-normal.
Moreover the family $(\mathbb{F}_{\underline{t}}^{\ast})_{\underline{t}%
\in \mathcal{T}}$ is consistent. It follows from Theorem \ref{Thm8.8} that
there exists a $d$-dimensional stochastic process $(Z_{t})_{t\geq0}$ in some
sublinear expectation space $(\bar{\Omega},\mathcal{\bar{H}},\mathbb{\bar{E}%
})$ such that $(\mathbb{F}_{\underline{t}}^{Z})_{\underline{t}\in \mathcal{T}%
}=(\mathbb{F}_{\underline{t}}^{\ast})_{\underline{t}\in \mathcal{T}}$. Thus
$(Z_{t})_{t\geq0}$ is a Gaussian process.
\end{proof}

\section{$q$-normal distribution and $q$-Brownain motion in quantum mechanics}

The approach to construct stochastic processes such as $G$-Brownian motion,
$G$-Gaussian processes as well as some other typical stochastic processes in a
nonlinear expectation, e.g. L\'{e}vy processes and Markovian processes, can be
also applied to construct some new stochastic processes in an $\mathbb{R}^{n}%
$-valued expectation space $(\Omega,\mathcal{H},\mathbb{E})$. As a very
typical example we explain how to construct a $q$-Brownain motion under a
$\mathbb{C}$-valued linear expectation space.

Let $\Omega$ be a given set and let $\mathcal{H}$ be a linear space of complex
valued functions defined on $\Omega$ such that $c\in \mathcal{H}$ for each
complex constant $c$. The space $\mathcal{H}$ is the random space in our consideration.

\begin{definition}
{{A $\mathbb{C}$-valued \textbf{linear expectation }$\mathbb{E}$ is a
functional $\mathbb{{E}}:\mathcal{H}\rightarrow \mathbb{C}$ satisfying }}

\noindent \textbf{\textup{(i)} Constant preserving:}
\[
\mathbb{E}[c]=c\  \  \  \text{for}\ c\in \mathbb{C}.
\]

\noindent \textbf{\textup{(ii)} Linearity: } For each $X,Y\in{\mathcal{H}}$,
\[
\mathbb{E}[\alpha X+\beta Y]=\alpha \mathbb{E}[X]+\beta \mathbb{E}%
[Y],\  \  \alpha,\beta \in \mathbb{C}%
\]

The triple $(\Omega,\mathcal{H},\mathbb{E}\mathbb{)}$ is called a $\mathbb{C}%
$-valued linear expectation space.
\end{definition}

Let $X_{1}$ and $X_{2}$ be two $\mathbb{C}^{m}$-valued random vectors defined
on a $\mathbb{C}$-valued linear{ expectation space }$(\Omega,\mathcal{H}%
,\mathbb{{E}})$. They are called identically distributed, denoted by
$X_{1}\overset{d}{=}X_{2}$, if, for each function $\varphi$ defined on
$\mathbb{C}^{m}$ such that $\varphi(X_{1})\in \mathcal{H}$ (resp.
$\varphi(X_{2})\in \mathcal{H}$) implies $\varphi(X_{2})\in \mathcal{H}$ (resp.
$\varphi(X_{1})\in \mathcal{H}$) and
\[
\mathbb{E}[\varphi(X_{1})]=\mathbb{E}[\varphi(X_{2})].
\]

\begin{definition}
In a $\mathbb{C}$-valued linear expectation space $(\Omega,\mathcal{H}%
,\mathbb{E})$, for two random vectors $X\in \mathcal{H}^{m}$ and $Y\in
\mathcal{H}^{n}$, $Y$ is said to be independent of $X$ under $\mathbb{E}$ if
we have
\[
\mathbb{E}[\varphi(X,Y)]=\mathbb{E}[\mathbb{E}[\varphi(x,Y)]_{x=X}],
\]
for each function $\varphi$ on $\mathbb{C}^{m+n}$ such that the above
operations of expectations are meaningful. $Y$ is said to be an independent
copy of $X$ if moreover $Y\overset{d}{=}X$.
\end{definition}

We refer to [Peng2010-chI] for the product space method to construct
independent random variables with specific distributions.

\begin{definition}
An $\mathbb{R}^{d}$-valued valued random vector $X=(X_{1},\cdots,X_{d})$ on a
linear valuation space $(\Omega,\mathcal{H},\mathbb{E})$ is called a standard
$q$\textbf{-}normal distributed if its components are independent from each
others with $\varphi(X)\in \mathcal{H}$ and $\mathbb{E}[X_{k}^{2}]=-i$ ($i$
stands for the imaginary number) and
\begin{equation}
aX+b\bar{X}\overset{d}{=}\sqrt{a^{2}+b^{2}}X\  \  \  \text{for }a,b\in
\mathbb{R},\  \label{q-normal}%
\end{equation}
where $\bar{X}$ is an independent copy of $X$. Here $h(\mathbb{R}^{d})$ is the
space of complex valued functions on $\mathbb{R}^{d}$ spanned by polynomials
of $(x_{1},\cdots,x_{d})$ and all $\varphi \in C^{\infty}(\mathbb{R}^{d})$ such
that $\partial_{x}^{(n)}\varphi \in L^{2}(\mathbb{R}^{d})$, $n=0,1,2,\cdots.$
\end{definition}

\begin{remark}
Noting that $\mathbb{E}[X_{k}+\bar{X}_{k}]=2\mathbb{E}[X_{k}]$ and
$\mathbb{E}[X_{k}+\bar{X}_{k}]=\mathbb{E}[\sqrt{2}X_{k}]=\sqrt{2}%
\mathbb{E}[X_{k}]$, we then have $\mathbb{E}[X_{k}]=0$, $k=1,\cdots,d.$
\end{remark}

Just like in the case of normal distribution we can also define, for a fixed
complex valued function $\varphi \in h(\mathbb{C}^{d})$,
\[
w(t,x):=\mathbb{E}[\varphi(x+\sqrt{t}X)].
\]
From the definition (\ref{q-normal}) we have
\begin{align*}
w(t,x)  &  =\mathbb{E}[\varphi(x+\sqrt{\delta}X+\sqrt{t}\bar{X})]\\
&  =\mathbb{E}[w(t,x+\sqrt{\delta}X)].
\end{align*}
Similar to the situation of $G$-normal distributions, the function $w$ solves
the following free Schr\"{o}dinger equation (for $d=1$):
\[
\partial_{t}w(t,x)=\frac{i}{2}\Delta w(t,x).
\]
We can use the same method as in [Peng2010] to prove the existance of such
type of $q$-normal distributed random variable $X$, using classical results of PDE.

\begin{definition}
We call $(X_{t}(\omega))_{t\geq0}$ a complex-valued stochastic process define
in a linear expectation space $(\Omega,\mathcal{H},\mathbb{E})$ if $X_{t}%
\in \mathcal{H}$ for each $t\geq0$.
\end{definition}

\begin{definition}
A stochastic process $(B_{t})_{t\geq0}$ define in a $\mathbb{C}$-valued linear
valuation space $(\Omega,\mathcal{H},\mathbb{E})$ is call a $q$-Brownian
motion if it satisfies: for each $t,s\geq0$, \newline(i) $B_{t+s}%
-B_{s}\overset{d}{=}B_{t}$ and $B_{t+s}-B_{s}$ is independent of $B_{t_{1}%
},\cdots,B_{t_{n}}$, $t_{i}\leq s$, $i=1,2,\cdots$;\newline(ii) $\mathbb{E}%
[B_{t}]\equiv0$ and $\mathbb{E}[B_{t}^{2}]=-it$.
\end{definition}

In analogous to $G$-Brownian motions we can the construction a $q$-Brownian
motion as follows. Let $\Omega=(\mathbb{R}^{d})^{[0,\infty)}$ be the space of
all $d$-dimensional complexed valued process and let $B_{t}(\omega)=\omega
_{t}$, $t\geq0$, be the canonical process. We define
\[
\mathcal{H}=\{ \varphi(B_{t_{1}},\cdots,B_{t_{n}}):\underline{t}=(t_{1}%
,\cdots,t_{n})\in \mathcal{T},\  \varphi \in h(\mathbb{R}^{n\times d})\}.
\]
It then remains to construct consistently a $\mathbb{C}$-valued linear
expectation of $\mathbb{E}$ on $(\Omega,\mathcal{H})$ under which the
canonical process $(B_{t})_{t\geq0}$ is a $q$-Brownian motion. To this end we
are given a sequence of standard $q$-normally distributed random variables
$\{X_{i}\}_{i=1}^{\infty}$ of a $\mathbb{C}$-valued expectation space
$(\bar{\Omega},\mathcal{\bar{H}},\mathbb{\bar{E}})$ such that $X_{i+1}$ is
independent of $(X_{1},\cdots,X_{i})$ for $i=1,2,\cdots$. For each
$\underline{t}=(t_{1},\cdots,t_{n})\in \mathcal{T}$ with $t_{1}\leq t_{2}%
\leq \cdots \leq t_{n}$, we set, for each $\xi \in \mathcal{H}$ of the form
$\xi(\omega)=\varphi(B_{t_{1}},B_{t_{2}}-B_{t_{1}},\cdots,B_{t_{n}}-B_{t_{n}%
})$, we set
\begin{align*}
\mathbb{E}[\xi]  &  =\mathbb{E}[\varphi(B_{t_{1}},B_{t_{2}}-B_{t_{1}}%
,\cdots,B_{t_{n}}-B_{t_{n}})]\\
&  :=\mathbb{\bar{E}}[\varphi(\sqrt{t_{1}}X_{1},\sqrt{t_{2}-t_{1}}X_{2}%
,\cdots,\sqrt{t_{n}-t_{n-1}}X_{n})].
\end{align*}
We see that $\mathbb{E}:\mathcal{H}\mapsto \mathbb{C}$ consistently defines a
$\mathbb{C}$-valued linear expectation under which $(B_{t})_{t\geq0}$ becomes
a $q$-Brownian motion.

We can also check that $w(t,x):=\mathbb{E}[\varphi(x+B_{t})]$, $t\geq0$,
$x\in \mathbb{R}^{d}$ solves the following free Schr\"{o}dinger equation%
\[
\partial_{t}w(t,x)=\frac{i}{2}\Delta w(t,x),\  \ w|_{t=0}=\varphi.
\]
The situation with potential $V(x)$ corresponds to:
\[
w(t,x)=\mathbb{E}[\varphi(x+B_{t})\exp \{ \int_{0}^{t}V(x+B_{s}%
)ds\}]\},\  \ t\geq0,x\in \mathbb{R}^{d}.
\]
This forms a new type of Feynman-Kac formula to give a path-representation of
the solution of a Schr\"{o}dinger equation.

\section{Appedix}

\subsection{Appendix A: Parabolic PDE associated with $G$-normal
distributions}

The distribution of a $G$-normally distributed random vector $X$ is
characterized by the following parabolic partial differential equation (PDE
for short) defined on $[0,\infty)\times \mathbb{R}^{d}:$%
\begin{equation}
\partial_{t}u-G(D_{x}^{2}u)=0, \label{ee03}%
\end{equation}
with Cauchy condition$\  \ u|_{t=0}=\varphi$, where $G:\mathbb{S}%
(d)\rightarrow \mathbb{R}$ is defined by (\ref{e313}) and $D^{2}u=(\partial
_{x_{i}x_{j}}^{2}u)_{i,j=1}^{d}$, $Du=(\partial_{x_{i}}u)_{i=1}^{d}$. The PDE
(\ref{ee03}) is called a \textbf{$G$-heat equation}%
\index{$G$-equation}%
.

\begin{proposition}
\label{gG-P1 copy(1)}Let $X\in \mathcal{H}^{d}$ be normally distributed, i.e.,
\textup{(\ref{e311})} holds. Given a function $\varphi \in C_{l.Lip}%
(\mathbb{R}^{d})$, we define
\[
u(t,x):=\mathbb{E}[\varphi(x+\sqrt{t}X)],\ (t,x)\in \lbrack0,\infty
)\times \mathbb{R}^{d}.
\]
Then we have%
\begin{equation}
u(t+s,x)=\mathbb{E}[u(t,x+\sqrt{s}X)],\  \ s\geq0. \label{e315}%
\end{equation}
We also have the estimates: for each $T>0,$ there exist constants $C,k>0$ such
that, for all $t,s\in \lbrack0,T]$ and $x,\bar{x},y,\bar{y}\in \mathbb{R}^{d}$,
\begin{equation}
|u(t,x)-u(t,\bar{x})|\leq C(1+|x|^{k}+|\bar{x}|^{k})(|x-\bar{x}|) \label{e04}%
\end{equation}
and%
\begin{equation}
|u(t,x)-u(t+s,x)|\leq C(1+|x|^{k})(s+|s|^{1/2}). \label{e05}%
\end{equation}
Moreover, $u$ is the unique viscosity solution, continuous in the sense of
\textup{(\ref{e04})} and \textup{(\ref{e05})}, of the PDE \textup{(\ref{ee03}%
)}.
\end{proposition}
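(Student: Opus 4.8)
The plan is to derive every assertion from the self‑consistency relation (\ref{e311}) together with the defining identity (\ref{e313}) for $G$. First I would prove the convolution identity (\ref{e315}). Putting $a=\sqrt{t}$, $b=\sqrt{s}$ in (\ref{e311}) gives $\sqrt{t}\,X+\sqrt{s}\,\bar{X}\overset{d}{=}\sqrt{t+s}\,X$ with $\bar{X}$ an independent copy of $X$, so $u(t+s,x)=\mathbb{E}[\varphi(x+\sqrt{t+s}\,X)]=\mathbb{E}[\varphi(x+\sqrt{t}\,X+\sqrt{s}\,\bar{X})]$. Since $\bar{X}\overset{d}{=}X$ (Definition \ref{d1}) we have $\mathbb{E}[\varphi(y+\sqrt{s}\,\bar{X})]=u(s,y)$ for every $y$, and since $\bar{X}$ is independent of $X$, Definition \ref{d2} gives $\mathbb{E}[\varphi(x+\sqrt{t}\,X+\sqrt{s}\,\bar{X})]=\mathbb{E}[u(s,x+\sqrt{t}\,X)]$; as $t+s$ is symmetric this is exactly (\ref{e315}) after exchanging $t$ and $s$.

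Next I would establish the regularity estimates. For (\ref{e04}), sub‑additivity gives $|u(t,x)-u(t,\bar{x})|\le\mathbb{E}[\,|\varphi(x+\sqrt{t}\,X)-\varphi(\bar{x}+\sqrt{t}\,X)|\,]$; inserting the local‑Lipschitz bound on $\varphi$, using $\sqrt{t}\le\sqrt{T}$ and the finite moments $\mathbb{E}[|X|^{n}]<\infty$, and bounding $\mathbb{E}[|x+\sqrt{t}\,X|^{m}]$ by a polynomial in $|x|$ via (\ref{ee04.5}) yields (\ref{e04}) with $k$ fixed by the growth exponent of $\varphi$. For (\ref{e05}) I would combine (\ref{e315}) with sub‑additivity to get $|u(t+s,x)-u(t,x)|\le\mathbb{E}[\,|u(t,x+\sqrt{s}\,X)-u(t,x)|\,]$, bound the integrand by (\ref{e04}) as $C(1+|x+\sqrt{s}\,X|^{k}+|x|^{k})\sqrt{s}\,|X|$, and estimate the resulting expectation by a polynomial in $|x|$ uniform for $s\in[0,T]$ using Hölder's inequality (\ref{ee04.6}); this produces the $|s|^{1/2}$ term, which already dominates $s+|s|^{1/2}$ for small $s$. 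Combining (\ref{e04}) and (\ref{e05}) gives the asserted joint continuity of $u$.

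For the PDE, first note $u(0,x)=\mathbb{E}[\varphi(x)]=\varphi(x)$. Fix $(t,x)$ with $t>0$ and a smooth test function $\psi$ with $\psi\ge u$ and $\psi(t,x)=u(t,x)$. From (\ref{e315}), $u(t,x)=\mathbb{E}[u(t-\delta,x+\sqrt{\delta}\,X)]\le\mathbb{E}[\psi(t-\delta,x+\sqrt{\delta}\,X)]$ for $0<\delta<t$. Taylor expanding $\psi$ around $(t,x)$,
\[
\psi(t-\delta,x+\sqrt{\delta}\,X)=\psi(t,x)-\delta\,\partial_{t}\psi(t,x)+\sqrt{\delta}\,\langle D_{x}\psi(t,x),X\rangle+\tfrac{\delta}{2}\langle D_{x}^{2}\psi(t,x)X,X\rangle+R_{\delta},
\]
with $\mathbb{E}[|R_{\delta}|]=o(\delta)$ after localizing $\psi$ to have bounded derivatives, using $\mathbb{E}[|X|^{3}]<\infty$. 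Since $\langle D_{x}\psi(t,x),X\rangle$ is one‑dimensional $G$‑normal (Proposition \ref{GCD}) it has no mean‑uncertainty, so by Proposition \ref{Prop-X+Y} it drops out of the expectation; using constant translatability and the definition (\ref{e313}) of $G$,
\[
\mathbb{E}[\psi(t-\delta,x+\sqrt{\delta}\,X)]=\psi(t,x)-\delta\,\partial_{t}\psi(t,x)+\delta\,G(D_{x}^{2}\psi(t,x))+o(\delta).
\]
Combining with $\psi(t,x)\le\mathbb{E}[\psi(t-\delta,x+\sqrt{\delta}\,X)]$, dividing by $\delta$ and letting $\delta\downarrow0$ gives $\partial_{t}\psi(t,x)-G(D_{x}^{2}\psi(t,x))\le0$, so $u$ is a viscosity subsolution; the supersolution inequality follows symmetrically, using monotonicity of $\mathbb{E}$ with a test function touching $u$ from below.

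Finally, uniqueness within the class of functions continuous in the sense of (\ref{e04})--(\ref{e05}) follows from the comparison principle for the fully nonlinear degenerate parabolic equation $\partial_{t}u-G(D^{2}u)=0$: by (\ref{e314}) and (\ref{ch2e2}) the operator $G$ is convex, positively homogeneous and degenerate elliptic, and the comparison principle on $[0,T]\times\mathbb{R}^{d}$ for solutions of polynomial growth is classical; I would invoke the viscosity‑solution results used in [Peng2010]. I expect this uniqueness step to be the main obstacle: the comparison argument on an unbounded domain with polynomially growing solutions needs a doubling‑of‑variables argument with a growth‑controlling penalization, which is standard but technically delicate. The existence half, by contrast, is a direct consequence of (\ref{e311}), Proposition \ref{Prop-X+Y} and (\ref{e313}); the only genuine technical point there is the uniform control of $R_{\delta}$, handled by a routine localization of the test function.
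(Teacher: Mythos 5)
The paper states this proposition in its appendix without any proof (it is quoted from [Peng2010]), and your argument reproduces the standard proof from that source: the characterization (\ref{e311}) plus the definition of independence for the dynamic-programming identity (\ref{e315}), the local-Lipschitz growth bound and finite moments of $X$ for the estimates (\ref{e04})--(\ref{e05}), a Taylor expansion in which Proposition \ref{Prop-X+Y} removes the first-order term and (\ref{e313}) produces $G(D_x^2\psi)$ for the viscosity sub/supersolution property, and the comparison principle for uniqueness. The argument is correct, and the only two points requiring care --- the localization of the test function to control $\mathbb{E}[|R_\delta|]=o(\delta)$, and the comparison principle on an unbounded domain for solutions of polynomial growth --- are exactly the ones you flag.
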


\begin{corollary}
\label{gG-P1coro copy(1)} If both $X$ and $\bar{X}$ satisfy
\textup{(\ref{e311})} {with the same }$G${, i.e.,}%
\[
G(A):=\mathbb{E}[\frac{1}{2}\left \langle AX,X\right \rangle ]=\mathbb{E}%
[\frac{1}{2}\left \langle A\bar{X},\bar{X}\right \rangle ]\  \  \  \text{for}%
\ A\in \mathbb{S}(d),
\]
{ then }$X\overset{d}{=}\bar{X}$. In particular, $X\overset{d}{=}-X$.
\end{corollary}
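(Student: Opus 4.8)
The plan is to reduce the claim to the uniqueness part of Proposition \ref{gG-P1 copy(1)}. Fix an arbitrary test function $\varphi\in C_{l.Lip}(\mathbb{R}^{d})$ and set
\[
u(t,x):=\mathbb{E}[\varphi(x+\sqrt{t}X)],\qquad \bar{u}(t,x):=\mathbb{E}[\varphi(x+\sqrt{t}\bar{X})],\quad (t,x)\in[0,\infty)\times\mathbb{R}^{d}.
\]
By Proposition \ref{gG-P1 copy(1)} applied to $X$, the function $u$ is a viscosity solution of the $G$-heat equation $\partial_{t}u-G(D_{x}^{2}u)=0$ with $u|_{t=0}=\varphi$, and it satisfies the regularity estimates \textup{(\ref{e04})} and \textup{(\ref{e05})}. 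Applying the same proposition to $\bar{X}$, the function $\bar{u}$ is a viscosity solution of $\partial_{t}\bar{u}-\bar{G}(D_{x}^{2}\bar{u})=0$ with the same Cauchy data, where $\bar{G}(A):=\mathbb{E}[\tfrac{1}{2}\langle A\bar{X},\bar{X}\rangle]$; but the hypothesis of the corollary is precisely $\bar{G}=G$, so $u$ and $\bar{u}$ solve one and the same PDE with one and the same initial condition and both lie in the uniqueness class described by \textup{(\ref{e04})}--\textup{(\ref{e05})}.

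First I would invoke the uniqueness statement of Proposition \ref{gG-P1 copy(1)} to conclude $u\equiv\bar{u}$ on $[0,\infty)\times\mathbb{R}^{d}$. Taking $t=1$ and $x=0$ gives $\mathbb{E}[\varphi(X)]=\mathbb{E}[\varphi(\bar{X})]$. Since $\varphi\in C_{l.Lip}(\mathbb{R}^{d})$ was arbitrary, this is exactly $X\overset{d}{=}\bar{X}$ by Definition \ref{d1}.

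For the final assertion, I would first check that $-X$ is itself $G$-normal with the same $G$: by Proposition \ref{GCD} with $A=-I_{d}\in\mathbb{R}^{d\times d}$ the vector $-X=(-I_{d})X$ satisfies \textup{(\ref{e311})}, and since $\langle A(-X),(-X)\rangle=\langle AX,X\rangle$ its associated function is $\mathbb{E}[\tfrac{1}{2}\langle A(-X),(-X)\rangle]=\mathbb{E}[\tfrac{1}{2}\langle AX,X\rangle]=G(A)$. Applying the first part with $\bar{X}$ replaced by $-X$ then yields $X\overset{d}{=}-X$.

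I expect no serious obstacle here: all the analytic content — existence of $u$, the growth and continuity estimates, and above all the comparison principle that gives uniqueness of viscosity solutions — is packaged inside Proposition \ref{gG-P1 copy(1)}. The only point needing a moment's care is the observation that the PDE together with its Cauchy data depends on the distribution of $X$ only through $G$ and on the test function $\varphi$, so that equality of the two $G$-functions forces the representation functions $u$ and $\bar{u}$ to coincide; the passage from $u\equiv\bar{u}$ back to equality of distributions is then immediate from the definition of identical distribution on $C_{l.Lip}$.
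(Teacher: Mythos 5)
Your argument is correct and is exactly the derivation the paper intends: the corollary is stated as an immediate consequence of the uniqueness part of Proposition \ref{gG-P1 copy(1)}, since the $G$-heat equation and its Cauchy data depend on the law of $X$ only through $G$ and $\varphi$, so $u\equiv\bar{u}$ and evaluation at $(1,0)$ gives $\mathbb{E}[\varphi(X)]=\mathbb{E}[\varphi(\bar{X})]$ for all test functions. Your handling of the special case $X\overset{d}{=}-X$ via Proposition \ref{GCD} and the invariance $\langle A(-X),(-X)\rangle=\langle AX,X\rangle$ is also the standard route.
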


\begin{example}
Let $X$ be $G$-normal distributed. The distribution of $X$ is characterized
by
\[
u(t,x)=\mathbb{E}[\varphi(x+\sqrt{t}X)],\  \  \varphi \in C_{l.Lip}%
(\mathbb{R}^{d}).
\]
In particular, $\mathbb{E}[\varphi(X)]=u(1,0)$, where $u$ is the solution of
the following parabolic PDE defined on $[0,\infty)\times \mathbb{R}^{d}:$%
\begin{equation}
\partial_{t}u-G(D^{2}u)=0,\  \ u|_{t=0}=\varphi, \label{e03}%
\end{equation}
where $G=G_{X}(A):\mathbb{S}(d)\rightarrow \mathbb{R}$ is defined by
\[
G(A):=\frac{1}{2}\mathbb{E}[\left \langle AX,X\right \rangle ],\  \  \ A\in
\mathbb{S}(d).
\]

The parabolic PDE \textup{(\ref{e03})} is called a \textbf{$G$-heat equation}.%
\index{$G$-heat equation}%

It is easy to check that $G$ is a sublinear function defined on $\mathbb{S}%
(d)$. By Theorem \ref{t1} there exists a bounded, convex and closed subset
$\Theta \subset \mathbb{S}(d)$ such that
\[
\frac{1}{2}\mathbb{E}[\left \langle AX,X\right \rangle ]=G(A)=\frac{1}{2}%
\sup_{Q\in \Theta}tr[AQ],\  \ A\in \mathbb{S}(d).
\]
Since $G(A)$ is monotonic: $G(A_{1})\geq G(A_{2})$, for $A_{1}\geq A_{2}$, it
follows that%
\[
\Theta \subset \mathbb{S}_{+}(d)=\{ \theta \in \mathbb{S}(d):\theta \geq
0\}=\{BB^{T}:B\in \mathbb{R}^{d\times d}\},
\]
where $\mathbb{R}^{d\times d}$\label{splusd} is the set of all $d\times d$
matrices. If $\Theta$ is a singleton: $\Theta=\{Q\}$, then $X$ is classical
zero-mean normal distributed with covariance $Q$. In general, $\Theta$
characterizes the covariance uncertainty of $X$.

When $d=1$, we have $X\overset{d}{=}N(\{0\} \times \lbrack \underline{\sigma
}^{2},\bar{\sigma}^{2}])$ (We also denoted by $X\overset{d}{=}N(0,[\underline
{\sigma}^{2},\bar{\sigma}^{2}])$), where $\bar{\sigma}^{2}=\mathbb{E}[X^{2}]$
and $\underline{\sigma}^{2}=-\mathbb{E}[-X^{2}]$. The corresponding $G$-heat
equation is
\[
\partial_{t}u-\frac{1}{2}(\bar{\sigma}^{2}(\partial_{xx}^{2}u)^{+}%
-\underline{\sigma}^{2}(\partial_{xx}^{2}u)^{-})=0,~u|_{t=0}=\varphi.
\]
For the case $\underline{\sigma}^{2}>0$, this equation is also called the
Barenblatt equation.
\end{example}

In the following two typical situations, the calculation of $\mathbb{E}
[\varphi(X)]$ is very easy:

\begin{itemize}
\item For each \textbf{convex} function $\varphi$, we have
\[
\mathbb{E}[\varphi(X)]=\frac{1}{\sqrt{2\pi}}\int_{-\infty}^{\infty}%
\varphi(\overline{\sigma}^{2}y)\exp(-\frac{y^{2}}{2 })dy.
\]
Indeed, for each fixed $t\geq0$, it is easy to check that the function
{$u(t,x):=\mathbb{E}[\varphi(x+\sqrt{t}X)]$ is convex in $x$:}
\begin{align*}
u(t,\alpha x+(1-\alpha)y)  &  =\mathbb{E}[{\varphi(\alpha x+(1-\alpha
)y+\sqrt{t}X)]}\\
&  \leq{}{{\alpha}\mathbb{E}[\varphi(x+\sqrt{t}X)]+(1-{\alpha)}\mathbb{E}%
[\varphi(x+\sqrt{t}X)]}\\
&  =\alpha u(t,x)+(1-\alpha)u(t,x).
\end{align*}
It follows that $(\partial_{xx}^{2}u)^{-}\equiv0$ and thus the above $G$-heat
equation becomes {%
\[
\partial_{t}u=\frac{\overline{\sigma}^{2}}{2}\partial_{xx}^{2}u,\  \  \ u|_{t=0}%
=\varphi.\  \
\]
}

\item For each \textbf{concave} function $\varphi$, we have%
\[
\mathbb{E}[\varphi(X)]=\frac{1}{\sqrt{2\pi}}\int_{-\infty}^{\infty}%
\varphi(\underline{\sigma}^{2}y)\exp(-\frac{y^{2}}{2})dy.
\]

In particular,%
\[
\mathbb{E}[X]=\mathbb{E}[-X]=0,\  \  \mathbb{E}[X^{2}]=\overline{\sigma}%
^{2},\  \ -\mathbb{E}[-X^{2}]=\underline{\sigma}^{2}.
\]

\end{itemize}

\subsection{Appendix B: Kolmogorov's criterion in the situation of sublinear
expectation}

\begin{definition}
Let $I$ be a set of indices, $(X_{t})_{t\in I}$ and $(Y_{t})_{t\in I}$ be two
processes indexed by $I$ . We say that $Y$ is a \textbf{ modification}%
\index{Modification}
of $X$ if for all $t\in I$, $X_{t}=Y_{t}$ q.s..
\end{definition}

We now give a Kolmogorov criterion for a process indexed by $\mathbb{R}^{d}$
with $d\in \mathbb{N}$.

\begin{theorem}
\label{ch6t128} Let $p>0$ and $(X_{t})_{t\in \lbrack0,1]^{d}}$ be a process
such that for all $t\in \lbrack0,1]^{d}$, $X_{t}$ belongs to $\mathbb{L}^{p}$ .
Assume that there exist positive constants $c$ and $\varepsilon$ such that
\[
\bar{\mathbb{E}}[|X_{t}-X_{s}|^{p}]\leq c|t-s|^{d+\varepsilon},\  \ s,t\in
\lbrack0,1]^{d}%
\]
Then $X=(X_{t})_{t\in \lbrack0,1]^{d}}$ admits a modification $(\tilde
{X})_{t\in \lbrack0,1]^{d}}$ such that
\[
\bar{\mathbb{E}}\left[  \left(  \sup_{s\neq t}\frac{|\tilde{X}_{t}-\tilde
{X}_{s}|}{|t-s|^{\alpha}}\right)  ^{p}\right]  <\infty,
\]
for each $\alpha \in \lbrack0,\varepsilon/p)$. As a consequence, paths of
$\tilde{X}$ are quasi-surely H\"{o}lder continuous of order $\alpha$ for each
$\alpha<\varepsilon/p$ in the sense that there exists a Borel set $N$ of
$\tilde{c}(N)=0$ such that for all $\omega \in N^{c}$, the map $t\rightarrow
\tilde{X}_{t}(\omega)$ is H\"{o}lder continuous of order $\alpha$ for each
$\alpha<\varepsilon/p$. Moreover, if $X_{t}\in \mathbb{L}_{c}^{p}$ for each
$t$, then we also have $\tilde{X}_{t}\in \mathbb{L}_{c}^{p}$.
\end{theorem}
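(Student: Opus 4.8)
The plan is to adapt the classical Kolmogorov–Chentsov argument to the sublinear expectation setting, working on the dyadic grid and using the moment bound together with the Markov-type inequality furnished by the capacity $\tilde c$. First I would fix $\alpha \in [0,\varepsilon/p)$ and, for each $n$, let $D_n$ denote the dyadic points of $[0,1]^d$ at scale $2^{-n}$, and $D = \bigcup_n D_n$. For a pair of neighbouring dyadic points $s,t \in D_n$ (there are $O(2^{nd})$ such pairs), the hypothesis gives $\bar{\mathbb E}[|X_t - X_s|^p] \le c\,|t-s|^{d+\varepsilon} \le c\,2^{-n(d+\varepsilon)}$ up to a dimensional constant. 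Setting $M_n := \max\{|X_t - X_s| : s,t \in D_n \text{ neighbours}\}$, subadditivity of $\bar{\mathbb E}$ (applied to the finite sum dominating the max) yields $\bar{\mathbb E}[M_n^p] \le C\,2^{nd} \cdot 2^{-n(d+\varepsilon)} = C\,2^{-n\varepsilon}$.

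Next I would run the standard chaining estimate: for $s,t \in D$ with $2^{-(n+1)} \le |t-s| \le 2^{-n}$, one writes the increment $X_t - X_s$ as a telescoping sum along dyadic refinements and bounds $|X_t - X_s| \le C \sum_{k \ge n} M_{k}$, so that
\[
K := \sup_{s \ne t,\, s,t \in D} \frac{|X_t - X_s|}{|t-s|^\alpha} \le C \sum_{k \ge 0} 2^{k\alpha} M_k.
\]
Taking $\bar{\mathbb E}$, using sublinearity and the triangle/Minkowski inequality from Proposition \ref{ppp1} in the form $\|\cdot\|_p = (\bar{\mathbb E}[|\cdot|^p])^{1/p}$, gives $\|K\|_p \le C \sum_{k \ge 0} 2^{k\alpha} \|M_k\|_p \le C \sum_{k\ge0} 2^{k\alpha} 2^{-k\varepsilon/p}$, which converges since $\alpha < \varepsilon/p$. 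Hence $\bar{\mathbb E}[K^p] < \infty$, and in particular $K < \infty$ quasi-surely, i.e. there is a Borel set $N$ with $\tilde c(N) = 0$ off which $X|_D$ is $\alpha$-H\"older.

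On $N^c$ the restriction $t \mapsto X_t(\omega)$ is uniformly continuous on the dense set $D$, so it extends uniquely to a continuous map $\tilde X_\cdot(\omega)$ on $[0,1]^d$; on $N$ I would simply set $\tilde X \equiv 0$. The extended process inherits the same H\"older modulus, giving the displayed bound. To see $\tilde X$ is a modification of $X$, fix $t \in [0,1]^d$: choose dyadic $t_k \to t$; then $X_{t_k} \to \tilde X_t$ off $N$, while the moment hypothesis forces $X_{t_k} \to X_t$ in $\|\cdot\|_p$ (and hence along a further subsequence quasi-surely), so $X_t = \tilde X_t$ q.s. Finally, if each $X_t \in \mathbb L^p_c$, then since $X_{t_k} \to \tilde X_t$ in $\|\cdot\|_p$ and $\mathbb L^p_c$ is closed under this norm, $\tilde X_t \in \mathbb L^p_c$.

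The main obstacle is making sure that every estimate that classically uses countable additivity or Fatou's lemma goes through with only finite additivity of $E_\theta$ and the outer quantities $\bar{\mathbb E}$, $\tilde c$. Concretely, the passage from "$\bar{\mathbb E}[K^p]<\infty$" to "$K<\infty$ $\tilde c$-q.s." needs the Markov inequality $\tilde c(K > \lambda) \le \lambda^{-p}\bar{\mathbb E}[K^p]$, which holds because $\tilde c$ is the upper probability of the family $\mathcal P_e$ and $\bar{\mathbb E}$ dominates every $E_Q$; and the monotone passage $\bar{\mathbb E}[\sum_{k\le m} 2^{k\alpha}M_k] \uparrow$ needs a monotone convergence property of $\bar{\mathbb E}$ along increasing sequences, which one gets on $\hat{\mathcal H}_p$ from the $\|\cdot\|_p$-convergence $\sum_{k\le m} 2^{k\alpha}M_k \to \sum_k 2^{k\alpha}M_k$ established above rather than from abstract monotone convergence. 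I would route all limiting arguments through $\|\cdot\|_p$-convergence precisely to sidestep the lack of a dominated/monotone convergence theorem at the level of the sublinear functional itself.
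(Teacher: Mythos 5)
The paper states Theorem \ref{ch6t128} without any proof --- it is imported as a known result (the Kolmogorov--Chentsov criterion under an upper expectation, established in the Denis--Hu--Peng line of work cited in the bibliography and in [Peng2010]) --- so there is no in-paper argument to compare yours against. Judged on its own terms, your proposal follows the standard and correct route: dyadic chaining, the bound $\bar{\mathbb{E}}[M_n^p]\leq C2^{-n\varepsilon}$ via subadditivity over the $O(2^{nd})$ neighbouring pairs, summability of the chaining series for $\alpha<\varepsilon/p$, the Chebyshev inequality for the capacity $\tilde c=\sup_Q Q$, and extension from the dyadics by uniform continuity. Your closing paragraph correctly isolates the one place where the nonlinear setting demands care, and the cleanest justification there is the one you gesture at but do not quite state: since $\tilde{\mathbb{E}}=\sup_{Q}E_Q$ with each $Q$ $\sigma$-additive, upward monotone convergence survives ($E_Q[K^p]=\lim_m E_Q[K_m^p]\leq B$ for every $Q$, hence $\tilde{\mathbb{E}}[K^p]\leq B$), and countable subadditivity of $\tilde c$ supplies both the Borel--Cantelli step and the ``q.s.\ along a subsequence'' step in your modification argument. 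Two small repairs are needed. First, the theorem allows $0<p<1$, where the Minkowski inequality of Proposition \ref{ppp1} is unavailable; replace $\Vert K\Vert_p\leq C\sum_k 2^{k\alpha}\Vert M_k\Vert_p$ by $\tilde{\mathbb{E}}[K^p]\leq C\sum_k 2^{k\alpha p}\,\tilde{\mathbb{E}}[M_k^p]$ using $|\sum_k a_k|^p\leq\sum_k|a_k|^p$, which converges under the same condition $\alpha p<\varepsilon$. Second, the quantities $M_n$ and $K$ are countable suprema and need not lie in the completion $\hat{\mathcal H}_p$ a priori, so all estimates on them should be run under the Borel-measurable upper expectation $\tilde{\mathbb{E}}$ rather than the $L_{ip}$-expectation; the two agree where both are defined, so this is bookkeeping rather than a gap.
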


\begin{lemma}
\label{le6}Let $p>0$. Assume that there exist positive constants $c$ and
$\varepsilon$ such that
\[
\mathbb{F}_{t,s}[\varphi_{p}]\leq c|t-s|^{1+\varepsilon},\  \ s,t\geq0
\]
where $\varphi_{p}(x_{1},x_{2})=|x_{1}-x_{2}|^{p}$. Then for the stochastic
process $\overline{X}=(\overline{X}_{t})_{t\geq0}$, there exists a continuous
modification $\tilde{X}=\{ \tilde{X}_{t}:t\in \lbrack0,\infty)\}$ of
$\overline{X}$ (i.e. $\tilde{c}(\{ \tilde{X}_{t}\not =\overline{X}_{t}\})=0$,
for each $t\geq0$).
\end{lemma}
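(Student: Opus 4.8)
The plan is to reduce Lemma \ref{le6} to the general Kolmogorov criterion of Theorem \ref{ch6t128} by specializing to dimension $d=1$. First I would recall that the process $\overline{X}=(\overline{X}_t)_{t\geq 0}$ is the canonical process on $\bar\Omega=(\mathbb{R}^d)^{[0,\infty)}$ carrying the sublinear expectation $\mathbb{E}$ constructed in Theorem \ref{Thm8.8}, with finite-dimensional distributions $(\mathbb{F}_{\underline t})_{\underline t\in\mathcal{T}}$; by Lemma \ref{le5} this $\mathbb{E}$ agrees with the upper expectation $\mathbb{\bar{E}}[\cdot]=\sup_{Q\in\mathcal{P}_e}E_Q[\cdot]$ on $L_{ip}(\bar\Omega)$, and $\tilde{c}$, $\mathbb{\tilde{E}}$ are the associated capacity and upper expectation on $\mathcal{B}(\bar\Omega)$. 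The hypothesis $\mathbb{F}_{t,s}[\varphi_p]\leq c|t-s|^{1+\varepsilon}$ is by definition exactly $\bar{\mathbb{E}}[|\overline{X}_t-\overline{X}_s|^p]\leq c|t-s|^{1+\varepsilon}$, i.e. the moment bound of Theorem \ref{ch6t128} with spatial dimension $d=1$ and the same $\varepsilon$.

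The key steps, in order, are: (1) verify the integrability requirement, namely that $\overline{X}_t$ belongs to the relevant $\mathbb{L}^p$ space for each $t$; this follows because $|\overline{X}_t-\overline{X}_0|^p$ has finite upper expectation by the hypothesis applied with $s=0$, together with $\overline{X}_0$ being a constant (or at worst in $L_{ip}$). (2) Apply Theorem \ref{ch6t128} verbatim with $d=1$: it produces a modification $\tilde{X}=(\tilde{X}_t)_{t\in[0,1]}$ whose paths are quasi-surely H\"older continuous of every order $\alpha<\varepsilon/p$, in particular continuous, and with $\tilde{c}(\{\tilde{X}_t\neq\overline{X}_t\})=0$ for each $t$. (3) Patch the local result on $[0,1]$ into a global one on $[0,\infty)$: run the same argument on each interval $[n,n+1]$, $n\in\mathbb{N}$, obtaining continuous modifications $\tilde{X}^{(n)}$; since on overlaps the modifications agree with $\overline{X}$ quasi-surely at the (countably many) dyadic times and all are continuous, they agree quasi-surely on the overlaps, so they glue to a single continuous modification $\tilde{X}=\{\tilde{X}_t:t\geq 0\}$ with $\tilde{c}(\{\tilde{X}_t\neq\overline{X}_t\})=0$ for every $t\geq 0$. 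Here one uses countable subadditivity of $\tilde{c}$ to control the union of the exceptional sets.

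I expect the routine part to be the verification of the integrability and the reindexing $[0,1]\leftrightarrow[0,1]^d$ with $d=1$; the only genuine point requiring care is step (3), the globalization and gluing across the countably many unit intervals, where one must be careful that the capacity-null exceptional sets combine into a single null set (using $\tilde{c}(\bigcup_n N_n)=0$ when each $\tilde{c}(N_n)=0$) and that continuity of each piece forces agreement on overlaps off a null set rather than merely at a countable dense set. Since Theorem \ref{ch6t128} already does all the hard analytic work — the chaining/dyadic argument producing the H\"older modulus — this lemma is essentially a corollary, and no new estimate beyond the hypothesis is needed.
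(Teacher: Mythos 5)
Your proposal is correct and follows essentially the same route as the paper: translate the hypothesis $\mathbb{F}_{t,s}[\varphi_p]\leq c|t-s|^{1+\varepsilon}$ into the moment bound $\bar{\mathbb{E}}[|\overline{X}_t-\overline{X}_s|^p]\leq c|t-s|^{1+\varepsilon}$ via the identification of $\bar{\mathbb{E}}$ with $\tilde{\mathbb{E}}$ on $L_{ip}(\bar\Omega)$, and then invoke Theorem \ref{ch6t128}. You additionally spell out the integrability check and the gluing of modifications from $[0,1]$ to $[0,\infty)$ using countable subadditivity of $\tilde c$, a detail the paper passes over silently.
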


\begin{proof}
We have $\mathbb{\bar{E}}=\mathbb{\tilde{E}}$ on
$L_{ip}(\bar{\Omega})$. On the other hand, we have
\[
\mathbb{\tilde{E}}[|\overline{X}_{t}-\overline{X}_{s}|^{p}]=\mathbb{\bar{E}%
}[|\overline{X}_{t}-\overline{X}_{s}|^{p}]=c|t-s|^{1+\varepsilon},\forall
s,t\in \lbrack0,\infty),
\]
where $d$ is a constant depending only on $G$. By Theorem \ref{ch6t128}, there
exists a continuous modification $\tilde{B}$ of $\overline{X}$. Since
$\tilde{c}(\{ \overline{X}_{0}\not =0\})=0$, we can set $\tilde{B}_{0}=0$. The
proof is complete.
\end{proof}

\subsection{Appendix C: Tightness of $\mathcal{P}_{\! \!e}$}

For each $Q\in \mathcal{P}_{\! \!e}$, let $Q\circ \tilde{B}^{-1}$ denote the
probability measure on $(\Omega,\mathcal{B}(\Omega))$ induced by $\tilde{B}$
with respect to $Q$. We denote $\mathcal{P}_{1}=\{Q\circ \tilde{B}^{-1}%
:Q\in \mathcal{P}_{\! \!e}\}$. By Lemma \ref{le6}, we get
\[
\mathbb{\tilde{E}}[|\overline{X}_{t}-\overline{X}_{s}|^{p}%
]=c|t-s|^{1+\varepsilon},\forall s,t\in \lbrack0,\infty).
\]
Applying the well-known result of moment criterion for tightness of the above
Kolmogorov-Chentrov's type, we conclude that $\mathcal{P}_{1}$ is tight. We
denote by $\mathcal{P}=\overline{\mathcal{P}}_{1}$ the closure of
$\mathcal{P}_{1}$ under the topology of weak convergence, then $\mathcal{P}$
is weakly compact.

Now, we give the representation of $G$-expectation.

\begin{theorem}
\label{Gt34} For each continuous monotonic and sublinear function
$G:\mathbb{S}(d)\longmapsto \mathbb{R}$, let $\mathbb{\hat{E}}_{G}$ be the
corresponding $G$-expectation on $(\Omega,L_{ip}(\Omega))$. Then there exists
a weakly compact family of probability measures $\mathcal{P}$ on
$(\Omega,\mathcal{B}(\Omega))$ such that%

\[
\mathbb{\hat{E}}_{G}[X]=\max_{P\in \mathcal{P}}E_{P}[X],\quad \forall X\in
L_{ip}(\Omega).
\]

\end{theorem}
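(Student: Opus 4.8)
The plan is to assemble Theorem \ref{Gt34} from the pieces already developed in the paper, treating it essentially as a specialization of the general construction (Theorem \ref{Thm8.8}) together with the representation and tightness lemmas. First I would invoke Proposition \ref{Prop-Gnorm copy(1)}: given the continuous, monotonic, sublinear $G$, there is a $G$-normal distributed random vector, and via Appendix A (the $G$-heat equation (\ref{ee03}) and Proposition \ref{gG-P1 copy(1)}) the associated consistent family of finite dimensional distributions $(\mathbb{F}_{\underline{t}})_{\underline{t}\in\mathcal{T}}$ of a $G$-Brownian motion is well defined and sublinear. Applying Theorem \ref{Thm8.8} with $\Omega = (\mathbb{R}^d)^{[0,\infty)}$, one obtains the canonical sublinear expectation $\mathbb{\bar E} = \mathbb{\hat E}_G$ on $L_{ip}(\Omega)$.

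Next I would apply the representation results of Section 3: by Lemma \ref{le5} there is a family $\mathcal{P}_e$ of (countably additive) probability measures on $(\bar\Omega,\sigma(\bar\Omega))$ with $\mathbb{\bar E}[X] = \max_{Q\in\mathcal{P}_e} E_Q[X]$ for $X\in L_{ip}(\bar\Omega)$, where $\bar\Omega=(\mathbb{R}^d)^{[0,\infty)}$. The point is then to transport this to $\Omega = C([0,\infty);\mathbb{R}^d)$. For the $G$-heat-equation moments one checks $\mathbb{\bar E}[|\bar X_t - \bar X_s|^{p}] \le c|t-s|^{1+\varepsilon}$ (taking $p=4$, $\varepsilon = 1$ as in the Example with $\mathbb{E}[|X_t - X_s|^4]\le d|t-s|^2$), so Lemma \ref{le6} (Kolmogorov's criterion, Theorem \ref{ch6t128}) yields a continuous modification $\tilde B$. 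Then, as in Appendix C, push each $Q\in\mathcal{P}_e$ forward by $\tilde B$ to get $\mathcal{P}_1 = \{Q\circ\tilde B^{-1}: Q\in\mathcal{P}_e\}$ on $(\Omega,\mathcal{B}(\Omega))$; the uniform moment bound gives Kolmogorov–Chentsov tightness of $\mathcal{P}_1$, and I set $\mathcal{P} = \overline{\mathcal{P}_1}$, weakly compact.

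Finally I would verify the identity $\mathbb{\hat E}_G[X] = \max_{P\in\mathcal{P}} E_P[X]$ for $X\in L_{ip}(\Omega)$. Since $\tilde B$ is a modification of $\bar X$, for any cylinder functional $X = \varphi(\omega_{t_1},\dots,\omega_{t_n})\in L_{ip}(\Omega)$ we have $E_{Q\circ\tilde B^{-1}}[X] = E_Q[\varphi(\tilde B_{t_1},\dots,\tilde B_{t_n})] = E_Q[\varphi(\bar X_{t_1},\dots,\bar X_{t_n})]$ (the two agree $Q$-a.s.), so $\sup_{P\in\mathcal{P}_1} E_P[X] = \sup_{Q\in\mathcal{P}_e} E_Q[\varphi(\bar X)] = \mathbb{\bar E}[\varphi(\bar X)] = \mathbb{\hat E}_G[X]$; passing to the weak closure does not change the supremum over the bounded continuous $X$, and weak compactness upgrades $\sup$ to $\max$.

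I expect the main obstacle to be the passage from the path space $(\mathbb{R}^d)^{[0,\infty)}$ to the continuous path space $C([0,\infty);\mathbb{R}^d)$ in a way that preserves the representation exactly: one must be careful that $\sigma(L_{ip}(\Omega))$ agrees with the Borel $\sigma$-algebra of the uniform topology, that the modification $\tilde B$ is genuinely measurable so that $Q\circ\tilde B^{-1}$ is well defined, and that taking the weak closure $\overline{\mathcal{P}_1}$ does not lose the $\max$ (this is where weak compactness via Kolmogorov–Chentsov tightness is essential). The remaining steps are either cited verbatim from the earlier sections or are the standard Kolmogorov continuity/tightness estimates, so the bookkeeping of $\sigma$-algebras and the tightness argument are the only genuinely delicate points.
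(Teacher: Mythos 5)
Your route is the paper's route: Lemma \ref{le5} for the finitely-additive-to-$\sigma$-additive representation over $\mathcal{P}_e$ on $(\mathbb{R}^d)^{[0,\infty)}$, Lemma \ref{le6} (via Theorem \ref{ch6t128}) for the continuous modification $\tilde B$, the pushforward family $\mathcal{P}_1=\{Q\circ\tilde B^{-1}:Q\in\mathcal{P}_e\}$, Kolmogorov--Chentsov tightness, and $\mathcal{P}=\overline{\mathcal{P}}_1$. Up to and including the identity $\mathbb{\hat E}_G[X]=\max_{P\in\mathcal{P}_1}E_P[X]$ your argument matches the paper, and the ``max'' is indeed already secured at this stage because Lemma \ref{le5} attains its maximum inside $\mathcal{P}_e$, hence inside $\mathcal{P}_1\subset\mathcal{P}$.

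The genuine gap is in your last step, where you write that ``passing to the weak closure does not change the supremum over the bounded continuous $X$.'' The elements of $L_{ip}(\Omega)$ are of the form $\varphi(\omega_{t_1},\dots,\omega_{t_n})$ with $\varphi\in C_{l.Lip}$, which has polynomial growth and is in general \emph{unbounded}; weak convergence only controls integrals of bounded continuous functionals, so for a weak limit $P$ of a sequence in $\mathcal{P}_1$ you cannot directly conclude $E_P[X]\leq\sup_{P'\in\mathcal{P}_1}E_{P'}[X]$. This is exactly the point the paper closes with a truncation: setting $X_N=(X\wedge N)\vee(-N)$, the argument of Lemma \ref{le3} gives $\mathbb{\hat E}_G[\,|X-X_N|\,]\downarrow0$ as $N\to\infty$, uniformly over the representing measures, and $X_N$ is bounded continuous so the definition of weak convergence applies to it; combining the two yields $\sup_{P\in\mathcal{P}}E_P[X]=\sup_{P\in\mathcal{P}_1}E_P[X]$ for every $X\in L_{ip}(\Omega)$. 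Without this uniform-integrability step your proof only covers the bounded elements of $L_{ip}(\Omega)$, not the whole space as the theorem asserts. The remaining delicate points you flag (measurability of $\tilde B$ so that $Q\circ\tilde B^{-1}$ is defined, agreement of the cylinder and Borel $\sigma$-algebras) are real but are handled exactly as you suggest.
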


\begin{proof}
By Lemma \ref{le5} and Lemma \ref{le6}, we have
\[
\mathbb{\hat{E}}_{G}[X]=\max_{P\in \mathcal{P}_{1}}E_{P}[X],\quad \forall X\in
L_{ip}(\Omega).
\]
For each $X\in L_{ip}(\Omega)$, by Lemma \ref{le3}, we get $\mathbb{\hat{E}%
}_{G}[|X-(X\wedge N)\vee(-N)|]\downarrow0$ as $N\rightarrow \infty$. Noting
also that $\mathcal{P}=\overline{\mathcal{P}}_{1}$, then by the definition of
weak convergence, we get the result.
\end{proof}

\subsection{Appendix D: $G$-Capacity and paths of $G$-Brownian motion}

According to Theorem \ref{Gt34}, we obtain a weakly compact family of
probability measures $\mathcal{P}$ on $(\Omega,\mathcal{B}(\Omega))$ to
represent the sublinear expectation $\mathbb{E}[\cdot]$. For this
$\mathcal{P}$, we define the associated $G$-capacity:
\[
\hat{c}(A):=\sup_{P\in \mathcal{P}}P(A),\quad A\in \mathcal{B}(\Omega),
\]
and upper expectation for each $X\in L^{0}(\Omega)$ which makes the following
definition meaningful,
\[
\mathbb{\hat{E}}[X]:=\sup_{P\in \mathcal{P}}E_{P}[X].
\]
We have $\mathbb{\hat{E}}=\mathbb{\hat{E}}_{G}$ on $L_{ip}(\Omega)$.
Thus the $\mathbb{\hat{E}}_{G}[|\cdot|]$-completion and the
$\mathbb{\hat{E}}[|\cdot|]$-completion of $L_{ip}(\Omega)$ are the
same.

For each $T>0$, we also denote by $\Omega_{T}=C_{0}^{d}([0,T])$ equipped with
the distance
\[
\rho(\omega^{1},\omega^{2})=\left \Vert \omega^{1}-\omega^{2} \right \Vert
_{C_{0}^{d}([0,T])}:=\max_{0\leq t\leq T}|\omega^{1}_{t}-\omega^{2}_{t}|.
\]

We now prove that $L_{G}^{1}(\Omega)=\mathbb{L}_{c}^{1}$. First, we need the
following classical approximation Lemma (see e.g. the well-known approximation
of Barlow and Perkins for SDE and Lepeltier and San Martin to prove the
existence of BSDE with continuous coefficients).

\begin{proposition}
\label{pr11} For each $X\in C_{b}(\Omega)$ and $\varepsilon>0$, there exists a
$Y\in L_{ip}(\Omega)$ such that $\mathbb{\bar{E}}[|Y-X|]\leq \varepsilon$.
\end{proposition}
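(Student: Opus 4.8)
The plan is to approximate $X \in C_b(\Omega)$ by cylinder functions and then repair the failure of these approximants to be Lipschitz. First I would exploit the density of cylinder functions in $C_b(\Omega)$ with respect to the uniform norm on each finite time horizon: since $\Omega_T = C_0^d([0,T])$ is a Polish space and the canonical process $(\bar B_t)_{0 \le t \le T}$ together with a finite partition of $[0,T]$ generates enough structure, a standard argument (Stone–Weierstrass on compact sets, combined with a tightness/uniform-integrability estimate coming from the Kolmogorov-type bound $\mathbb{\bar E}[|\bar X_t - \bar X_s|^p] = c|t-s|^{1+\varepsilon}$ established in Lemma \ref{le6}) produces, for given $\eta > 0$, a function of the form $\psi(\bar B_{t_1}, \dots, \bar B_{t_n})$ with $\psi \in C_b(\mathbb{R}^{n\times d})$ and $\mathbb{\bar E}[|X - \psi(\bar B_{t_1},\dots,\bar B_{t_n})|] \le \eta/2$.

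Next I would mollify: given a bounded continuous $\psi$ on $\mathbb{R}^{n\times d}$, convolve with a smooth compactly supported kernel and truncate, obtaining $\psi_\delta \in C_{b.Lip}(\mathbb{R}^{n\times d}) \subset C_{l.Lip}(\mathbb{R}^{n\times d})$ with $\|\psi - \psi_\delta\|_\infty \le \eta/2$ (here I use uniform continuity of $\psi$ on the relevant compact set, together with the fact that the tails are controlled because $\psi$ is bounded). Then $Y := \psi_\delta(\bar B_{t_1}, \dots, \bar B_{t_n}) \in L_{ip}(\Omega)$ by definition of the cylinder class, and by monotonicity and sub-additivity of $\mathbb{\bar E}$,
\[
\mathbb{\bar E}[|Y - X|] \le \mathbb{\bar E}[|Y - \psi(\bar B_{t_1},\dots,\bar B_{t_n})|] + \mathbb{\bar E}[|\psi(\bar B_{t_1},\dots,\bar B_{t_n}) - X|] \le \eta/2 + \eta/2 = \eta.
\]
Setting $\eta = \varepsilon$ completes the argument.

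The main obstacle I expect is the first step: showing that cylinder functions are uniformly dense in $C_b(\Omega)$ where $\Omega$ carries the topology of local uniform convergence. Strictly, $C_b(\Omega)$-density of cylinder functions in sup norm is false on a non-compact space, so the correct reading is density in the $\mathbb{\bar E}[|\cdot|]$-seminorm, which is weaker and only needs density on the sets carrying most of the mass of every $Q \in \mathcal{P}$. Here the weak compactness of $\mathcal{P}$ (from Appendix C) gives a single compact set $K_\kappa \subset \Omega$ with $\sup_{Q \in \mathcal{P}} Q(K_\kappa^c) \le \kappa$; on $K_\kappa$ (a compact subset of $C_0^d([0,T])$, so equicontinuous paths) a finite partition of $[0,T]$ of small mesh makes $\omega \mapsto (\omega_{t_1},\dots,\omega_{t_n})$ approximate the whole path uniformly, and Stone–Weierstrass then finishes. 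I would assemble this estimate carefully, using boundedness of $X$ to absorb the contribution from $K_\kappa^c$, and let $\kappa \to 0$. The rest is routine mollification.
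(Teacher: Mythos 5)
Your overall architecture is the right one and essentially mirrors the paper's: use tightness (weak compactness of $\mathcal{P}$) to pass to a compact set $K\subset\Omega$ of nearly full capacity, use equicontinuity of paths in $K$ to reduce to finitely many time coordinates, and absorb the contribution of $K^{c}$ by boundedness. The paper just performs the two regularizations in the opposite order: it first replaces $X$ by a function $\bar X\in C_{b}(\Omega_{T})$ that is \emph{globally Lipschitz in the path} for the sup distance on $C_{0}^{d}([0,T])$, and then composes with the piecewise linear interpolation $\omega^{(n)}$ at grid points; the Lipschitz constant transfers automatically to the finite-dimensional function, so no further smoothing is needed. Your order (discretize first, then regularize the finite-dimensional $\psi$) is workable, and indeed if you take the Stone--Weierstrass approximants to be polynomials truncated at level $M=\sup|X|$, they already lie in $C_{l.Lip}(\mathbb{R}^{n\times d})$ and the mollification step could be dispensed with entirely.

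There is, however, one step that fails as written: the claim that mollification produces $\psi_{\delta}\in C_{b.Lip}(\mathbb{R}^{n\times d})$ with $\|\psi-\psi_{\delta}\|_{\infty}\leq\eta/2$. Uniform sup-norm approximation by convolution requires $\psi$ to be \emph{uniformly} continuous on all of $\mathbb{R}^{n\times d}$, which a general bounded continuous function is not; your parenthetical justification (uniform continuity on a compact set plus boundedness of the tails) only yields $|\psi-\psi_{\delta}|\leq 2\|\psi\|_{\infty}$ off that compact set, which is not small. The repair is the same device you already use in the first step: estimate $\mathbb{\bar E}[|\psi_{\delta}(\bar B_{t_{1}},\dots,\bar B_{t_{n}})-\psi(\bar B_{t_{1}},\dots,\bar B_{t_{n}})|]$ by splitting over $K$ and $K^{c}$, getting the uniform mollification bound only on the compact image of $K$ under $(\bar B_{t_{1}},\dots,\bar B_{t_{n}})$ and paying $2\|\psi\|_{\infty}\,\tilde c(K^{c})$ on the complement (note $\|\psi_{\delta}\|_{\infty}\leq\|\psi\|_{\infty}$, so this is controlled). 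With that patch, and with the implicit truncation of the Stone--Weierstrass approximant made explicit so that $\psi$ is genuinely bounded, your argument goes through and is equivalent in substance to the paper's.
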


\begin{proof}
We denote $M=\sup_{\omega \in \Omega}|X(\omega)|$. We can find
$\mu>0$, $T>0$ and $\bar{X}\in C_{b}(\Omega _{T})$ such that
$\mathbb{\hat{E}}[|X-\bar{X}|]<\varepsilon/3$, $\sup _{\omega \in
\Omega}|\bar{X}(\omega)|\leq M$ and
\[
|\bar{X}(\omega)-\bar{X}(\omega^{\prime})|\leq \mu \left \Vert \omega
-\omega^{\prime}\right \Vert _{C_{0}^{d}([0,T])},\  \  \forall \omega
,\omega^{\prime}\in \Omega.
\]
Now for each positive integer $n$, we introduce a mapping $\omega^{(n)}%
(\omega):\Omega \mapsto \Omega$:
\[
\omega^{(n)}(\omega)(t)=\sum_{k=0}^{n-1}\frac{\mathbf{1}_{[t_{k}^{n}%
,t_{k+1}^{n})}(t)}{t_{k+1}^{n}-t_{k}^{n}}[(t_{k+1}^{n}-t)\omega(t_{k}%
^{n})+(t-t_{k}^{n})\omega(t_{k+1}^{n})]+\mathbf{1}_{[T,\infty)}(t)\omega(t),\
\]
where $t_{k}^{n}=\frac{kT}{n},\ k=0,1,\cdots,n$. We set $\bar{X}^{(n)}%
(\omega):=\bar{X}(\omega^{(n)}(\omega))$, then
\begin{align*}
|\bar{X}^{(n)}(\omega)-\bar{X}^{(n)}(\omega^{\prime})|  &  \leq \mu \sup
_{t\in \lbrack0,T]}|\omega^{(n)}(\omega)(t)-\omega^{(n)}(\omega^{\prime})(t)|\\
&  =\mu \sup_{k\in \lbrack0,\cdots,n]}|\omega(t_{k}^{n})-\omega^{\prime}%
(t_{k}^{n})|.
\end{align*}
We now choose a compact subset $K\subset \Omega$ such that $\mathbb{\hat{E}%
}[\mathbf{1}_{K^{C}}]\leq \varepsilon/6M$. Since $\sup_{\omega \in K}\sup
_{t\in \lbrack0,T]}|\omega(t)-\omega^{(n)}(\omega)(t)|\rightarrow0$, as
$n\rightarrow \infty$, we then can choose a sufficiently large $n_{0}$ such
that
\begin{align*}
\sup_{\omega \in K}|\bar{X}(\omega)-\bar{X}^{(n_{0})}(\omega)|  &
=\sup_{\omega \in K}|\bar{X}(\omega)-\bar{X}(\omega^{(n_{0})}(\omega))|\\
&  \leq \mu \sup_{\omega \in K}\sup_{t\in \lbrack0,T]}|\omega(t)-\omega^{(n_{0}%
)}(\omega)(t)|\\
&  <\varepsilon/3.
\end{align*}
Set $Y:=\bar{X}^{(n_{0})}$, it follows that
\begin{align*}
\mathbb{\bar{E}}[|X-Y|]  &  \leq \mathbb{\bar{E}}[|X-\bar{X}|]+\mathbb{\bar{E}%
}[|\bar{X}-\bar{X}^{(n_{0})}|]\\
&  \leq \mathbb{\bar{E}}[|X-\bar{X}|]+\mathbb{\bar{E}}[\mathbf{1}_{K}|\bar
{X}-\bar{X}^{(n_{0})}|]+2M\mathbb{\bar{E}}[\mathbf{1}_{K^{C}}]\\
&  <\varepsilon.
\end{align*}
The proof is complete.
\end{proof}

\  \  \

By Proposition \ref{pr11}, we can easily get $L_{G}^{1}(\Omega)=\mathbb{L}%
_{c}^{1}$. Furthermore, we can get $L_{G}^{p}(\Omega)=\mathbb{L}_{c}^{p}$,
$\forall p>0$.

Thus, we obtain a pathwise description of $L_{G}^{p}(\Omega)$ for each $p>0$:
\[
L_{G}^{p}(\Omega)=\{X\in L^{0}(\Omega):X\  \text{is quasi-continuous and }%
\lim_{n\rightarrow \infty}\hat{\mathbb{E}}[|X|^{p}I_{\{|X|>n\}}]=0\}.
\]
Furthermore, $\mathbb{\hat{E}}_{G}[X]=\hat{\mathbb{E}}[X]$, for each $X\in
L_{G}^{1}(\Omega)$.\newline

\end{document}